\documentclass[reqno]{amsart}
\usepackage{amsmath}
\usepackage{amssymb}
\usepackage{hyperref}
\usepackage{soul}
\hypersetup{ colorlinks = true, urlcolor = blue, linkcolor = blue, citecolor = red }
\usepackage{xcolor}
%for reference in enumerate
\usepackage{enumitem}
\usepackage{xparse}
\let\realItem\item % save a copy of the original item
\makeatletter
\NewDocumentCommand\myItem{ o }{%
   \IfNoValueTF{#1}%
      {\realItem}% add an item
      {\realItem[#1]\def\@currentlabel{#1}}% add an item and update label
}
\makeatother

\usepackage{enumitem}    
\setlist[enumerate]{
    before=\let\item\myItem,       % use \myItem in enumerate
    label=\textnormal{(\arabic*)}, % format the label
    widest=(2')                    % set the widest label
}
\usepackage{esint}
\makeatletter
\def\namedlabel#1#2{\begingroup
	#2%
	\def\@currentlabel{#2}%
	\phantomsection\label{#1}\endgroup
}
%for item refer
\usepackage{varwidth}
\usepackage{tasks}
%for centering enumerate items
\pdfstringdefDisableCommands{\def\eqref#1{(\ref{#1})}}%for \subsection{\eqref}
%%%%%%%%%%%%%%%%%%%%%%%%%%%%%%%%%%%%%%%%%%%%%%%%%%%%%%%%%%%%%%%%%%%%%%%
\DeclareMathOperator{\dv}{div}

\newcommand{\RR}{\mathbb{R}}
\newcommand{\mA}{\mathcal{A}}
\newcommand{\Om}{\Omega}
\newcommand{\na}{\nabla}

\newcommand{\La}{\Lambda}
\newcommand{\al}{\alpha}

\newcommand{\ep}{\epsilon}

\newcommand{\la}{\lambda}
\newcommand{\sig}{\sigma}
\newcommand{\cv}{\kappa}
\newcommand{\law}{\lambda_w}
\newcommand{\lav}{\lambda_v}
\newcommand{\data}{\mathit{data}}

\newcommand{\mQ}{U}
%%%%%%%%%%%%%%%%%%%%%%%%%%%%%%%%%%%%%%%%%%%%%%%%%%%%%%%%%%%%%%%%%%%%%%%%%%%%%%%%%%%%%%%%%%%%%%%%%%%%%%%%%%%%%%%%%%%%%%%%%%%%%%%%%%%%%%%%%%%%%%%%

%%%%%%%%%%%%%%%%%%%%%%%%%%%%%%%%%%%%%%%%%%%%%%%%%%%%%%%%%%%%%%%%%%%%%%%%%%%%%%%%

\theoremstyle{plain}
\newtheorem{theorem}{Theorem}[section]
\newtheorem{lemma}[theorem]{Lemma}

\newtheorem{definition}[theorem]{Definition}

\newtheorem{remark}[theorem]{Remark}
\def\Xint#1{\mathchoice
	{\XXint\displaystyle\textstyle{#1}}%
	{\XXint\textstyle\scriptstyle{#1}}%
	{\XXint\scriptstyle\scriptscriptstyle{#1}}%
	{\XXint\scriptstyle\scriptscriptstyle{#1}}%
	\!\int}
\def\XXint#1#2#3{{\setbox0=\hbox{$#1{#2#3}{\int}$}
		\vcenter{\hbox{$#2#3$}}\kern-.5\wd0}}

\def\Yint#1{\mathchoice
	{\YYint\displaystyle\textstyle{#1}}%
	{\YYint\textstyle\scriptstyle{#1}}%
	{\YYint\scriptstyle\scriptscriptstyle{#1}}%
	{\YYint\scriptscriptstyle\scriptscriptstyle{#1}}%
	\!\iint}
\def\YYint#1#2#3{{\setbox0=\hbox{$#1{#2#3}{\iint}$}
		\vcenter{\hbox{$#2#3$}}\kern-.51\wd0}}
\def\longdash{{-}\mkern-3.5mu{-}} 
% consider using "\mkern-7.5mu" if esint package is loaded

\def\fiint{\Yint\longdash}

\def\Xint#1{\mathchoice
	{\XXint\displaystyle\textstyle{#1}}%
	{\XXint\textstyle\scriptstyle{#1}}%
	{\XXint\scriptstyle\scriptscriptstyle{#1}}%
	{\XXint\scriptscriptstyle\scriptscriptstyle{#1}}%
	\!\int}
\def\XXint#1#2#3{{\setbox0=\hbox{$#1{#2#3}{\int}$ }
		\vcenter{\hbox{$#2#3$ }}\kern-.6\wd0}}

\def\dashint{\Xint-}

\DeclareMathOperator{\diam}{diam}

%%%%%%%%%%%%%%%%%%%%%%%%%%%%%%%%%%
\usepackage{nameref}
\makeatletter
\let\orgdescriptionlabel\descriptionlabel
\renewcommand*{\descriptionlabel}[1]{%
	\let\orglabel\label
	\let\label\@gobble
	\phantomsection
	\edef\@currentlabel{#1}%
	\let\label\orglabel
	\orgdescriptionlabel{#1}%
}
\makeatother
\numberwithin{equation}{section}
%%%%%%%%%%%%%%%%%%%%%%%%%%%%%%%%%%%%%%%%%%%%%%%%%%%%%%%%%%%%%%%%%%%%%%%%%%%%%%%%%%%%%%%%%%

%%%%%%%%%%%%%%%%%%%%%%%%%%%%%%%%%%%%%%%%%%%%%%%%%%%%%%%%%%%%%%%%%%%%%%%%%%%%%%%%%%%%%%%%%%%%%%%%%%%%%%%%%%%%%%%%%%%%%%
\def\Xint#1{\mathchoice
    {\XXint\displaystyle\textstyle{#1}}%
    {\XXint\textstyle\scriptstyle{#1}}%
    {\XXint\scriptstyle\scriptscriptstyle{#1}}%
    {\XXint\scriptscriptstyle\scriptscriptstyle{#1}}%
    \!\int}
\def\XXint#1#2#3{\setbox0=\hbox{$#1{#2#3}{\int}$}
    \vcenter{\hbox{$#2#3$}}\kern-0.5\wd0}
\def\fint{\Xint-}
\def\dashint{\Xint{\raise4pt\hbox to7pt{\hrulefill}}}

\def\XXiint#1#2#3{\setbox0=\hbox{$#1{#2#3}{\iint}$}
    \vcenter{\hbox{$#2#3$}}\kern-0.5\wd0}

%%%%%%%%%%%%%%%%%%%%%%%%%%%%%%%%%%%%%%%%%%%%%%%%%%%%%%%%%%%%%%%%%%%%%%%%%%%%%%%%%%%%%%%%%%%%%%%%%%%%%%%%%%%%%%%%%%%%%%%%%%%%%%%%%%%%%%%%%%%%%%%%%%%%%%%%%%%

\begin{document}
	
\title[Higher integrability for double-phase systems]{Gradient higher integrability for singular parabolic double-phase systems}

\author{Wontae Kim}
\address[Wontae Kim]{Department of Mathematics, Aalto University, P.O. BOX 11100, 00076 Aalto, Finland}
\email{wontae.kim@aalto.fi}

\author{Lauri Särkiö}
\address[Lauri Särkiö]{Department of Mathematics, Aalto University, P.O. BOX 11100, 00076 Aalto, Finland}
\email[Corresponding author]{lauri.sarkio@aalto.fi}

\everymath{\displaystyle}

\makeatletter
\@namedef{subjclassname@2020}{\textup{2020} Mathematics Subject Classification}
\makeatother

\begin{abstract}
We prove a local higher integrability result for the gradient of a weak solution to parabolic double-phase systems of $p$-Laplace type when $\tfrac{2n}{n+2}< p\le2$. The result is based on a reverse Hölder inequality in intrinsic cylinders combining $p$-intrinsic and $(p,q)$-intrinsic geometries. A singular scaling deficits affects the range of $q$.
\end{abstract}

\keywords{Parabolic double-phase systems, parabolic $p$-Laplace systems, gradient estimates}
\subjclass[2020]{35D30, 35K55, 35K65}
\maketitle
%\tableofcontents

\section{Introduction}
This paper discusses the local higher integrability of the spatial gradient of weak solutions $u=u(z)=u(x,t)$ to parabolic double-phase systems with the prototype 
\[
    u_t-\dv(|\na u|^{p-2}\na u+a(z)|\na u|^{q-2}\na u)=-\dv(|F|^{p-2}F+a(z)|F|^{q-2}F)
\]
in $\Omega_T=\Omega\times(0,T)$, where $\Omega$ is a bounded domain in $\mathbb{R}^n$, $n\geq 2$, and $T>0$. The coefficient function $a\in C^{\alpha,\alpha/2}(\Omega_T)$ is non-negative and Hölder continuous. The higher integrability result in Theorem~\ref{main_theorem} was obtained when $p\ge2 $ in \cite{KKM} and here we extend this result to singular the parameter range. More precisely, in this paper we assume that 
\begin{equation} \label{range_pq}
\frac{2n}{n+2}< p \leq 2, \quad p<q\le p+\frac{\al\mu}{n+2},\quad \mu = \frac{p(n+2)-2n}{2}.    
\end{equation}
When $p=\tfrac{2n}{n+2}$ we have $\mu = 0$, while at $p=2$ the range of $q$ is the same as in \cite{KKM}. Note that $\tfrac{\mu}{p}$ is the usual scaling deficit appearing in singular $p$-parabolic problems, cf. \cite[Section VIII]{MR1230384}. An upper bound for $q$ in terms of $p, \al$ and $n$ appears naturally in regularity properties of double-phase problems. Otherwise the solution may not be regular already in the elliptic case, see \cite{MR2058167}.

The method for showing the higher integrability result in this paper originates from \cite{MR1749438} where the result was shown for parabolic $p$-Laplace systems. There a reverse Hölder inequality was shown in $p$-intrinsic cylinders as in \eqref{def_Q_cylinder}. See also \cite{MR2779582} for the gradient higher integrability of $p(x,t)$-Laplace systems. On the other hand, in \cite{MR4302665} the same result was shown for the Orlicz setting including parabolic $(p,q)$-Laplace problems (corresponding to $a(z)=a_0$ for some constant $a_0>0$) in $(p,q)$-intrinsic cylinders. For the double-phase model $\inf a(z)$ may be zero and a priori neither the $p$-term nor the $q$-term dominates. Instead, the behaviour of the system varies locally between two distinct phases based on which of the terms is dominating. To incorporate this into the argument, we divide into cases at every point $z_0$ by comparing $a(z_0)$ to the level of the gradient. If $a(z_0)$ is sufficiently small, we show a reverse Hölder inequality in a $p$-intrinsic cylinder. In the complementary case, it follows that $a(z)$ is comparable to $a(z_0)$ in a sufficiently large neighborhood of $z_0$ and the reverse Hölder inequality can be shown in a $(p,q)$-intrinsic cylinder.   

To construct the intrinsic cylinders, we use a stopping time argument to find a $p$-intrinsic cylinder at every point in a suitable upper level set. Moreover, we obtain a decay estimate for the radius of a $p$-intrinsic cylinder in terms of the level. This estimate, stated in Lemma~\ref{lemma_decay}, gives the comparability of $a(z)$ around $(p,q)$-intrinsic cylinders, see the property \ref{q2}. Lemma~\ref{lemma_decay} is also used in the $p$-intrinsic case to transform terms involving $q$ into terms of a $p$-Laplace system, for example in the proof of Lemma~\ref{sec5:lem:2}. This argument gives the range of $q$ in \eqref{range_pq}, see Remark~\ref{rmk}. Note that \eqref{range_pq} allows for the situation that $q>2$ while $p<2$. However, this case does not have to be considered separately and the division to $p$- and $(p,q)$-intrinsic cylinders is sufficient. 

Stationary double-phase problems have been studied extensively in \cite{MR3348922,MR3294408,MR3447716,MR3985927,DM}. Note that the double-phase model in these papers is not included in the $(p,q)$-problems studied for instance in \cite{MR1094446}.
For parabolic double-phase problems existence has been studied in \cite{MR3532237} and \cite{KKS} while many regularity questions remain open.

\section{Notation and main result}
\subsection{Notation}
We denote a point in $\RR^{n+1}$ as $z=(x,t)$, where $x\in \RR^n$ and $t\in \RR$.
A ball with center $x_0\in\RR^n$ and radius $\rho>0$ is denoted as
\[
    B_\rho(x_0)=\{x\in \RR^n:|x-x_0|<\rho\}.
\]
Parabolic cylinders with center $z_0=(x_0,t_0)$ and quadratic scaling in time are denoted as
\[
    Q_\rho(z_0)=B_\rho(x_0)\times I_\rho(t_0),
\]
where
\[
 I_\rho(t_0)=(t_0-\rho^2,t_0+\rho^2).
\]

We use the following notation for the double-phase functional. With the non-negative coefficient function $a(\cdot)$ being fixed, we define a function $H(z,s):\Omega_T\times \RR^+\longrightarrow\RR^+$ as
\[
    H(z,s)=s^p+a(z)s^q.
\]
We use two types of intrinsic cylinders. For $\la\geq1$ and $\rho > 0$, a $p$-intrinsic cylinder centered at $z_0=(x_0,t_0)$ is 
\begin{align}\label{def_Q_cylinder}
	Q_\rho^\la(z_0)= B^\la_\rho(x_0)\times I_{\rho}(t_0), \quad B^\la_\rho(x_0) = B_{\la^\frac{p-2}{2}\rho}(x_0), 
\end{align}
and a $(p,q)$-intrinsic cylinders centered at $z_0=(x_0,t_0)$ is
\begin{align}\label{def_G_cylinder}
\begin{split}
    &G_{\rho}^\la(z_0)=B^\la_{\rho}(x_0)\times J_{\rho}^\la(t_0),\\
    &J_\rho^{\la}(t_0)=\biggl(t_0-\frac{\la^p}{H(z_0,\la)}\rho^2,t_0+\frac{\la^p}{H(z_0,\la)}\rho^2\biggr).   
\end{split}
\end{align}
Note that $\tfrac{\la^p}{H(z_0,\la)}\rho^2=\tfrac{\la^2}{H(z_0,\la)}(\la^\frac{p-2}{2}\rho)^2$ and thus $G_\rho^\la(z_0)$ is the standard intrinsic cylinder for $(p,q)$-Laplace system.
For $c>0$, we write
\[
    cQ_\rho^\la(z_0)=Q_{c\rho}^\la(z_0)
    \quad\text{and}\quad 
    cG_\rho^\la(z_0)=G_{c\rho}^\la(z_0).
\]
We also consider parabolic cylinders with arbitrary scaling in time and denote
 \[
     Q_{R,\ell}(z_0)=B_R(x_0)\times (t_0-\ell,t_0+\ell),\quad R,\ell>0.
 \]
 
 The $(n+1)$-dimensional Lebesgue measure of a set $E\subset\RR^{n+1}$ is denoted as $|E|$.
For $f\in L^1(\Omega_T,\RR^N)$ and a measurable set $E\subset\Om_T$ with $0<|E|<\infty$, we denote the integral average of $f$ over $E$ as
\[
	(f)_{E}=\frac{1}{|E|}\iint_{E}f\,dz=\fiint_{E}f\,dz.
\]

\subsection{Main result}
We consider weak solutions to the parabolic double-phase system
\begin{align}\label{sec1:1}
	u_t-\dv\mA(z,\na u)=-\dv(|F|^{p-2}F+a(z)|F|^{q-2}F)
\end{align}
in $\Omega_T=\Omega\times(0,T)$, where $\Omega$ is a bounded domain in $\mathbb{R}^n$, $n\geq 2$, and $T>0$. Here $\mA(z,\na u):\Omega_T\times \RR^{Nn}\longrightarrow \RR^{Nn}$ with $N\ge1$ is a Carath\'eodory vector field satisfying the following structure assumptions: there exist constants $0<\nu\le L<\infty$ such that
	\[
			\mA(z,\xi)\cdot \xi\ge \nu(|\xi|^p+a(z)|\xi|^q)\quad\text{and}\quad
			|\mA(z,\xi)|\le L(|\xi|^{p-1}+a(z)|\xi|^{q-1})
	\]
for almost every $z\in \Omega_T$ and every $\xi\in \RR^{Nn}$. The source term $F:\Omega_T\longrightarrow\RR^{Nn}$ satisfies
\[
	\iint_{\Omega_T} H(z,|F|)\ dz=\iint_{\Om_T}(|F|^p+a(z)|F|^q)\,dz<\infty.
\]
We assume that $a \geq 0$ and $a\in C^{\alpha,\frac\alpha2}(\Omega_T)$ for some $\al \in (0,1]$. 
Here $a\in C^{\alpha,\frac\alpha2}(\Omega_T)$ means that $a\in L^{\infty}(\Omega_T)$ and there exists a constant $[a]_\al=[a]_{\alpha,\alpha/2;\Omega_T}<\infty$, such that
\begin{align}
	\label{def_holder}
		|a(x,t)-a(y,t)|\le [a]_\al|x-y|^\alpha\quad\text{and}\quad
		|a(x,t)-a(x,s)|\le [a]_\al|t-s|^\frac{\alpha}{2},
\end{align}
for every $(x,y)\in\Omega$ and $(t,s)\in (0,T)$. 

\begin{definition}\label{weak_solution}
	A map $u:\Om_T\longrightarrow\RR^N$ satisfying
 \[
			u\in C(0,T;L^2(\Om,\RR^N))\cap L^1(0,T;W^{1,1}(\Om,\RR^N))
	\]
 and
	\[
		\iint_{\Omega_T} H(z,|\na u|)\, dz=\iint_{\Omega_T}(|\nabla u|^p+a(z)|\nabla u|^q)\,dz<\infty,
	\]
is a weak solution to \eqref{sec1:1}, if
\begin{align*}
\begin{split}
    &\iint_{\Om_T}(-u\cdot\varphi_t+\mA(z,\na u)\cdot \na \varphi)\,dz\\
    &\qquad=\iint_{\Om_T}(|F|^{p-2}F\cdot\na \varphi+a(z)|F|^{p-2}F\cdot \na\varphi)\,dz
\end{split}
\end{align*}
for every $\varphi\in C_0^\infty(\Omega_T,\RR^N)$.
\end{definition}

The main result of this paper is the following higher integrability estimate for the gradient of a weak solution to \eqref{sec1:1}. The constants depend on
  \begin{align*}
  \begin{split}
      \mathit{data}=&(n,N,p,q,\alpha,\nu,L,[a]_{\alpha},\diam(\Omega),\\
      &\qquad\|u\|_{L^\infty(0,T;L^2(\Omega))},\|H(z,|\na u|)\|_{L^1(\Omega_T)},\|H(z,|F|)\|_{L^1(\Omega_T)}).
  \end{split}
\end{align*}

\begin{theorem}\label{main_theorem}
	Let $u$ be a weak solution to \eqref{sec1:1}. 
	There exist constants $0<\ep_0=\ep_0(\mathit{data})$ and $c=c(\mathit{data},\lVert a\rVert_{L^\infty(\Om_T)})\ge1$, such that
	\begin{align*}
		\begin{split}
			&\fiint_{Q_{r}(z_0)}H(z,|\na u|)^{1+\ep}\,dz
			\le c \left(\fiint_{Q_{2r}(z_0)}H(z,|\na u|)\,dz\right)^{1+\frac{2q\ep }{p(n+2)-2n}}\\
			&\qquad\qquad\qquad+c\left(\fiint_{Q_{2r}(z_0)}(H(z,|F|)+1)^{1+\ep}\,dz\right)^{\frac{2q}{p(n+2)-2n}}
		\end{split}
	\end{align*}
	 for every $Q_{2r}(z_0)\subset\Om_T$ and $\ep\in(0,\ep_0)$.
\end{theorem}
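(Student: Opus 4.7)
The plan is to follow the intrinsic-geometry approach of \cite{MR1749438} and its double-phase extension in \cite{KKM}, adapted to the singular regime $\tfrac{2n}{n+2}<p\le 2$. The first step is to establish a Caccioppoli-type energy inequality for weak solutions of \eqref{sec1:1} on both $p$-intrinsic cylinders $Q_\rho^\la(z_0)$ and $(p,q)$-intrinsic cylinders $G_\rho^\la(z_0)$, with constants independent of $\la$ provided the natural coupling between the gradient and $\la$ holds on the given cylinder. Then, for $Q_{2r}(z_0)\subset\Om_T$ fixed and a level $\la$ suitably larger than $\fiint_{Q_{2r}(z_0)}H(z,|\na u|)\,dz$, I would run a stopping-time argument at each point $z_0'$ of the super-level set $\{H(z,|\na u|)>\la\}\cap Q_r(z_0)$, selecting the largest radius $\rho(z_0')$ for which the intrinsic average of $H(z,|\na u|)$ is comparable to $\la$, with averages exceeding $\la$ on all smaller concentric cylinders.

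At each such point the choice of geometry is dictated by comparing $a(z_0')$ with an appropriate power of $\la$: when $a(z_0')$ is small relative to $\la^{q-p}$, the $p$-term dominates and one uses $Q_\rho^\la(z_0')$; otherwise the $(p,q)$-intrinsic cylinder $G_\rho^\la(z_0')$ is used. In the second case, Lemma~\ref{lemma_decay} guarantees the property \ref{q2}, i.e.\ that $a(z)\sim a(z_0')$ on the whole cylinder, so the system behaves like a standard $(p,q)$-Laplace system and the methods of \cite{MR4302665} apply. In the first case, the bound $q\le p+\tfrac{\al\mu}{n+2}$ in \eqref{range_pq}, combined with the decay of the radius in $\la$, allows one to absorb the $q$-contribution into the $p$-contribution on $Q_\rho^\la(z_0')$, reducing the analysis to the singular $p$-Laplace setting of \cite{MR1749438}, as in Lemma~\ref{sec5:lem:2}.

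The next step is, on each intrinsic cylinder, to combine the Caccioppoli inequality with a parabolic Sobolev--Poincaré inequality in the correct singular scaling to obtain a reverse Hölder inequality for $H(z,|\na u|)$ with an $F$-tail. It is at this stage that the singular scaling deficit $\mu/p$ enters and, through it, the exponent $\tfrac{2q}{p(n+2)-2n}$ appearing on the right-hand side of the theorem. Once the local reverse inequalities are in hand, the proof is closed by a Vitali covering of the super-level set by intrinsic cylinders, Fubini's theorem on the distribution function of $H(z,|\na u|)$, and a Gehring-type iteration on levels $\la$, which upgrades the reverse Hölder inequality into the claimed higher integrability estimate on $Q_r(z_0)$.

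The main obstacle I expect is the bookkeeping at the level of the reverse Hölder inequality on the $p$-intrinsic cylinders: the $q$-term has genuinely different homogeneity from the $p$-term, and absorbing it requires quantitative control both of how the stopping-time radius decays with $\la$ via Lemma~\ref{lemma_decay}, and of how the Hölder modulus of $a(\cdot)$ compensates for the mismatch, which is precisely the origin of the restriction $q\le p+\tfrac{\al\mu}{n+2}$. Verifying this uniformly at all stopping-time points in the singular scaling, where the spatial size of $B_\rho^\la(x_0)$ grows as $\la\to\infty$ rather than shrinks as in the degenerate case, is the technical core of the proof and the reason the singular range is genuinely different from \cite{KKM}.
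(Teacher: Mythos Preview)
Your proposal is correct and follows essentially the same route as the paper: Caccioppoli plus parabolic Sobolev--Poincar\'e on $p$- and $(p,q)$-intrinsic cylinders to obtain reverse H\"older inequalities (the paper's Lemmas~\ref{sec5:lem:3} and \ref{sec5:lem:6}), then a stopping-time argument with the $p$/$(p,q)$ dichotomy governed by Lemma~\ref{lemma_decay}, a Vitali covering adapted to the two geometries, and a Fubini/Gehring step. One small inaccuracy: in the convention used here $B_\rho^\la$ has radius $\la^{(p-2)/2}\rho$, which \emph{shrinks} as $\la\to\infty$ since $p\le 2$, not grows; this does not affect your argument but is worth correcting.
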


\subsection{Auxiliary lemmas}
We start with two estimates derived from the weak formulation of \eqref{sec1:1}. A priori Definition~\ref{weak_solution} does not guarantee that $u$ can be used as a test function in the weak formulation and thus we do not immediately obtain the following Caccioppoli inequality. A Lipschitz truncation method could be used as in the degenerate case \cite{KKS}, but we omit the proof since it is beyond the scope of this paper.
\begin{lemma}\label{sec3:lem:1}
	Let $u$ be a weak solution to \eqref{sec1:1}. There exists a constant $c=c(n,p,q,\nu,L)$, such that
	\begin{align*}
		\begin{split}
			&\sup_{t\in (t_0-\tau,t_0+\tau)}\fint_{B_{r}(x_0)}\frac{|u-u_{Q_{r,\tau}(z_0)}|^2}{\tau}\,dx+\fiint_{Q_{r,\tau}(z_0)}H(z,|\na u|)\,dz\\
			&\qquad\le c\fiint_{Q_{R,\ell}(z_0)}\left(\frac{|u-u_{Q_{R,\ell}(z_0)}|^p}{(R-r)^p}+a(z)\frac{|u-u_{Q_{R,\ell}(z_0)}|^q}{(R-r)^q}\right)\,dz\\
			&\qquad\qquad+c\fiint_{Q_{R,\ell}(z_0)}\frac{|u-u_{Q_{R,\ell}(z_0)}|^2}{\ell-\tau}\,dz+c\fiint_{Q_{R,\ell}(z_0)}H(z,|F|)\,dz
		\end{split}
	\end{align*}
 for every $Q_{R,\ell}(z_0)\subset\Omega_T$, with $R,\ell>0$, $r\in [R/2,R)$ and $\tau\in[\ell/2^2,\ell)$.
\end{lemma}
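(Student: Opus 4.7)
The plan is to derive the inequality in the standard Caccioppoli way, by testing the weak formulation with a cut-off version of $u-(u)_{Q_{R,\ell}(z_0)}$. Fix cut-offs $\eta\in C_0^\infty(B_R(x_0))$ with $\eta\equiv 1$ on $B_r(x_0)$ and $|\na\eta|\le c/(R-r)$, and $\zeta\in C_0^\infty((t_0-\ell,t_0+\ell))$ with $\zeta\equiv 1$ on $(t_0-\tau,t_0+\tau)$ and $|\zeta'|\le c/(\ell-\tau)$. Setting $v=u-(u)_{Q_{R,\ell}(z_0)}$, I would insert $\varphi=\eta^q\zeta^2 v$ into the weak formulation; the power $q\ge p$ is chosen so that the cut-off dominates the strongest nonlinearity when we later perform Young's inequality. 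Since $u_t$ only exists distributionally, this step must be interpreted through Steklov averages or a time mollification, and the ultimate justification of $v$ itself as an admissible test function relies on the Lipschitz truncation construction from \cite{KKS} flagged in the remark preceding the statement.

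\textbf{Estimation of the three terms.} For the parabolic term, a standard reorganization (using $v_t=u_t$ since $(u)_{Q_{R,\ell}(z_0)}$ is constant in time) yields, after passing to the limit in the mollifier,
\[
\tfrac12\sup_{t\in(t_0-\tau,t_0+\tau)}\int_{B_r(x_0)}|v(\cdot,t)|^2\,dx
\]
on the left, modulo a residual bounded by $\iint_{Q_{R,\ell}(z_0)}|v|^2|\zeta\zeta'|\,dz\le c\iint_{Q_{R,\ell}(z_0)}|v|^2/(\ell-\tau)\,dz$. For the elliptic part, expanding $\na\varphi=q\eta^{q-1}\zeta^2 v\,\na\eta+\eta^q\zeta^2\na u$ and invoking $\mA(z,\na u)\cdot\na u\ge\nu H(z,|\na u|)$ produces the main energy $\nu\iint\eta^q\zeta^2 H(z,|\na u|)\,dz$. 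The cross term with $\na\eta$ is estimated via $|\mA(z,\na u)|\le L(|\na u|^{p-1}+a(z)|\na u|^{q-1})$ and Young's inequality with conjugate exponents $(p/(p-1),p)$ and $(q/(q-1),q)$: distributing the $\eta^{q-1}$ factor gives terms $\de\eta^q(|\na u|^p+a(z)|\na u|^q)$ (absorbable into the LHS) plus $c_\de(|v|^p/(R-r)^p+a(z)|v|^q/(R-r)^q)$ on the right. The source $\dv(|F|^{p-2}F+a(z)|F|^{q-2}F)$ is handled identically, producing $H(z,|F|)$ on the right after Young.

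\textbf{Absorption and main obstacle.} Choosing $\de$ small and absorbing, then using $\eta\equiv 1$ on $B_r$ and $\zeta\equiv 1$ on $(t_0-\tau,t_0+\tau)$ to reduce the LHS integrals to the smaller cylinder $Q_{r,\tau}(z_0)$, and finally dividing by $|Q_{r,\tau}(z_0)|$ and exploiting the comparability $|Q_{R,\ell}(z_0)|\le c|Q_{r,\tau}(z_0)|$ (valid since $R\le 2r$ and $\ell\le 4\tau$) yields the claimed inequality. The principal obstacle, acknowledged in the passage preceding the lemma, is the rigorous admissibility of $v$ as a test function at the regularity level of Definition~\ref{weak_solution}; in the singular range $p<2$ a plain Steklov argument does not suffice and the Lipschitz truncation approach of \cite{KKS} must be imported, which is precisely why the authors elect to omit the proof.
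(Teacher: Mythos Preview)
Your sketch is the standard Caccioppoli argument and is essentially correct; the paper itself omits the proof entirely, merely remarking that the admissibility of $u$ as a test function requires the Lipschitz truncation of \cite{KKS}, which you also flag. One small point you glossed over: your test function is built with $v=u-(u)_{Q_{R,\ell}(z_0)}$, so the parabolic term produces $\sup\fint_{B_r}|u-(u)_{Q_{R,\ell}}|^2$, whereas the stated inequality carries the \emph{inner} mean $(u)_{Q_{r,\tau}}$ in the $\sup$ term. This is harmless---write $|u-(u)_{Q_{r,\tau}}|^2\le 2|v|^2+2|(u)_{Q_{R,\ell}}-(u)_{Q_{r,\tau}}|^2$, bound the constant difference by $\fiint_{Q_{r,\tau}}|v|^2\,dz$ via Jensen, and absorb the resulting $\tau^{-1}\fiint|v|^2$ into the $(\ell-\tau)^{-1}$ term on the right using $\ell-\tau\le 3\tau$---but it should be mentioned for completeness.
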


The following parabolic Poincar\'e inequality can be shown in the same way as in \cite{KKM}.
\begin{lemma}\label{sec3:lem:3}
	Let $u$ be a weak solution to \eqref{sec1:1}. There exists a constant $c=c(n,N,m,L)$, such that
	\begin{align*}
    		\begin{split}
			&\fiint_{Q_{R,\ell}(z_0)}\frac{|u-u_{Q_{R,\ell}(z_0)}|^{\theta m}}{R^{\theta m}}\,dz\le c\fiint_{Q_{R,\ell}(z_0)}|\na u|^{\theta m}\,dz\\
			&\qquad+c\left(\frac{\ell}{R^2}\fiint_{Q_{R,\ell}(z_0)}(|\na u|^{p-1}+a(z)|\na u|^{q-1}+|F|^{p-1}+a(z)|F|^{q-1})\,dz\right)^{\theta m}
		\end{split}
	\end{align*}
 for every $Q_{R,\ell}(z_0\subset\Omega_T$ with $R,\ell>0$, $m\in(1,q]$ and $\theta\in(1/m,1]$.
\end{lemma}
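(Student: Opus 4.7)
The plan is to follow the standard parabolic Poincaré template, adapted to the double-phase structure of the right hand side of \eqref{sec1:1}. Writing $u_{B_R(x_0)}(t) = \fint_{B_R(x_0)} u(x,t)\,dx$ for the spatial mean at time $t$, I would split
\[
    u(x,t)-u_{Q_{R,\ell}(z_0)} = \bigl(u(x,t)-u_{B_R(x_0)}(t)\bigr) + \bigl(u_{B_R(x_0)}(t)-u_{Q_{R,\ell}(z_0)}\bigr)
\]
and estimate the two summands separately. Raising to the power $\theta m$, dividing by $R^{\theta m}$, and integrating over $Q_{R,\ell}(z_0)$ reduces the statement to bounding these two pieces.

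For the spatial oscillation $u(x,t)-u_{B_R(x_0)}(t)$ the classical (elliptic) Sobolev–Poincaré inequality applied on $B_R(x_0)$ at every fixed time, with the exponent $\theta m \ge 1$, yields
\[
    \fint_{B_R(x_0)} \frac{|u(x,t)-u_{B_R(x_0)}(t)|^{\theta m}}{R^{\theta m}}\,dx
    \le c\fint_{B_R(x_0)} |\na u(x,t)|^{\theta m}\,dx,
\]
and integration in $t$ produces the first term on the right hand side of the claim. Here the constant depends only on $n,N$ and $\theta m$ (hence on $m$).

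For the temporal oscillation $u_{B_R(x_0)}(t)-u_{Q_{R,\ell}(z_0)}$, I would use the weak equation. Fix a cut-off $\eta\in C_0^\infty(B_R(x_0))$ with $\eta\ge 0$, $\int\eta\,dx\simeq |B_R(x_0)|$, and $|\na\eta|\le c/R$, and set $\tilde u(t) = \int u(x,t)\eta(x)\,dx\,/\int\eta\,dx$. Testing \eqref{sec1:1} (after a Steklov-averaging or Lipschitz-truncation justification, as in the argument referenced before Lemma~\ref{sec3:lem:1}) with $\eta(x)\mathbf{1}_{[s,t]}(\tau)$ gives
\[
    |\tilde u(t)-\tilde u(s)|
    \le \frac{c}{R}\int_s^t\fint_{B_R(x_0)}\bigl(|\mA(z,\na u)|+|F|^{p-1}+a(z)|F|^{q-1}\bigr)\,dx\,d\tau,
\]
and the structural bound $|\mA(z,\xi)|\le L(|\xi|^{p-1}+a(z)|\xi|^{q-1})$ inserts exactly the four terms appearing in the statement. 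Taking $s,t\in(t_0-\ell,t_0+\ell)$, using $|t-s|\le 2\ell$ and converting the iterated integral into $\fiint_{Q_{R,\ell}(z_0)}$ produces a factor $\ell/R$, so that
\[
    \frac{|\tilde u(t)-u_{Q_{R,\ell}(z_0)}|^{\theta m}}{R^{\theta m}}
    \le c\Biggl(\frac{\ell}{R^2}\fiint_{Q_{R,\ell}(z_0)}\bigl(|\na u|^{p-1}+a|\na u|^{q-1}+|F|^{p-1}+a|F|^{q-1}\bigr)\,dz\Biggr)^{\theta m}.
\]
The difference $u_{B_R(x_0)}(t)-\tilde u(t)$ is controlled by $c\,\fint_{B_R(x_0)}|u-u_{B_R(x_0)}(t)|\,dx$, which by Poincaré is in turn bounded by $cR\fint_{B_R(x_0)}|\na u|\,dx$; after raising to the power $\theta m$ and dividing by $R^{\theta m}$, this contribution is absorbed into the first term on the right hand side, using $\theta m\ge 1$ and Hölder's inequality.

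The principal technical obstacle is not the geometry of the double-phase setting, which enters only through the additive $q$-term in the upper bound for $\mA$ and $F$ and is handled in parallel with the $p$-term, but the justification of using $\eta\otimes\mathbf{1}_{[s,t]}$ as a test function. This requires either a Steklov averaging of the weak formulation or the Lipschitz truncation method from \cite{KKS}, analogous to the justification deferred before Lemma~\ref{sec3:lem:1}; the proof in \cite{KKM} for $p\ge 2$ applies verbatim, since the argument above never uses the sign of $p-2$.
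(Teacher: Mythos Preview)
Your proposal is correct and follows exactly the standard parabolic Poincar\'e template that the paper has in mind: the paper does not supply its own proof but simply states that the lemma ``can be shown in the same way as in \cite{KKM}'', and the argument in \cite{KKM} is precisely the one you sketch --- split $u-u_Q$ into the spatial oscillation handled by the elliptic Poincar\'e inequality on each time slice, and the temporal oscillation of a weighted spatial mean controlled via the weak formulation and the structure bound on $\mA$. The only cosmetic point is that, since $u$ is $\RR^N$-valued, the test function should be taken componentwise (i.e.\ $\eta(x)\mathbf{1}_{[s,t]}(\tau)e_i$ for each $i$), but this changes nothing in the estimates.
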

 
Finally, we have two technical lemmas. The first lemma is a Gagliardo-Nirenberg inequality and the second one is a standard iteration lemma, see \cite[Lemma 8.3]{MR1962933}.
\begin{lemma}\label{sec2:lem:1}
	Let $B_{\rho}(x_0)\subset\RR^n$, $\sig,s,r\in[1,\infty)$ and $\vartheta\in(0,1)$ such that 
	\[
		-\frac{n}{\sig}\le \vartheta\left(1-\frac{n}{s}\right)-(1-\vartheta)\frac{n}{r}.
	\]
	Then there exists a constant $c=c(n,\sig)$, such that
	\[
		\fint_{B_{\rho}(x_0)}\frac{|v|^\sig}{\rho^\sig}\,dx
		\le c\left(\fint_{B_{\rho}(x_0)}\left(\frac{|v|^s}{\rho^s}+|\na v|^s\right)\,dx\right)^\frac{\vartheta \sig}{s}\left(\fint_{B_{\rho}(x_0)}\frac{|v|^r}{\rho^r}\,dx\right)^\frac{(1-\vartheta)\sig}{r}
	\]
  for every $v\in W^{1,s}(B_{\rho}(x_0))$.
\end{lemma}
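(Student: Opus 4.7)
The plan is to reduce to the unit ball by scaling and then invoke the classical Gagliardo--Nirenberg interpolation inequality, finishing with a Hölder step on a bounded domain.

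First I would introduce the rescaled function $w(y) = v(x_0+\rho y)/\rho$ for $y\in B_1(0)$, which satisfies $\na_y w(y) = \na_x v(x_0+\rho y)$ and $|w(y)| = |v(x_0+\rho y)|/\rho$. A direct change of variables $x = x_0+\rho y$ then transforms each averaged integral on $B_\rho(x_0)$ into the corresponding averaged integral on $B_1(0)$; for instance, $\fint_{B_\rho(x_0)} |v|^\sig/\rho^\sig\,dx = \fint_{B_1(0)} |w|^\sig\,dy$, and analogously for the other two averages. This reduces the statement to proving the inequality at $\rho = 1$.

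Next I would choose $\sig_0 \in [1, \infty)$ with
\[
    -\frac{n}{\sig_0} = \vartheta\left(1 - \frac{n}{s}\right) - (1-\vartheta)\frac{n}{r},
\]
and invoke the standard Gagliardo--Nirenberg inequality on the unit ball, which yields $\|w\|_{L^{\sig_0}(B_1)} \le c\, \|w\|_{W^{1,s}(B_1)}^\vartheta \|w\|_{L^r(B_1)}^{1-\vartheta}$. This follows from the Euclidean Gagliardo--Nirenberg inequality applied to a $W^{1,s}$-extension of $w$ to $\RR^n$ via a bounded linear extension operator. The hypothesis $-n/\sig \le -n/\sig_0$ forces $\sig \le \sig_0$, so Hölder's inequality on the bounded set $B_1$ yields $\|w\|_{L^\sig(B_1)} \le c(n)\, \|w\|_{L^{\sig_0}(B_1)}$. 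Raising to the $\sig$-th power, chaining the two estimates, and undoing the rescaling produces the claimed inequality.

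The only mild obstacle is the borderline case where the scaling identity forces $\sig_0 = \infty$, i.e.\ when $\vartheta s \ge n$ and the dimensional condition holds with equality. I would handle this by replacing $\sig_0$ with a slightly smaller finite exponent still in the admissible range of the Gagliardo--Nirenberg inequality (this is permissible precisely because the hypothesis is an inequality rather than an equality), and then repeating the Hölder step. Apart from this routine bookkeeping, the argument is just scaling, a standard embedding, and Hölder, which explains why the constant only needs to depend on $n$ and $\sig$ once $s, r, \vartheta$ are regarded as fixed parameters of the inequality.
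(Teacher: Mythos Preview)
The paper does not give a proof of this lemma; it simply introduces it as ``a Gagliardo--Nirenberg inequality'' among the auxiliary technical lemmas and moves on, so there is no paper argument to compare against. Your proposal is the standard textbook route (scale to the unit ball, apply classical Gagliardo--Nirenberg via a $W^{1,s}$-extension, then H\"older down from the critical exponent to $\sig$), and it is correct in outline.

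One small point worth tightening: your discussion of the ``borderline case'' only names the situation $\sig_0=\infty$, i.e.\ when $\vartheta(1-n/s)-(1-\vartheta)n/r=0$. But the right-hand side of the dimensional condition can be strictly positive (the supercritical/Morrey regime, e.g.\ when $\vartheta s>n$ and $r$ is large), in which case there is no finite $\sig_0$ satisfying your scaling identity at all. The remedy you sketch---replace $\sig_0$ by any finite exponent at least $\sig$ for which Gagliardo--Nirenberg is still valid, then apply H\"older on $B_1$---covers this case too; just make sure your write-up says so explicitly rather than framing it as a single borderline. Also note that the constant coming from the classical Gagliardo--Nirenberg step will in general depend on $s$, $r$, and $\vartheta$ as well; the paper's stated dependence $c=c(n,\sig)$ is written with those parameters implicitly fixed, as you correctly observe.
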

\begin{lemma}\label{sec2:lem:2}
	Let $0<r<R<\infty$ and $h:[r,R]\longrightarrow\RR$ be a non-negative and bounded function. Suppose there exist $\vartheta\in(0,1)$, $A,B\ge0$ and $\gamma>0$ such that
	\[
		h(r_1)\le \vartheta h(r_2)+\frac{A}{(r_2-r_1)^\gamma}+B
		\quad\text{for all}\quad
		0<r\le r_1<r_2\le R.
	\]
	Then there exists a constant $c=c(\vartheta,\gamma)$, such that
	\[
		h(r)\le c\left(\frac{A}{(R-r)^\gamma}+B\right).
	\]
\end{lemma}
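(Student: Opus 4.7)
The plan is the classical Giaquinta--Giusti iteration argument: iterate the subadditive inequality on a geometric sequence of radii in $[r,R]$ and sum the resulting errors as a convergent geometric series.

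Concretely, I would fix a parameter $\tau\in(0,1)$ and set $r_0=r$, $r_{i+1}=r_i+(1-\tau)\tau^i(R-r)$, so that $r_{i+1}-r_i=(1-\tau)\tau^i(R-r)$ and $r_i\nearrow R$. Applying the hypothesis between $r_i<r_{i+1}$ gives
\[
h(r_i)\le \vartheta\, h(r_{i+1})+\frac{A}{\bigl((1-\tau)(R-r)\bigr)^\gamma}\tau^{-i\gamma}+B.
\]
Iterating $k$ times, I obtain
\[
h(r)\le \vartheta^k h(r_k)+\frac{A}{\bigl((1-\tau)(R-r)\bigr)^\gamma}\sum_{i=0}^{k-1}\bigl(\vartheta\tau^{-\gamma}\bigr)^i+B\sum_{i=0}^{k-1}\vartheta^i.
\]

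The key choice is $\tau\in(\vartheta^{1/\gamma},1)$, for example $\tau=\vartheta^{1/(2\gamma)}$. This guarantees $\vartheta\tau^{-\gamma}<1$ so the first geometric series converges, and clearly $\sum_i\vartheta^i\le(1-\vartheta)^{-1}$. Since $h$ is bounded on $[r,R]$ and $\vartheta^k\to0$, passing to the limit $k\to\infty$ yields
\[
h(r)\le \frac{A}{(1-\tau)^\gamma\bigl(1-\vartheta\tau^{-\gamma}\bigr)(R-r)^\gamma}+\frac{B}{1-\vartheta},
\]
which is the desired estimate with a constant depending only on $\vartheta$ and $\gamma$ through $\tau$.

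There is no real obstacle here; the only point to watch is that the sequence $r_i$ stays in $[r,R]$ so that the hypothesis applies at every step (ensured by $\sum_{i\ge0}(1-\tau)\tau^i=1$), and that the quantitative choice of $\tau$ strictly between $\vartheta^{1/\gamma}$ and $1$ is made before iterating so that the summation constant is finite. Boundedness of $h$ is used only to kill the term $\vartheta^k h(r_k)$ in the limit; no continuity or monotonicity of $h$ is needed.
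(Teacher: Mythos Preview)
Your proof is correct and is exactly the standard Giaquinta--Giusti iteration; the paper does not supply its own proof but simply refers to \cite[Lemma~8.3]{MR1962933}, whose argument is precisely the one you wrote out.
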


\section{Reverse Hölder inequality}
In this section we provide a reverse Hölder inequality for $u$, a weak solution to \eqref{sec1:1}. The reverse Hölder inequality is used to show the higher integrability result in the next section. We consider the $p$-intrinsic and $(p,q)$-intrinsic cases in separate subsections. In both cases we show parabolic Sobolev-Poincar\'e inequalities and a series of estimates leading to the reverse Hölder inequality. 

Throughout this section, let $z_0=(x_0,t_0)\in\Om_T$, with $x_0\in\Omega$ and $t_0\in(0,T)$, be a Lebesgue point of $|\na u(z)|^p+a(z)|\na u(z)|^q$ satisfying
\[
	|\na u(z_0)|^p+a(z_0)|\na u(z_0)|^q>\La
\]
for some 
$\La>1+\|a\|_{L^\infty(\Omega_T)}$.
Note that $H(z_0,s)$ is strictly increasing and continuous with
\[
	\lim_{s\to0^+}H(z_0,s)=0
	\quad\text{and}\quad
	\lim_{s\to\infty}H(z_0,s)=\infty.
\] 
Therefore, by the intermediate value theorem for continuous functions, there exists $\la=\la(z_0)>1$, such that
\[
	\La=\la^p+a(z_0)\la^q.
\]
We also use the constants
\begin{align}\label{def_M1}
 M_1=\frac{1}{2|B_1|}\iint_{\Omega_T}\left(H(z,|\na u|)+H(z,|F|)\right)\,dz 
\end{align} 
and 
\[
 M_2=\|u\|_{L^\infty(0,T;L^2(\Omega))}, \quad K=2+40[a]_\alpha M_1^\frac{\alpha}{n+2}, \quad \cv = 10K.    
\]

In the $p$-intrinsic case we consider a cylinder $Q_\rho^\la(z_0)$ defined as in \eqref{def_Q_cylinder} and assume the following: 
\begin{enumerate}
    \item[(p-1)]\label{p1} $p$-intrinsic case: $K\la^{p}\ge a(z_0)\la^q$.
    \item[(p-2)]\label{p2} Stopping time argument for a $p$-intrinsic cylinder:
    \begin{enumerate}
        \item[(p-i)]\label{p3} $\fiint_{Q_{\rho}^\la(z_0)}\left(H(z,|\na u|)+H(z,|F|)\right)\,dz= \la^p,$
        \item[(p-ii)]\label{p4} $\fiint_{Q_{s}^\la(z_0)}\left(H(z,|\na u|)+H(z,|F|)\right)\,dz< \la^p$ for every $s\in(\rho,2\cv\rho]$. 
    \end{enumerate}
\end{enumerate}
In the $(p,q)$-intrinsic case we consider a cylinder $G_\rho^\la(z_0)$ defined as in \eqref{def_G_cylinder} and assume the following: 
\begin{enumerate}
    \item[(p,q-1)]\label{q1} $(p,q)$-intrinsic case: $K\la^{p}< a(z_0)\la^q$.
    \item[(p,q-2)]\label{q2} $\tfrac{a(z_0)}{2}\le a(z)\le 2a(z_0)$ for every $z\in G_{4\rho}^\la(z_0).$
    \item[(p,q-3)]\label{q3} Stopping time argument for a $(p,q)$-intrinsic cylinder:
    \begin{enumerate}
        \item[(p,q-i)]\label{q4} $\fiint_{G_{\rho}^\la(z_0)}\left(H(z,|\na u|)+H(z,|F|)\right)\,dz=\La,$
        \item[(p,q-ii)]\label{q5} $\fiint_{G_{s}^\la(z_0)}\left(H(z,|\na u|)+H(z,|F|)\right)\,dz< \La$ for every $s\in(\rho,2\cv\rho]$.
    \end{enumerate}
\end{enumerate}
The fact that these two cases are complementary will be shown in Section~\ref{stopping time argument}.

The following decay estimate will be used in this and the next section. Note that the estimate holds without assumption \ref{p1}.
\begin{lemma}\label{lemma_decay}
    Assumption \ref{p3} implies 
    \begin{align}\label{sec6:1544}
        \rho^\alpha\le \frac{K}{40[a]_\alpha}\la^{-\frac{\alpha\mu}{n+2}}\quad\text{and}\quad\rho^\alpha\la^q\le \frac{K}{40[a]_\alpha}\la^p,
    \end{align}
    where $\mu>0$ is defined in \eqref{range_pq}.
\end{lemma}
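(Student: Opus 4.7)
The plan is to directly exploit assumption \ref{p3} together with the definition of $M_1$, then match the exponent arithmetic against $\mu$ and the range of $q$ from \eqref{range_pq}.

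First, I would compute the measure of the $p$-intrinsic cylinder explicitly:
\[
|Q_\rho^\la(z_0)| = 2|B_1|\,\la^{\frac{n(p-2)}{2}}\rho^{n+2}.
\]
Multiplying assumption \ref{p3} by this measure and using $Q_\rho^\la(z_0)\subset\Om_T$ together with the definition \eqref{def_M1} of $M_1$ yields
\[
2|B_1|\,\la^{p+\frac{n(p-2)}{2}}\rho^{n+2} \le \iint_{\Om_T}\left(H(z,|\na u|)+H(z,|F|)\right)dz = 2|B_1| M_1.
\]
The key algebraic observation is that $p+\tfrac{n(p-2)}{2}=\tfrac{p(n+2)-2n}{2}=\mu$, so
\[
\la^{\mu}\rho^{n+2}\le M_1,
\qquad\text{hence}\qquad \rho^{\alpha}\le M_1^{\frac{\alpha}{n+2}}\la^{-\frac{\alpha\mu}{n+2}}.
\]

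Next, from the definition $K=2+40[a]_\alpha M_1^{\frac{\alpha}{n+2}}$ one reads off $M_1^{\frac{\alpha}{n+2}}\le \tfrac{K}{40[a]_\alpha}$, which immediately gives the first inequality of \eqref{sec6:1544}.

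Finally, for the second inequality I would multiply through by $\la^{q}$:
\[
\rho^{\alpha}\la^{q}\le \frac{K}{40[a]_\alpha}\la^{q-\frac{\alpha\mu}{n+2}}.
\]
Here the upper bound $q\le p+\tfrac{\alpha\mu}{n+2}$ from \eqref{range_pq} is precisely what is needed to replace the exponent on the right by $p$, giving $\rho^{\alpha}\la^{q}\le \tfrac{K}{40[a]_\alpha}\la^{p}$. There is no real obstacle here; the only subtlety is the verification of the exponent identity $p+\tfrac{n(p-2)}{2}=\mu$ and recognizing that the range of $q$ in \eqref{range_pq} is exactly tuned to make the second inequality follow from the first. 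Note that the argument does not invoke assumption \ref{p1}, consistently with the statement of the lemma.
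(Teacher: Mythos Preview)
Your proof is correct and follows essentially the same approach as the paper's own proof: both unwind \ref{p3} using the explicit measure of $Q_\rho^\la(z_0)$ and the definition of $M_1$ to obtain $\rho^{n+2}\la^\mu\le M_1$, then invoke the definition of $K$ for the first inequality and the upper bound on $q$ from \eqref{range_pq} (together with $\la\ge1$) for the second. Your write-up is slightly more explicit about the exponent identity $p+\tfrac{n(p-2)}{2}=\mu$, but there is no substantive difference.
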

\begin{proof}
    It follows from \ref{p3} and \eqref{def_M1} that
    \[
    \la^p=\frac{\la^{\frac{(2-p)n}{2}}}{2\rho^{n+2}|B_1|}\iint_{Q_{\rho}^\la(z_0)}\left(H(z,|\na u|)+H(z,|F|)\right)\,dz\le \frac{\la^{\frac{(2-p)n}{2}}}{\rho^{n+2}}M_1.
    \]
    Therefore, we have by \eqref{range_pq} that $\rho^\alpha\le M_1^\frac{\alpha}{n+2}\la^{-\frac{\alpha\mu}{n+2}}\le \tfrac{K}{40[a]_\alpha}\la^{-\frac{\alpha\mu}{n+2}}$. Also $\rho^\alpha\la^q\le \tfrac{K}{40[a]_\alpha}\la^p$ follows from \eqref{range_pq}.
\end{proof}
\begin{remark}\label{rmk}
    The range of $q$ is determined to satisfy the second inequality of \eqref{sec6:1544} and this is where the intrinsic deficit appears in the range of $q$. Although it is not mentioned in \cite{KKM}, the same argument holds for the degenerate case.
\end{remark}

\subsection{The $p$-intrinsic case}
In this subsection we show a reverse Hölder inequality in the $p$-intrinsic cylinder $Q_\rho^\la(z_0)$ satisfying \ref{p1}, \ref{p2} and $Q_{2\cv\rho}^\la(z_0)\subset \Om_T$. The scaling deficit $\mu$ defined in \eqref{range_pq} plays a role throughout the argument. In particular, note that $0<p-1-\tfrac{\alpha\mu}{n+2}<1$. We begin by estimating the last term in Lemma~\ref{sec3:lem:3}.   
\begin{lemma}\label{sec4:lem:1}
	For $s\in[2\rho,4\rho]$ and $\theta\in((q-1)/p,1]$, 
	there exists a constant $c=c(n,p,q,\alpha,L,[a]_{\alpha},M_1)$, such that
	\begin{align*}
		\begin{split}
			&\fiint_{Q_{s}^\la(z_0)}(|\na u|^{p-1}+a(z)|\na u|^{q-1}+|F|^{p-1}+a(z)|F|^{q-1})\,dz\\
			&\qquad\le c\fiint_{Q_{s}^\la(z_0)}(|\na u|+|F|)^{p-1}\,dz\\
            &\qquad\qquad
			+c\la^{-1+\frac{p}{q}}\fiint_{Q_{s}^\la(z_0)}a(z)^\frac{q-1}{q}(|\na u|+|F|)^{q-1}\,dz\\
		&\qquad\qquad+c\la^{\frac{\alpha \mu}{n+2}}\left(\fiint_{Q_{s}^\la(z_0)}(|\na u|+|F|)^{\theta p}\,dz\right)^{\frac{1}{\theta}\left(\frac{p-1}{p}-\frac{\alpha\mu}{(n+2)p}\right)}.
		\end{split}
	\end{align*}
\end{lemma}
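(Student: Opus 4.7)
The plan is to reduce the four left-hand side integrands to the three right-hand side quantities via a pointwise upper bound on $a(z)$ over the enlarged cylinder $Q_{4\rho}^\la(z_0)$. The first term on the right is immediate from the elementary pointwise inequality $|\na u|^{p-1} + |F|^{p-1} \le 2(|\na u|+|F|)^{p-1}$, so the actual work concerns the two left-hand terms involving $a(z)$.

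The key step is to show that $a(z) \le C\la^{p-q}$ on $Q_{4\rho}^\la(z_0)$ for a constant $C$ depending on $K$ and $[a]_\al$. Using H\"older continuity \eqref{def_holder} together with the geometry of $Q_s^\la(z_0)$, where $\la^{(p-2)/2}s \le s$ since $\la \ge 1$ and $p \le 2$, gives $|a(z) - a(z_0)| \le 2[a]_\al s^\al \le C[a]_\al \rho^\al$ for $s \in [2\rho, 4\rho]$. Applying Lemma~\ref{lemma_decay} converts this into $|a(z) - a(z_0)| \le C\la^{-\al\mu/(n+2)}$, and the upper bound on $q$ in \eqref{range_pq}, which enforces $q-p \le \al\mu/(n+2)$, together with $\la \ge 1$ yields $\la^{-\al\mu/(n+2)} \le \la^{p-q}$. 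Combined with assumption \ref{p1}, which provides $a(z_0) \le K\la^{p-q}$, this produces the claimed pointwise bound.

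Given the pointwise bound, the second term follows from the factorization $a(z) = a(z)^{1/q} \cdot a(z)^{(q-1)/q}$: estimating $a(z)^{1/q} \le C\la^{(p-q)/q} = C\la^{-1+p/q}$ and using $|\na u|^{q-1} + |F|^{q-1} \le C(|\na u|+|F|)^{q-1}$ recover exactly the second term on the right. The third term arises as an alternative bound for the same $a(z)$-integral: starting from $a(z)(|\na u|+|F|)^{q-1} \le C\la^{p-q}(|\na u|+|F|)^{q-1}$, Jensen's inequality, applicable because $\theta > (q-1)/p$ is equivalent to $\theta p > q-1$, controls $\fiint (|\na u|+|F|)^{q-1}$ by $(\fiint (|\na u|+|F|)^{\theta p})^{(q-1)/(\theta p)}$. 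Splitting the exponent as $q-1 = (p-1-\al\mu/(n+2)) + (q-p+\al\mu/(n+2))$ and using the stopping time condition \ref{p3} to bound $\fiint (|\na u|+|F|)^{\theta p} \le C\la^{\theta p}$ (via Jensen and $\theta \le 1$) absorbs the second factor into $\la^{q-p+\al\mu/(n+2)}$, so that the total $\la$-exponent becomes $\al\mu/(n+2)$.

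The main obstacle is the pointwise bound on $a(z)$, and this is precisely where the upper bound on $q$ in \eqref{range_pq} enters: the admissible range is calibrated so that the decay rate $\la^{-\al\mu/(n+2)}$ from Lemma~\ref{lemma_decay} dominates the intrinsic scale $\la^{p-q}$ determined by the $p$-intrinsic assumption \ref{p1}. All remaining steps are algebraic manipulations via the factorization of $a(z)$, Jensen's inequality, and the intrinsic control on the $L^p$ average of $|\na u|+|F|$.
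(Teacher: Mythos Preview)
Your argument is correct and uses the same ingredients as the paper---assumption \ref{p1}, the decay estimate of Lemma~\ref{lemma_decay}, the range \eqref{range_pq}, and the stopping time bound---but organizes them differently. The paper splits $a(z)\le \inf_{Q_s^\la(z_0)}a + c s^\al$ and handles the two pieces separately: the infimum part yields the second right-hand term via \ref{p1} and the factorization $\inf a = (\inf a)^{1/q}(\inf a)^{(q-1)/q}$, while the oscillation part yields the third term via Lemma~\ref{lemma_decay}, H\"older, and \ref{p4}. You instead combine \ref{p1} and Lemma~\ref{lemma_decay} up front into the single pointwise bound $a(z)\le C\la^{p-q}$ on $Q_{4\rho}^\la(z_0)$, from which \emph{either} the second or the third right-hand term alone already controls the $a(z)$-integrals. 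Your route is marginally cleaner and makes explicit that, under \ref{p1}--\ref{p2}, the third term in the statement is actually redundant; the paper's split, on the other hand, keeps visible which hypothesis is responsible for which term.

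One small correction: in your derivation of the third term you invoke ``the stopping time condition \ref{p3}'' to bound $\fiint_{Q_s^\la(z_0)}(|\na u|+|F|)^{\theta p}\,dz\le c\la^{\theta p}$, but \ref{p3} is the equality at radius $\rho$. Since $s\in[2\rho,4\rho]$ you need \ref{p4} here (as the paper does). With that adjustment the argument is complete.
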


\begin{proof}
	By \eqref{def_holder} there exists a  constant $c=c([a]_{\alpha})$, such that
	\begin{equation}\label{sec4:41}
		\begin{split}
			&\fiint_{Q_{s}^\la(z_0)}(|\na u|^{p-1}+a(z)|\na u|^{q-1}+|F|^{p-1}+a(z)|F|^{q-1})\,dz\\
			&\qquad\le c\fiint_{Q_{s}^\la(z_0)}(|\na u|^{p-1}+|F|^{p-1})\,dz\\
            &\qquad\qquad+c\fiint_{Q_{s}^\la(z_0)}\inf_{w\in Q_{s}^\la(z_0)}a(w)(|\na u|+|F|)^{q-1}\,dz\\
			&\qquad\qquad+cs^\alpha\fiint_{Q_{s}^\la(z_0)}(|\na u|^{q-1}+|F|^{q-1})\,dz.
		\end{split}
	\end{equation}
	We apply \ref{p1} to estimate the second term on the right-hand side of \eqref{sec4:41} and obtain
	\begin{align*}
		\begin{split}
			&\fiint_{Q_{s}^\la(z_0)}\inf_{w\in Q_{s}^\la(z_0)}a(w)(|\na u|+|F|)^{q-1}\,dz\\
			&\qquad\le K^\frac{1}{q}\la^{-1+\frac{p}{q}}\fiint_{Q_{s}^\la(z_0)}\inf_{w\in Q_{s}^\la(z_0)}a(w)^\frac{q-1}{q}(|\na u|+|F|)^{q-1}\,dz\\
			&\qquad\le K^\frac{1}{q}\la^{-1+\frac{p}{q}}\fiint_{Q_{s}^\la(z_0)}a(z)^\frac{q-1}{q}(|\na u|+|F|)^{q-1}\,dz.
		\end{split}
	\end{align*}
	In order to estimate the last term on the right-hand side of \eqref{sec4:41}, note that by \eqref{sec6:1544} we have
    \begin{align}\label{rho<lambda}
        s\leq 4\rho \leq c(M_1,n)\la^\frac{-\mu}{n+2}.
    \end{align}
 As $q-1<p$ by $\al \leq 1 $ and \eqref{range_pq}, it follows from  H\"older's inequality, \eqref{rho<lambda} and \ref{p4} that
	\begin{align*}
		\begin{split}
			&s^\alpha\fiint_{Q_{s}^\la(z_0)}|\na u|^{q-1}\,dz\le s^\alpha\left(\fiint_{Q_{s}^\la(z_0)}|\na u|^{\theta p}\,dz\right)^{\frac{1}{\theta}\frac{q-1}{p}}\\
			&\qquad\le c\la^{-\frac{\al\mu}{n+2}}\left(\fiint_{Q_{s}^\la(z_0)}|\na u|^{p}\,dz\right)^\frac{\al\mu}{p(n+2)}\left(\fiint_{Q_{s}^\la(z_0)}|\na u|^{\theta p}\,dz\right)^{\frac{1}{\theta}\frac{q-1-\frac{\al\mu}{n+2}}{p}}\\
			&\qquad\le c \left(\fiint_{Q_{s}^\la(z_0)}|\na u|^{\theta p}\,dz\right)^{\frac{1}{\theta}\frac{q-1-\frac{\al\mu}{n+2}}{p}},
		\end{split}
	\end{align*}
	where $c=c(n,p,\al,M_1)$ and $\theta\in((q-1)/p,1]$.
	It follows from \ref{p4}, $\la\ge1$ and \eqref{range_pq} that
	\begin{align*}
		\begin{split}
		&\left(\fiint_{Q_s^\la(z_0)}|\na u|^{\theta p}\,dz\right)^{\frac{1}{\theta}\frac{q-1-\frac{\al\mu}{n+2}}{p}}
		\le c\la^{q-p}\left(\fiint_{Q_s^\la(z_0)}|\na u|^{\theta p}\,dz\right)^{\frac{1}{\theta}\frac{p-1-\frac{\al\mu}{n+2}}{p}}\\
		&\qquad\qquad\le c\la^\frac{\al\mu}{n+2}\left(\fiint_{Q_s^\la(z_0)}|\na u|^{\theta p}\,dz\right)^{\frac{1}{\theta}\frac{p-1-\frac{\al\mu}{n+2}}{p}},
		\end{split}
	\end{align*}
	where $c=c(n,p,q,\alpha)$. We conclude that
 \[
     s^\alpha\fiint_{Q_{s}^\la(z_0)}|\na u|^{q-1}\,dz\leq c\la^\frac{\al \mu}{n+2}\left(\fiint_{Q_s^\la(z_0)}|\na u|^{\theta p}\,dz\right)^{\frac{1}{\theta}\left(\frac{p-1}{p}-\frac{\alpha\mu}{p(n+2)}\right)}
 \]
where $c=c(n,p,q,\alpha,M_1)$. Similarly, replacing $|\na u|$ by $|F|$ in the above argument, we have
\[
    s^\alpha\fiint_{Q_{s}^\la(z_0)}|F|^{q-1}\,dz\le c\la^{\frac{\alpha \mu}{n+2}}\left(\fiint_{Q_s^\la(z_0)}|F|^{\theta p}\,dz\right)^{\frac{1}{\theta}\left(\frac{p-1}{p}-\frac{\alpha\mu}{p(n+2)}\right)}.
\]
This completes the proof.
\end{proof}
Next, we provide a $p$-intrinsic parabolic Poincar\'e inequality.
\begin{lemma}\label{sec4:lem:2}
	For $s\in[2\rho,4\rho]$ and $\theta\in((q-1)/p,1]$, 
	there exists a constant $c=c(n,N,p,q,\alpha,L,[a]_{\alpha},M_1)$, such that 
	\begin{equation*}
		\begin{split}
			&\fiint_{Q_{s}^\la(z_0)}\frac{|u-u_{Q_{s}^\la(z_0)}|^{\theta p}}{(\la^\frac{p-2}{2}s)^{\theta p}}\,dz
			\le c\fiint_{Q_{s}^\la(z_0)}H(z,|\na u|)^\theta\,dz\\
			&\qquad+c\la^{\left(2-p+\frac{\alpha \mu}{n+2}\right)\theta p}\left(\fiint_{Q_{s}^\la(z_0)}(|\na u|+|F|)^{\theta p}\,dz\right)^{p-1-\frac{\alpha \mu}{n+2}}.
		\end{split}
	\end{equation*}
\end{lemma}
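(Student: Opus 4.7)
The plan is to apply the parabolic Poincaré inequality of Lemma~\ref{sec3:lem:3} with $m=p$ on $Q_s^\la(z_0)=Q_{R,\ell}(z_0)$, where $R=\la^{(p-2)/2}s$ and $\ell=s^2$, so that $\ell/R^2=\la^{2-p}$. The first resulting term is immediately controlled by $c\fiint_{Q_s^\la(z_0)} H(z,|\na u|)^\theta\,dz$, since $|\na u|^{\theta p}\le H(z,|\na u|)^\theta$ pointwise. The second resulting term has the shape $\la^{(2-p)\theta p}\bigl(\fiint \mathcal{G}\,dz\bigr)^{\theta p}$ where $\mathcal{G}=|\na u|^{p-1}+a(z)|\na u|^{q-1}+|F|^{p-1}+a(z)|F|^{q-1}$; Lemma~\ref{sec4:lem:1} decomposes $\fiint \mathcal{G}\,dz$ into three pieces which must be treated separately and multiplied by $\la^{(2-p)\theta p}$ after raising to power $\theta p$.

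For the third piece from Lemma~\ref{sec4:lem:1}, raising to $\theta p$ and multiplying by $\la^{(2-p)\theta p}$ produces exactly the second term on the right-hand side of the conclusion. For the first piece, Hölder's inequality (valid since $\theta p>p-1$) gives $\bigl(\fiint(|\na u|+|F|)^{p-1}\bigr)^{\theta p}\le\bigl(\fiint(|\na u|+|F|)^{\theta p}\bigr)^{p-1}$; I split the exponent as $p-1=(p-1-\tfrac{\al\mu}{n+2})+\tfrac{\al\mu}{n+2}$ and control the excess factor using the \emph{a priori} bound $\fiint_{Q_s^\la(z_0)}(|\na u|+|F|)^{\theta p}\,dz\le c\la^{\theta p}$, which follows from \ref{p4} and Jensen's inequality. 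This produces precisely the factor $\la^{\al\mu\theta p/(n+2)}$ needed to match the target form.

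The middle piece $\la^{-1+p/q}\fiint a(z)^{(q-1)/q}(|\na u|+|F|)^{q-1}\,dz$ is the delicate one. The key step is to combine the $p$-intrinsic hypothesis \ref{p1} with the Hölder continuity of $a$ and Lemma~\ref{lemma_decay} to obtain the pointwise comparison $a(z)\le c\la^{p-q}$ on $Q_s^\la(z_0)$: since $|a(z)-a(z_0)|\le C[a]_\al(4\rho)^\al\le c\la^{p-q}$ by \eqref{sec6:1544} and $a(z_0)\le K\la^{p-q}$ by \ref{p1}. Substituting this and simplifying exponents reduces the middle piece to $c\la^{p-q}\fiint(|\na u|+|F|)^{q-1}\,dz$, which by Hölder (using $\theta p\ge q-1$) becomes $c\la^{p-q}\bigl(\fiint(|\na u|+|F|)^{\theta p}\bigr)^{(q-1)/(\theta p)}$. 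Raising to $\theta p$ and multiplying by $\la^{(2-p)\theta p}$ yields $c\la^{(2-q)\theta p}\bigl(\fiint(|\na u|+|F|)^{\theta p}\bigr)^{q-1}$. The range condition \eqref{range_pq} now permits the splitting $q-1=(p-1-\tfrac{\al\mu}{n+2})+(q-p+\tfrac{\al\mu}{n+2})$ with non-negative excess; applying $\fiint(|\na u|+|F|)^{\theta p}\le c\la^{\theta p}$ to the excess factor combines with $\la^{(2-q)\theta p}$ to give exactly $c\la^{(2-p+\al\mu/(n+2))\theta p}$, matching the conclusion.

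The main obstacle is this middle piece: a naive Hölder estimate produces an orphan factor $\la^{\theta p}$ with no mechanism to absorb it into either term on the right-hand side. The resolution is precisely the interplay between the $p$-intrinsic hypothesis \ref{p1}, the decay estimate in Lemma~\ref{lemma_decay}, and the sharp range of $q$ in \eqref{range_pq}, which together allow $q$-terms to be traded for $p$-terms at the cost of the singular scaling deficit $\al\mu/(n+2)$.
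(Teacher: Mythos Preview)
Your proposal is correct and follows essentially the same route as the paper: apply Lemma~\ref{sec3:lem:3} with $m=p$, feed in Lemma~\ref{sec4:lem:1}, and reduce each of the three resulting pieces to the target form via H\"older's inequality, \ref{p4}, and the exponent splitting around $p-1-\tfrac{\al\mu}{n+2}$. Your treatment of the middle piece---using \ref{p1} together with the H\"older continuity of $a$ and Lemma~\ref{lemma_decay} to obtain the pointwise bound $a(z)\le c\la^{p-q}$ on $Q_s^\la(z_0)$---makes explicit precisely the step that the paper compresses into the word ``Similarly''.
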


\begin{proof}
	By Lemma~\ref{sec3:lem:3} and Lemma~\ref{sec4:lem:1}, there exists a constant $c=c(n,N,p,q,\alpha,L,[a]_{\alpha},M_1)$, such that
	\begin{equation}\label{sec4:42}
		\begin{split}
			&\fiint_{Q_{s}^\la(z_0)}\frac{|u-u_{Q_{s}^\la(z_0)}|^{\theta p}}{(\la^\frac{p-2}{2}s)^{\theta p}}\,dz
			\le c\fiint_{Q_{s}^\la(z_0)}|\na u|^{\theta p}\,dz\\
			&\qquad+c\left(\la^{2-p}\fiint_{Q_{s}^\la(z_0)}(|\na u|+|F|)^{p-1}\,dz\right)^{\theta p}\\
			&\qquad+c\left(\la^{1-p+\frac{p}{q}}\fiint_{Q_{s}^\la(z_0)}a(z)^\frac{q-1}{q}(|\na u|+|F|)^{q-1}\,dz\right)^{\theta p}\\
			&\qquad+c\la^{\left(2-p+\frac{\alpha \mu}{n+2}\right)\theta p}\left(\fiint_{Q_{s}^\la(z_0)}(|\na u|+|F|)^{\theta p}\,dz\right)^{p-1-\frac{\alpha \mu}{n+2}}.
		\end{split}
	\end{equation}
	To estimate the second term on the right-hand side of \eqref{sec4:42}, we use Hölder's inequality and \ref{p4} to obtain
	\begin{align*}
 \begin{split}
		&\left(\la^{2-p}\fiint_{Q_{s}^\la(z_0)}(|\na u|+|F|)^{p-1}\,dz\right)^{\theta p}\\
  &\qquad\leq \la^{(2-p)\theta p}\left(\fiint_{Q_{s}^\la(z_0)}(|\na u|+|F|)^{\theta p}\,dz\right)^{p-1}\\
  &\qquad= \la^{(2-p)\theta p}\left(\fiint_{Q_{s}^\la(z_0)}(|\na u|+|F|)^{\theta p}\,dz\right)^{\frac{\alpha \mu}{n+2}}\\
  &\qquad\qquad\times\left(\fiint_{Q_{s}^\la(z_0)}(|\na u|+|F|)^{\theta p}\,dz\right)^{p-1-\frac{\alpha \mu}{n+2}}\\
  &\qquad\le c\la^{\left(2-p+\frac{\alpha \mu}{n+2}\right)\theta p}\left(\fiint_{Q_{s}^\la(z_0)}(|\na u|+|F|)^{\theta p}\,dz\right)^{p-1-\frac{\alpha \mu}{n+2}},
  \end{split}
	\end{align*}
where $c=c(n,p)$.
Similarly, the third term on the right-hand side of \eqref{sec4:42} is estimated by
\begin{align*}
\begin{split}
		&\left(\la^{1-p+\frac{p}{q}}\fiint_{Q_{s}^\la(z_0)}a(z)^\frac{q-1}{q}(|\na u|+|F|)^{q-1}\,dz\right)^{\theta p} \\
  &\qquad \leq c\la^{\left(1-p+\frac{p}{q}+\frac{(q-1)(p-q)}{q}\right)\theta p}\left(\fiint_{Q_{s}^\la(z_0)}(|\na u|+|F|)^{\theta p}\,dz\right)^{q-1}\\
  &\qquad \leq c\la^{\left(1-p+\frac{p}{q}+\frac{(q-1)(p-q)}{q}\right)\theta p}\la^{\left(q-p+\frac{\alpha\mu}{n+2}\right)\theta p}\\
  &\qquad\qquad\times \left(\fiint_{Q_{s}^\la(z_0)}(|\na u|+|F|)^{\theta p}\,dz\right)^{p-1-\frac{\alpha\mu}{n+2}}\\
    	&\qquad \le c\la^{\left(2-p+\frac{\alpha \mu}{n+2}\right)\theta p}\left(\fiint_{Q_{s}^\la(z_0)}(|\na u|+|F|)^{\theta p}\,dz\right)^{p-1-\frac{\alpha \mu}{n+2}},
     \end{split}
\end{align*}
where $c=c(n,N,p,q,\alpha,L,[a]_{\alpha},M_1)$.
This finishes the proof.
\end{proof}

\begin{lemma}\label{sec4:lem:3}
	For $s\in[2\rho,4\rho]$ and $\theta\in((q-1)/p,1]$, 
	there exists a constant $c=c(n,N,p,q,\alpha,L,[a]_{\alpha},M_1)$, such that 
	\begin{equation*}
		\begin{split}
			&\fiint_{Q_{s}^\la(z_0)}\inf_{w\in Q_{s}^\la(z_0)}a(w)^{\theta}\frac{|u-u_{Q_{s}^\la(z_0)}|^{\theta q}}{(\la^\frac{p-2}{2}s)^{\theta q}}\,dz\\
			&\qquad\le c\fiint_{Q_{s}^\la(z_0)}H(z,|\na u|)^\theta\,dz\\
			&\qquad\qquad+c\la^{\left(2-p+\frac{\alpha\mu}{n+2}\right)\theta p}\left(\fiint_{Q_{s}^\la(z_0)}(|\na u|+|F|)^{\theta p}\,dz\right)^{p-1-\frac{\alpha\mu}{n+2}}.
		\end{split}
	\end{equation*}
	
\end{lemma}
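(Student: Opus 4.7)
The plan is to imitate the proof of Lemma~\ref{sec4:lem:2}, but with the parabolic Poincar\'e inequality applied at exponent $m = q$ instead of $m = p$, and then to reconcile the resulting $q$-dependent exponents with the $p$-dependent target exponents by using assumption \ref{p1}, the Jensen-reduction of $\theta p$-integrals via \ref{p4}, and the H\"older continuity of $a$ together with the decay estimate of Lemma~\ref{lemma_decay}.

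First I note that by \ref{p1} we have $a(z_0)\le K\la^{p-q}$, and since the infimum cannot exceed the value at $z_0$,
\[
\inf_{w\in Q_s^\la(z_0)} a(w) \le a(z_0) \le K\la^{p-q}.
\]
Moreover, trivially $\inf_{Q_s^\la}a \le a(z)$ for every $z\in Q_s^\la(z_0)$. I then apply Lemma~\ref{sec3:lem:3} with $m=q$ to the cylinder $Q_s^\la(z_0)=Q_{R,\ell}(z_0)$ with $R=\la^{(p-2)/2}s$ and $\ell=s^2$, so that $\ell/R^2=\la^{2-p}$. This controls $\fiint_{Q_s^\la} |u-u_{Q_s^\la}|^{\theta q}/R^{\theta q}\,dz$ by a gradient term and a source-type term, and I estimate each after multiplying by $(\inf a)^\theta$.

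For the gradient term, the pointwise bound $\inf a\le a(z)$ gives
\[
(\inf a)^\theta |\na u|^{\theta q}\le a(z)^\theta|\na u|^{\theta q}=(a(z)|\na u|^q)^\theta\le H(z,|\na u|)^\theta,
\]
which after integration yields the first term of the target right-hand side. For the source-type term, I invoke Lemma~\ref{sec4:lem:1} to split the inner integral into its three pieces; I then raise each piece to $\theta q$, multiply by $(\inf a)^\theta\le K^\theta\la^{(p-q)\theta}$, and use Jensen's inequality to reduce each resulting average to $\fiint_{Q_s^\la}(|\na u|+|F|)^{\theta p}\,dz$. The excess exponent over the target $p-1-\al\mu/(n+2)$ is absorbed into a $\la$-factor via the stopping-time bound $\fiint_{Q_s^\la}(|\na u|+|F|)^{\theta p}\,dz\le c\la^{\theta p}$, which follows from \ref{p4} and Jensen. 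For the middle piece, which contains $a^{(q-1)/q}$, I additionally use the H\"older continuity of $a$ together with Lemma~\ref{lemma_decay} to deduce $a(z)\le c\la^{p-q}$ on $Q_s^\la(z_0)$ and pull this factor out. Finally I apply Lemma~\ref{sec2:lem:2} to absorb any lower-order contribution if needed.

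The main obstacle is the algebraic bookkeeping. Because Poincar\'e is applied at $m=q$, each piece initially produces a $\la$-exponent differing from the target $(2-p+\al\mu/(n+2))\theta p$ and an integral exponent $q/p$ times the target $p-1-\al\mu/(n+2)$. The two discrepancies cancel exactly after the Jensen reduction and the use of \ref{p4}, but only thanks to the restriction $q\le p+\al\mu/(n+2)$ in \eqref{range_pq}, which via Lemma~\ref{lemma_decay} supplies the estimate $s^\al\la^{q-p}\le c$ needed to close the computation. Verifying this cancellation is where the detailed calculation lives, and it parallels (and in fact extends) the bookkeeping carried out in the proof of Lemma~\ref{sec4:lem:2}.
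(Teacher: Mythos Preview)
Your proposal is correct and follows essentially the same approach as the paper's proof: apply Lemma~\ref{sec3:lem:3} with $m=q$ together with Lemma~\ref{sec4:lem:1}, then reduce each of the resulting pieces to the target form via \ref{p1} and \ref{p4}. The only differences are cosmetic---your pointwise bound $\inf a\le a(z)$ for the gradient term and your use of $a(z)\le c\la^{p-q}$ on $Q_s^\la(z_0)$ for the $a^{(q-1)/q}$-piece are minor variants of what the paper does---and note that Lemma~\ref{sec2:lem:2} (the iteration lemma) plays no role here and should be dropped.
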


\begin{proof}
		By Lemma~\ref{sec3:lem:3} and Lemma~\ref{sec4:lem:1}, there exists a constant $c=c(n,N,p,q,\alpha,L,[a]_{\alpha},M_1)$, such that
	\begin{equation}\label{sec4:43}
		\begin{split}
			&\fiint_{Q_{s}^\la(z_0)}\inf_{w\in Q_{s}^{\la}(z_0)}a(w)^{\theta}\frac{|u-u_{Q_{s}^\la(z_0)}|^{\theta q}}{(\la^\frac{p-2}{2}s)^{\theta q}}\,dz \\
			&\qquad\le c\fiint_{Q_{s}^\la(z_0)}\inf_{w\in Q_{s}^\la(z_0)}a(w)^{\theta}|\na u|^{\theta q}\,dz\\
			&\qquad\qquad+ c\inf_{w\in Q_{s}^{\la}(z_0)}a(w)^{\theta}\left(\la^{2-p}\fiint_{Q_{s}^\la(z_0)}(|\na u|+|F|)^{p-1}\,dz\right)^{\theta q}\\
			&\qquad\qquad+c\inf_{w\in Q_{s}^{\la}(z_0)}a(w)^{\theta}\left(\la^{1-p+\frac{p}{q}}\fiint_{Q_{s}^\la(z_0)}a(z)^\frac{q-1}{q}(|\na u|+|F|)^{q-1}\,dz\right)^{\theta q}\\
			&\qquad\qquad+c\inf_{w\in Q_{s}^\la(z_0)}a(w)^{\theta}\la^{\left(2-p+\frac{\alpha\mu}{n+2}\right)\theta q}\\
   &\qquad\qquad\qquad\times\left(\fiint_{Q_{s}^\la(z_0)}(|\na u|+|F|)^{\theta p}\,dz\right)^{q\left(\frac{p-1}{p}-\frac{\alpha\mu}{(n+2)p}\right)}.
		\end{split}
	\end{equation}
	By \ref{p1} and \ref{p4}, we obtain for the second term on the right-hand side of \eqref{sec4:43} that
\begin{align*}
	\begin{split}
		&\inf_{w\in Q_{s}^\la(z_0)}a(w)^{\theta}\la^{(2-p)\theta q}\left(\fiint_{Q_{s}^\la(z_0)}(|\na u|+|F|)^{\theta p}\,dz\right)^{\frac{(p-1)q}{p}}\\
        &\qquad=\inf_{w\in Q_{s}^\la(z_0)}a(w)^{\theta}\la^{(2-p)\theta q}\\
        &\qquad\qquad\times\left(\fiint_{Q_{s}^\la(z_0)}(|\na u|+|F|)^{\theta p}\,dz\right)^{\frac{(p-1)(q-p)}{p}+\frac{\alpha\mu}{n+2}+p-1-\frac{\alpha\mu}{n+2}}\\
		&\qquad\le K^\theta\la^{(p-q)\theta}\la^{(2-p)\theta q+(p-1)(q-p)\theta+\frac{\alpha\mu}{n+2}\theta p}\\
        &\qquad\qquad\times \left(\fiint_{Q_{s}^\la(z_0)}(|\na u|+|F|)^{\theta p}\,dz\right)^{p-1-\frac{\alpha\mu}{n+2}}\\
		&\qquad\le c\la^{\left(2-p+\frac{\alpha\mu}{n+2}\right)\theta p}\left(\fiint_{Q_{s}^\la(z_0)}(|\na u|+|F|)^{\theta p}\,dz\right)^{p-1-\frac{\alpha\mu}{n+2}},
	\end{split}
\end{align*}
where $c=c(n,p,\alpha,[a]_{\alpha},M_1)$.
Similarly, the third and the fourth terms on the right-hand side  of \eqref{sec4:43} can be estimated by
\begin{equation*}
	\begin{split}
		&\inf_{w\in Q_{s}^\la(z_0)}a(w)^{\theta}\la^{(p+q-pq)\theta }\left(\fiint_{Q_{s}^\la(z_0)}(a(z)(|\na u|^q+|F|^q))^{\theta}\,dz\right)^{q-1}\\
  &\qquad\le\inf_{w\in Q_{s}^\la(z_0)}a(w)^{\theta}\la^{(p+q-pq)\theta }\la^{(q-p+\frac{\alpha\mu}{n+2})p\theta}\\
  &\qquad\qquad\times\left(\fiint_{Q_{s}^\la(z_0)}(a(z)(|\na u|^q+|F|^q))^{\theta}\,dz\right)^{p-1-\frac{\alpha\mu}{n+2}}\\
		&\qquad\le c\la^{\left(2-p+\frac{\alpha\mu}{n+2}\right)\theta p}\left(\fiint_{Q_{s}^\la(z_0)}(|\na u|+|F|)^{\theta p}\,dz\right)^{p-1-\frac{\alpha\mu}{n+2}}
	\end{split}
\end{equation*}
and
\begin{equation*}
	\begin{split}
		&\inf_{w\in Q_{s}^\la(z_0)}a(w)^{\theta}\la^{\left(2-p+\frac{\alpha\mu}{n+2}\right)\theta q}\left(\fiint_{Q_{s}^\la(z_0)}(|\na u|+|F|)^{\theta p}\,dz\right)^{q\left(\frac{p-1}{p}-\frac{\alpha\mu }{(n+2)p}\right)}\\
		&\qquad\le c\la^{\left(2-p+\frac{\alpha\mu}{n+2}\right)\theta p}\left(\fiint_{Q_{s}^\la(z_0)}(|\na u|+|F|)^{\theta p}\,dz\right)^{p-1-\frac{\alpha\mu}{n+2}}.
	\end{split}
\end{equation*}
	The conclusion follows from H\"older's inequality.
\end{proof}

In the following lemma we estimate the quadratic term 	
\[
		S(u,Q_{\rho}^\la(z_0))=\sup_{I_{\rho}(t_0)}\fint_{B^\la_{\rho}(x_0)}\frac{|u-u_{Q_{\rho}^\la(z_0)}|^2}{\left(\la^\frac{p-2}{2}\rho\right)^2}\,dx.
	\]
 
\begin{lemma}\label{sec5:lem:1}
	There exists a constant $c=c(\data)$, such that 
	\[
		S(u,Q_{2\rho}^\la(z_0))=\sup_{I_{2\rho}(t_0)}\fint_{B^\la_{2\rho}(x_0)}\frac{|u-u_{Q_{2\rho}^\la(z_0)}|^2}{\left(2\la^\frac{p-2}{2}\rho\right)^2}\,dx\le c\la^2.
	\]	
\end{lemma}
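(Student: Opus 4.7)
The plan is to invoke the Caccioppoli inequality (Lemma~\ref{sec3:lem:1}) on a slightly larger cylinder and estimate each term on the right-hand side using the $p$-intrinsic Sobolev--Poincaré inequalities (Lemmas~\ref{sec4:lem:2} and \ref{sec4:lem:3}), the stopping-time conditions (p-i)--(p-ii), and the decay estimate from Lemma~\ref{lemma_decay}. Concretely, I apply Lemma~\ref{sec3:lem:1} with inner data $r=2\lambda^{(p-2)/2}\rho$, $\tau=4\rho^2$, and outer data $R=s\lambda^{(p-2)/2}$, $\ell=s^2$ for a parameter $s\in[2\rho,4\rho]$ so that the outer cylinder is $Q_s^\lambda(z_0)$. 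Dividing both sides by $\lambda^{p-2}$ converts the $1/\tau$ normalization on the left into the $1/(2\lambda^{(p-2)/2}\rho)^2$ normalization in the definition of $S(u,Q_{2\rho}^\lambda(z_0))$.

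On the right-hand side there are four contributions to control. For the $p$-term $\fiint |u-u_{Q_s^\lambda}|^p/(R-r)^p\,dz$, I apply Lemma~\ref{sec4:lem:2} with $\theta=1$; the stopping-time bound \ref{p4} controls both $\fiint H(z,|\nabla u|)\,dz$ and $\fiint(|\nabla u|+|F|)^p\,dz$ by $\lambda^p$, and a direct exponent check shows that the factor $\lambda^{(2-p+\alpha\mu/(n+2))p}\cdot(\lambda^p)^{p-1-\alpha\mu/(n+2)}$ collapses exactly to $\lambda^p$. For the $q$-term $\fiint a(z)|u-u_{Q_s^\lambda}|^q/(R-r)^q\,dz$, I split $a(z)=\inf_{Q_s^\lambda}a+(a(z)-\inf_{Q_s^\lambda}a)$; the first piece is handled by Lemma~\ref{sec4:lem:3} with $\theta=1$, while the second piece uses the Hölder control $a(z)-\inf a\le [a]_\alpha(\lambda^{(p-2)/2}s)^\alpha$ combined with Lemma~\ref{sec3:lem:3} (at exponent $q$) and the second inequality in \eqref{sec6:1544} to absorb the $s^\alpha\lambda^q$ factor into $\lambda^p$. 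The source term $\fiint H(z,|F|)\,dz$ is immediately $\le\lambda^p$ by \ref{p4}.

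The genuinely delicate contribution is the quadratic term $\fiint |u-u_{Q_s^\lambda}|^2/(\ell-\tau)\,dz$, which cannot be compared directly to $|u-u_{Q_s^\lambda}|^p$ when $p\le2$. Here I interpolate via the Gagliardo--Nirenberg inequality (Lemma~\ref{sec2:lem:1}) with $\sigma=2$, $s=p$ and $r=2$, which is admissible precisely because $p>2n/(n+2)$, and combine it with Lemma~\ref{sec3:lem:3} applied at exponent $p$. This produces one factor controlled by the previous cases and one factor of the form $\sup_{t\in I_s}\fint_{B_s^\lambda}|u-u_{Q_s^\lambda}|^2\,dx$. The latter is bounded by $c\,M_2^{2}/|B_s^\lambda|$ via the global bound $\|u\|_{L^\infty(0,T;L^2(\Omega))}\le M_2$ from the $\data$ list, and the resulting combination, after using $\lambda\ge 1$ and \eqref{rho<lambda}, yields a contribution bounded again by $\lambda^p$ (once divided by the outer normalization $\ell-\tau\sim\rho^2$).

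Putting everything together, all four Caccioppoli contributions are at most $c\lambda^p$, so after multiplication by $\lambda^{2-p}$ we obtain $S(u,Q_{2\rho}^\lambda(z_0))\le c\lambda^2$. The main obstacle is the quadratic $L^2$-term: because of the singular range $p\le2$ a crude Hölder bound loses control, and one must use Gagliardo--Nirenberg together with the a priori $L^\infty_tL^2_x$ norm $M_2$. If the Gagliardo--Nirenberg step reintroduces a $\sup_t\fint|u-u_{Q_s^\lambda}|^2$ factor of exponent less than one on the right, I would set $h(s):=S(u,Q_s^\lambda(z_0))$ for $s\in[2\rho,4\rho]$, derive a recursive inequality of the form $h(r_1)\le\vartheta h(r_2)+\cdots+c\lambda^2$, and close the estimate using the iteration Lemma~\ref{sec2:lem:2}.
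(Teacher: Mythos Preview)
Your outline has two genuine gaps, and the more serious one is not covered by your fallback.

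\textbf{The $q$-oscillation term.} You propose to handle $[a]_\alpha s^\alpha\fiint|u-u_{Q_s^\lambda}|^q/(\la^{\frac{p-2}{2}}s)^q\,dz$ by applying Lemma~\ref{sec3:lem:3} at exponent $q$. But that lemma with $\theta m=q$ forces $m=q$, $\theta=1$, and puts $\fiint_{Q_s^\lambda}|\nabla u|^q\,dz$ on the right-hand side. In the $p$-intrinsic regime you only control $\fiint(|\nabla u|^p+a(z)|\nabla u|^q)\,dz$; where $a$ vanishes, $|\nabla u|^q$ need not even be integrable, so there is no way to ``absorb the $s^\alpha\lambda^q$ factor into $\lambda^p$.'' The paper avoids this by applying Lemma~\ref{sec2:lem:1} with $\sigma=q$, $s=p$, $r=2$, $\vartheta=p/q$, which trades $|u-u_Q|^q$ for the product of $\fiint(|u-u_Q|^p/R^p+|\nabla u|^p)\,dz$ and $S(u,Q_{\rho_2}^\lambda)^{(q-p)/2}$; the former is controlled by $\lambda^p$ via Lemma~\ref{sec4:lem:2} and \ref{p4}, and the $S$-factor is dealt with by iteration.

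\textbf{The quadratic term.} Your primary route---bounding $\sup_t\fint_{B_s^\lambda}|u-u_{Q_s^\lambda}|^2\,dx$ by $cM_2^2/|B_s^\lambda|$---does not close: the stopping radius $\rho_w$ comes from Lebesgue differentiation and has no lower bound, so $|B_s^\lambda|^{-1}$ is uncontrolled. The decay bound \eqref{rho<lambda} only gives an \emph{upper} bound for $\rho$, which goes the wrong way here. Your fallback (keep the $S^{1/2}$ factor and iterate via Lemma~\ref{sec2:lem:2}) is exactly what the paper does, and it is not optional.

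In short, the iteration is needed from the outset, with both radii $\rho_1,\rho_2\in[2\rho,4\rho]$ varying: the $q$-oscillation contributes a term $c\lambda^{2p-q}S(u,Q_{\rho_2}^\lambda)^{(q-p)/2}$ and the quadratic term contributes $c\lambda^{p-1}S(u,Q_{\rho_2}^\lambda)^{1/2}$; Young's inequality plus Lemma~\ref{sec2:lem:2} then yield the claim. Also note that the H\"older oscillation of $a$ over $Q_s^\lambda$ is of order $s^\alpha$ (the time direction dominates), not $(\lambda^{(p-2)/2}s)^\alpha$.
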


\begin{proof}
	Let $2\rho\le \rho_1<\rho_2\le 4\rho$. By Lemma~\ref{sec3:lem:1}, there exists a constant $c=c(n,p,q,\nu,L)$, such that
	\begin{align}\label{sec5:3}
		\begin{split}
			&\la^{p-2}S(u,Q_{\rho_1}^\la(z_0))\\
			&\qquad\le \frac{c\rho_2^q}{(\rho_2-\rho_1)^q}\fiint_{Q_{\rho_2}^\la(z_0)}\left(\frac{|u-u_{Q_{\rho_2}^\la(z_0)}|^p}{\left(\la^\frac{p-2}{2}\rho_2\right)^p}+a(z)\frac{|u-u_{Q_{\rho_2}^\la(z_0)}|^q}{\left(\la^\frac{p-2}{2}\rho_2\right)^q}\right)\,dz\\
			&\qquad\qquad+\frac{c \rho_2^2}{(\rho_2-\rho_1)^2}\fiint_{Q_{\rho_2}^\la(z_0)}\frac{|u-u_{Q_{\rho_2}^\la(z_0)}|^2}{\rho_2^2}\,dz+c\fiint_{Q_{\rho_2}^\la(z_0)}H(z,|F|)\,dz.
		\end{split}
	\end{align}
   
   We estimate the first term on the right-hand side of \eqref{sec5:3}. From Lemma~\ref{sec4:lem:2} and \ref{p4}, we obtain 
	\begin{align}\label{sec5:4}
		\fiint_{Q_{\rho_2}^\la(z_0)}\frac{|u-u_{Q_{\rho_2}^\la(z_0)}|^{ p}}{\left(\la^\frac{p-2}{2}\rho_2\right)^{p}}\,dz\le c\la^{p},
	\end{align}
 where $c=c(n,N,p,q,\alpha,L,[a]_{\alpha},M_1)$.
On the other hand, we observe that
\begin{align}\label{51}
	\begin{split}
		&\fiint_{Q_{\rho_2}^\la(z_0)}a(z)\frac{|u-u_{Q_{\rho_2}^\la(z_0)}|^q}{\left(\la^\frac{p-2}{2}\rho_2\right)^q}\,dz\\
		&\qquad\le \fiint_{Q_{\rho_2}^\la(z_0)}\inf_{w\in 
			Q_{\rho_2}^\la(z_0)}a(w)\frac{|u-u_{Q_{\rho_2}^\la(z_0)}|^q}{\left(\la^\frac{p-2}{2}\rho_2\right)^q}\,dz\\
		&\qquad\qquad+[a]_\alpha\rho_2^\alpha\fiint_{Q_{\rho_2}^\la(z_0)}\frac{|u-u_{Q_{\rho_2}^\la(z_0)}|^q}{\left(\la^\frac{p-2}{2}\rho_2\right)^q}\,dz.
	\end{split}
\end{align}
	By  Lemma~\ref{sec4:lem:3} and \ref{p4}, we have
	\[
		\fiint_{Q_{\rho_2}^\la(z_0)}\inf_{w\in Q_{\rho_2}^\la(z_0)}a(w) \frac{|u-u_{Q_{\rho_2}^\la(z_0)}|^{ q}}{\left(\la^\frac{p-2}{2}\rho_2\right)^{q}}\,dz\le c\la^p,
	\]
 where $c=c(n,N,p,q,\alpha,L,[a]_{\alpha},M_1)$.
	For the other term in \eqref{51}, we obtain from Lemma~\ref{sec2:lem:1} with $\sig=q$, $s=p$, $r=2$ and $\vartheta=\tfrac{p}{q}$, 
  that
	\begin{align*}
		\begin{split}
		&\rho_2^\alpha\fiint_{Q_{\rho_2}^\la(z_0)}\frac{|u-u_{Q_{\rho_2}^\la(z_0)}|^q}{\left(\la^\frac{p-2}{2}\rho_2\right)^q}\,dz\\
			&\qquad\le c\rho_2^\alpha\fiint_{Q_{\rho_2}^\la(z_0)}\left(\frac{|u-u_{Q_{\rho_2}^\la(z_0)}|^{p}}{\left(\la^\frac{p-2}{2}\rho_2\right)^{p}}+|\na u|^{ p}\right)\,dz \left(S(u,Q_{\rho_2}^\la(z_0))\right)^\frac{q- p}{2},
		\end{split}
	\end{align*}
 where $c=c(n,q)$. We have by \eqref{sec5:4} and \ref{p4} and \eqref{sec6:1544} that
\[
    \rho_2^\al \fiint_{Q_{\rho_2}^\la(z_0)}\left(\frac{|u-u_{Q_{\rho_2}^\la(z_0)}|^{p}}{\left(\la^\frac{p-2}{2}\rho_2\right)^{p}}+|\na u|^{ p}\right)\,dz \leq c\la^{p-\frac{\al\mu}{n+2}}\leq c\la^{2p-q}, 
\]
 where the last inequality follows from \eqref{range_pq} and $c=c(n,N,p,q,\alpha,L,[a]_{\alpha},M_1)$. We conclude that
\[
    \rho_2^\alpha\fiint_{Q_{\rho_2}^\la(z_0)}\frac{|u-u_{Q_{\rho_2}^\la(z_0)}|^q}{\left(\la^\frac{p-2}{2}\rho_2\right)^q}\,dz \leq c\la^{2p-q}\left(S(u,Q_{\rho_2}^\la(z_0))\right)^\frac{q- p}{2},
\]
where $c=c(n,N,p,q,\alpha,L,[a]_{\alpha},M_1)$.  
    
   Next, we estimate the second term on the right-hand side of~\eqref{sec5:3}. Using Lemma~\ref{sec2:lem:1} with $\sigma =2, s=p, r=2, \vartheta=1/2$, and then \eqref{sec5:4} and \ref{p4}, we have
	\begin{align*}
		\begin{split}
			&\fiint_{Q_{\rho_2}^\la(z_0)}\frac{|u-u_{Q_{\rho_2}^\la(z_0)}|^2}{\rho_2^2}\,dz = \la^{p-2}\fiint_{Q_{\rho_2}^\la(z_0)}\frac{|u-u_{Q_{\rho_2}^\la(z_0)}|^2}{\left(\la^\frac{p-2}{2}\rho_2\right)^2}\,dz\\
			&\qquad\leq \la^{p-2}\fint_{I_{\rho_2}(t_0)}\left(\fint_{B^\la_{\rho_2}(x_0)}\frac{|u-u_{Q_{\rho_2}^\la(z_0)}|^p}{\left(\la^\frac{p-2}{2}\rho_2\right)^p}+|\na u|^p\,dx\right)^\frac{1}{p}\\
            &\qquad\qquad\qquad\times\left(\fint_{B^\la_{\rho_2}(x_0)}\frac{|u-u_{Q_{\rho_2}^\la(z_0)}|^2}{\left(\la^\frac{p-2}{2}\rho_2\right)^2}\,dx\right)^\frac{1}{2}\,dt\\
			&\qquad\le c\la^{p-1} \left(S(u,Q_{\rho_2}^\la(z_0))\right)^\frac{1}{2},
		\end{split}
	\end{align*}
 where $c=c(n,N,p,q,\alpha,L,[a]_{\alpha},M_1)$. Observe that by $p> \tfrac{2n}{n+2}$ it was possible to use these parameters in Lemma~\ref{sec2:lem:1} as 
\[
    \frac{n}{2}+\frac{1}{2}-\frac{n}{2p}-\frac{n}{4}\geq\frac{n}{2}+\frac{1}{2}-\frac{n+2}{4}-\frac{n}{4}=0. 
\]
 
   For the last term on the right-hand side of \eqref{sec5:3} we obtain by \ref{p4} that
	\[
		\fiint_{Q_{\rho_2}^\la(z_0)}H(z,|F|)\,dz\le \la^p.
	\]
	Combining the estimates, we conclude from \eqref{sec5:3} that
	\begin{align*}
		\begin{split}
			&S(u,Q_{\rho_1}^\la(z_0))\le c\frac{\rho_2^q}{(\rho_2-\rho_1)^q}\la^2+c\frac{\rho_2^q}{(\rho_2-\rho_1)^q}\la^{p-q+2}\left(S(u,Q_{\rho_2}^\la(z_0))\right)^\frac{q- p}{2}\\
   &\qquad \qquad +c\frac{ \rho_2^2}{(\rho_2-\rho_1)^2}\la\ S(u,Q_{\rho_2}^\la(z_0))^\frac{1}{2},
		\end{split}
	\end{align*}
 where $c=c(n,N,p,q,\alpha,L,[a]_{\alpha},M_1)$.
    Finally, we  apply Young's inequality twice, with conjugate pairs $(2,2)$ and $(\tfrac{2}{q-p},\tfrac{2}{2-q+p})$, to obtain
    \begin{align*}
    	\begin{split}
    		&S(u,Q_{\rho_1}^\la(z_0))\le \frac{1}{2}S(u,Q_{\rho_2}^\la(z_0))\\
            &\qquad\qquad+ c\left(\frac{\rho_2^q}{(\rho_2-\rho_1)^q}+\frac{\rho_2^\frac{2q}{2-q+p}}{(\rho_2-\rho_1)^\frac{2q}{2-q+p}}+\frac{ \rho_2^4}{(\rho_2-\rho_1)^4}\right)\la^2.
    	\end{split}
    \end{align*}
	The proof is concluded by an application of Lemma~\ref{sec2:lem:2}.	
\end{proof}

Next, we prove an estimate for the first term on the right-hand side of the energy estimate in Lemma~\ref{sec3:lem:1} by using Lemma~\ref{sec2:lem:1}.
\begin{lemma}\label{sec5:lem:2}
	There exist constants $c=c(\data)$ and $\theta_0=\theta_0(n,p,q)\in(0,1)$, such that for any $\theta\in(\theta_0,1)$ we have
	\begin{align*}
		\begin{split}
			&\fiint_{Q_{2\rho}^\la(z_0)}\left(\frac{|u-u_{Q_{2\rho}^\la(z_0)}|^p}{(2\la^\frac{p-2}{2}\rho)^p}+a(z)\frac{|u-u_{Q_{2\rho}^\la(z_0)}|^q}{(2\la^\frac{p-2}{2}\rho)^q}\right)\,dz\\
			&\qquad\le  c\fiint_{Q_{2\rho}^\la(z_0)}\left(\frac{|u-u_{Q_{2\rho}^\la(z_0)}|^{\theta p}}{(2\la^\frac{p-2}{2}\rho)^{\theta p}}+|\na u|^{\theta p}\right)\,dz\left(S(u,Q_{2\rho}^\la(z_0))\right)^\frac{(1-\theta )p}{2}\\
			&\qquad\qquad+c  \fiint_{Q_{2\rho}^\la(z_0)}\inf_{w\in Q_{2\rho}^\la(z_0)}a(w)^{\theta}\left(\frac{|u-u_{Q_{2\rho}^\la(z_0)}|^{\theta q}}{(2\la^\frac{p-2}{2}\rho)^{\theta q}}+|\na u|^{\theta q}\right)\,dz\\
			&\qquad\qquad\qquad\times\la^{(p-q)(1-\theta)}\left(S(u,Q_{2\rho}^\la(z_0))\right)^\frac{(1-\theta)q}{2}.
		\end{split}
	\end{align*}
\end{lemma}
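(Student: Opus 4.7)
The plan is to apply the Gagliardo--Nirenberg interpolation of Lemma~\ref{sec2:lem:1} slice-wise on the spatial ball $B^\la_{2\rho}(x_0)$ at each fixed time and then integrate in $t$, exploiting that the spatial $L^2$-average is bounded uniformly by $S(u,Q_{2\rho}^\la(z_0))$ by definition. Writing $\tilde\rho = 2\la^{(p-2)/2}\rho$ and $\overline u = u_{Q_{2\rho}^\la(z_0)}$ for brevity, for the pure $p$-term on the left-hand side I would apply Lemma~\ref{sec2:lem:1} with $v = u(\cdot,t)-\overline u$ and the choice $\sigma = p$, $s = \theta p$, $r = 2$, $\vartheta = \theta$. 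The compatibility condition of Lemma~\ref{sec2:lem:1} simplifies to $\theta \ge n/(n+2)$, the first interpolation exponent equals $1$ and the second equals $(1-\theta)p/2$. Integrating in $t$ and pulling the spatial $L^2$-factor out as $S(u,Q_{2\rho}^\la(z_0))$ yields exactly the first term on the right-hand side.

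For the $q$-term I would split the coefficient as
\[
a(z) = \inf_{w \in Q_{2\rho}^\la(z_0)} a(w) + \Bigl(a(z) - \inf_{w \in Q_{2\rho}^\la(z_0)} a(w)\Bigr).
\]
For the constant $\inf a$ part I apply Lemma~\ref{sec2:lem:1} with $\sigma = q$, $s = \theta q$, $r = 2$, $\vartheta = \theta$, which produces the factor $S^{(1-\theta)q/2}$ after integration in $t$. Assumption~\ref{p1} together with $\inf_w a(w) \le a(z_0) \le K\la^{p-q}$ then allows the splitting
\[
\inf_w a(w) = (\inf_w a(w))^\theta \cdot (\inf_w a(w))^{1-\theta} \le c\,(\inf_w a(w))^\theta\, \la^{(p-q)(1-\theta)},
\]
so placing $(\inf_w a(w))^\theta$ inside the integral and $\la^{(p-q)(1-\theta)}$ outside produces exactly the second term on the right-hand side.

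For the Hölder-remainder part, parabolic Hölder continuity of $a$ gives $a(z) - \inf_w a(w) \le c[a]_\al\rho^\al$ on $Q_{2\rho}^\la(z_0)$ (since $p \le 2$ keeps the spatial radius $\la^{(p-2)/2}(2\rho)$ below $2\rho$), and Lemma~\ref{lemma_decay} upgrades this to $[a]_\al\rho^\al \le c\la^{p-q}$. To route the resulting quantity $c\la^{p-q}\fiint_{Q_{2\rho}^\la(z_0)}|u-\overline u|^q/\tilde\rho^q\,dz$ into the first right-hand side term (which carries $|\nabla u|^{\theta p}$ and $S^{(1-\theta)p/2}$ rather than $|\nabla u|^{\theta q}$ and $S^{(1-\theta)q/2}$), I apply Lemma~\ref{sec2:lem:1} with the non-standard choice $\sigma = q$, $s = \theta p$, $r = 2$, $\vartheta = \theta p/q$. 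The compatibility condition becomes $\theta \ge nq/((n+2)p)$; this is the stronger of the two constraints and therefore defines $\theta_0 = \theta_0(n,p,q)$, which lies strictly in $(0,1)$ by the range of $q$ in \eqref{range_pq}. The resulting factor $S^{(q-\theta p)/2}$ is decomposed as $S^{(q-p)/2}\cdot S^{(1-\theta)p/2}$, and Lemma~\ref{sec5:lem:1} bounds $S^{(q-p)/2} \le c\la^{q-p}$, which exactly cancels the $\la^{p-q}$ prefactor and leaves the first right-hand side term.

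The main obstacle is precisely this last step: a Hölder-error term that is natively of order $q$ must be transported into the structure of the first right-hand side term, which is of order $\theta p$ with weight $S^{(1-\theta)p/2}$. This forces the non-standard Gagliardo--Nirenberg exponent $\vartheta = \theta p/q$, and it relies crucially on the a priori bound $S \le c\la^2$ from Lemma~\ref{sec5:lem:1} to absorb the $\la^{q-p}$ surplus. The stronger constraint $\theta \ge nq/((n+2)p)$ that arises here is exactly what dictates the threshold $\theta_0$ in the statement.
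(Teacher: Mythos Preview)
Your proof is correct and follows the paper's structure exactly: the same splitting $a(z) = \inf_w a(w) + (a(z)-\inf_w a(w))$, the same three applications of Lemma~\ref{sec2:lem:1} with parameters $(\sigma,s,r,\vartheta)$ equal to $(p,\theta p,2,\theta)$, $(q,\theta q,2,\theta)$, and $(q,\theta p,2,\theta p/q)$ respectively, and the same threshold $\theta_0 = nq/((n+2)p)$.

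The one divergence is in disposing of the leftover factor $(2\rho)^\alpha S^{(q-p)/2}$ from the H\"older-remainder term. You bound $[a]_\alpha\rho^\alpha \le c\lambda^{p-q}$ via the second inequality of Lemma~\ref{lemma_decay} and $S^{(q-p)/2} \le c\lambda^{q-p}$ via Lemma~\ref{sec5:lem:1}, so the two $\lambda$-powers cancel. The paper instead keeps $(2\rho)^\alpha$ intact, unpacks $S$ in terms of $M_2 = \|u\|_{L^\infty(0,T;L^2(\Omega))}$ through $\sup\fint_{B^\la_{2\rho}}|u|^2/\tilde\rho^2 \le c M_2^2\,\tilde\rho^{-(n+2)}$, and then uses the first inequality of Lemma~\ref{lemma_decay} together with an explicit exponent check (relying on $q-p\le \alpha\mu/(n+2)$) to see that the combined $\lambda$-power is nonpositive. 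Your route is shorter and cleanly exploits that Lemma~\ref{sec5:lem:1} is already available; the paper's route makes the dependence on $M_2$ and on the range condition~\eqref{range_pq} explicit at this step, but is otherwise equivalent.
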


\begin{proof}
	By \eqref{def_holder} we obtain
	\begin{equation}\label{sec5:41}
		\begin{split}
			&\fiint_{Q_{2\rho}^\la(z_0)}\left(\frac{|u-u_{Q_{2\rho}^\la(z_0)}|^p}{(2\la^\frac{p-2}{2}\rho)^p}+a(z)\frac{|u-u_{Q_{2\rho}^\la(z_0)}|^q}{(2\la^\frac{p-2}{2}\rho)^q}\right)\,dz\\
			&\qquad\le\fiint_{Q_{2\rho}^\la(z_0)}\frac{|u-u_{Q_{2\rho}^\la(z_0)}|^p}{(2\la^\frac{p-2}{2}\rho)^p}\,dz\\
            &\qquad\qquad+\fiint_{Q_{2\rho}^\la(z_0)}\inf_{w\in Q_s^\la(z_0)}a(w)\frac{|u-u_{Q_{2\rho}^\la(z_0)}|^q}{(2\la^\frac{p-2}{2}\rho)^q}\,dz\\
			&\qquad\qquad+[a]_{\alpha}(2\rho)^\alpha\fiint_{Q_{2\rho}^\la(z_0)}\frac{|u-u_{Q_{2\rho}^\la(z_0)}|^q}{(2\la^\frac{p-2}{2}\rho)^q}\,dz.
		\end{split}
	\end{equation}
	
	We begin with the first term on the right-hand side of \eqref{sec5:41}. The condition in Lemma~\ref{sec2:lem:1} with  $\sig=p$, $s=\theta p$, $r=2$ and $\vartheta = \theta$ is satisfied for $\theta\in(n/(n+2),1)$, and we obtain
	\begin{align*}
 \begin{split}
			&\fiint_{Q_{2\rho}^\la(z_0)}\frac{|u-u_{Q_{2\rho}^\la(z_0)}|^p}{(2\la^\frac{p-2}{2}\rho)^p}\,dz\\
			&\qquad\le c\fiint_{Q_{2\rho}^\la(z_0)}\left(\frac{|u-u_{Q_{2\rho}^\la(z_0)}|^{\theta p}}{(2\la^\frac{p-2}{2}\rho)^{\theta p}}+|\na u|^{\theta p}\right)\,dz\left(S(u,Q_{2\rho}^\la(z_0))\right)^{\frac{(1-\theta)p}{2}},
		\end{split}
	\end{align*}
 where $c=c(n,p)$.

	For the second term on the right-hand side of \eqref{sec5:41}, we  apply Lemma~\ref{sec2:lem:1} with $\sig=q$, $s=\theta q$, $\vartheta = \theta$ and $r=2$. Again the condition of the lemma holds for $\theta\in(n/(n+2),1)$. We obtain
	\begin{align*}
		\begin{split}
			&\fiint_{Q_{2\rho}^\la(z_0)}\inf_{w\in Q_{2\rho}^\la(z_0)}a(w)\frac{|u-u_{Q_{2\rho}^\la(z_0)}|^q}{(2\la^\frac{p-2}{2}\rho)^q}\,dz\\
			&\qquad\le c\fiint_{Q_{2\rho}^\la(z_0)}\left(\inf_{w\in Q_{2\rho}^\la(z_0)}a(w)^{\theta}\frac{|u-u_{Q_{2\rho}^\la(z_0)}|^{\theta q}}{(2\la^\frac{p-2}{2}\rho)^{\theta q}}
			+\inf_{w\in Q_{2\rho}^\la(z_0)}a(w)^{\theta}|\na u|^{\theta q}\right)\,dz\\
			&\qquad\qquad\times \inf_{w\in Q_{2\rho}^\la(z_0)}a(w)^{1-\theta} \left(S(u,Q_{2\rho}^\la(z_0))\right)^\frac{(1-\theta)q}{2},
		\end{split}
	\end{align*}
 where $c=c(n,q)$. 
	By using \ref{p1}, we have
	\begin{align*}
		\begin{split}
	     &\fiint_{Q_{2\rho}^\la(z_0)}\inf_{w\in Q_{2\rho}^\la(z_0)}a(w)\frac{|u-u_{Q_{2\rho}^\la(z_0)}|^q}{(2\la^\frac{p-2}{2}\rho)^q}\,dz\\
        &\qquad\le c\fiint_{Q_{2\rho}^\la(z_0)}\left(\inf_{w\in Q_{2\rho}^\la(z_0)}a(w)^{\theta}\frac{|u-u_{Q_{2\rho}^\la(z_0)}|^{\theta q}}{(2\la^\frac{p-2}{2}\rho)^{\theta q}}
        +\inf_{w\in Q_{2\rho}^\la(z_0)}a(w)^{\theta}|\na u|^{\theta q}\right)\,dz\\
        &\qquad\qquad\times \la^{(p-q)(1-\theta)}S(u,Q_{2\rho}^\la(z_0))^\frac{(1-\theta)q}{2}.
		\end{split}
	\end{align*}
	
	Then we consider the last term on the right-hand side of \eqref{sec5:41}. The assumptions in Lemma~\ref{sec2:lem:1} with $\sig=q$, $s=\theta p$, $r = 2$ and $\vartheta=\theta p/q$ are satisfied for $\theta\in(nq/((n+2)p),1)$,
	and we obtain
	\begin{align*}
		\begin{split}
			&(2\rho)^\alpha\fiint_{Q_{2\rho}^\la(z_0)}\frac{|u-u_{Q_{2\rho}^\la(z_0)}|^q}{(2\la^\frac{p-2}{2}\rho)^q}\,dz\\
			&\qquad\le c\fiint_{Q_{2\rho}^\la(z_0)}\left(\frac{|u-u_{Q_{2\rho}^\la(z_0)}|^{\theta p}}{(2\la^\frac{p-2}{2}\rho)^{\theta p}}+|\na u|^{\theta p}\right)\,dz\left(S(u,Q_{2\rho}^\la(z_0))\right)^{\frac{p(1-\theta)}{2}}\\
			&\qquad\qquad\times	(2\rho)^\alpha\left(\sup_{I_{2\rho}(t_0)}\fint_{B^\la_{2\rho}(x_0)} \frac{|u-u_{Q_{2\rho}^\la(z_0)}|^2}{(2\la^\frac{p-2}{2}\rho)^2}\,dx\right)^\frac{q-p}{2},
		\end{split}
	\end{align*}
 where $c=c(n,q)$.
	Note that 
 \begin{align*}
     \begin{split}
         &(2\rho)^\alpha\left(\sup_{I_{2\rho}(t_0)}\fint_{B^\la_{2\rho}(x_0)} \frac{|u-u_{Q_{2\rho}^\la(z_0)}|^2}{(2\la^\frac{p-2}{2}\rho)^2}\,dx\right)^\frac{q- p}{2}\\
         &\qquad\le(2\rho)^\alpha\left(4\sup_{I_{2\rho}(t_0)}\fint_{B^\la_{2\rho}(x_0)} \frac{|u|^2}{(2\la^\frac{p-2}{2}\rho)^2}\,dx\right)^\frac{q-p }{2},
     \end{split}
 \end{align*}
and that from \eqref{sec6:1544} we obtain $\rho \leq c(M_1,n)\la ^\frac{-\mu}{n+2}$.
Therefore
	\begin{align*}
		\begin{split}
			&(2\rho)^\alpha\left(\sup_{I_{2\rho}(t_0)}\fint_{B^\la_{2\rho}(x_0)} \frac{|u-u_{Q_{2\rho}^\la(z_0)}|^2}{(2\la^\frac{p-2}{2}\rho)^2}\,dx\right)^\frac{q- p}{2}\\
			&\qquad\le c(2\rho)^{\alpha-\frac{(q-p)(n+2)}{2}}\la^\frac{(2-p)(q-p)(n+2)}{4}\left(\sup_{I_{2\rho}(t_0)}\int_{B_{2\rho}(x_0)} |u|^2\,dx\right)^\frac{q-p }{2}\\
            &\qquad\leq c\la^{\frac{-\mu}{n+2}(\al-\frac{(q-p)(n+2)}{2})+\frac{(2-p)(q-p)(n+2)}{4}}\leq c
		\end{split}
	\end{align*}
 where $c=c(n,p,q,\alpha,\diam(\Om),M_1,M_2)$. Observe that the last inequality follows from \eqref{range_pq}, as
 \begin{align*}
 \begin{split}
     &\frac{-\mu}{n+2}(\al-\frac{(q-p)(n+2)}{2})+\frac{(2-p)(q-p)(n+2)}{4}\\
     &\qquad \leq \frac{-\mu}{n+2}(\al-\frac{\al\mu}{2})+\frac{(2-p)\al\mu}{4}\\
     &\qquad = \frac{\al\mu}{n+2}(-1+\frac{\mu}{2}+\frac{(2-p)(n+2)}{4})=0.
    \end{split}
 \end{align*}
 The claim follows by combining the estimates above.
\end{proof}

Now we are ready to prove the reverse H\"older inequality in the $p$-intrinsic case.
\begin{lemma}\label{sec5:lem:3}
	There exist constants $c=c(\data)$ and $\theta_0=\theta_0(n,p,q)\in(0,1)$, such that for any $\theta\in(\theta_0,1)$ we have
	\begin{align*}
		\begin{split}
			&\fiint_{Q_{\rho}^\la(z_0)}H(z,|\na u|)\,dz \\
			&\qquad\le c\left(\fiint_{Q_{2\rho}^\la(z_0)}H(z,|\na u|)^\theta\,dz\right)^\frac{1}{\theta}+c\fiint_{Q_{2\rho}^\la(z_0)}H(z,|F|)\,dz.
		\end{split}
	\end{align*}	
\end{lemma}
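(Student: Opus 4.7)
I plan to derive Lemma~\ref{sec5:lem:3} by combining the Caccioppoli inequality with the Sobolev-Poincar\'e-type estimates already available, and closing by Young's inequality together with the stopping time identity \ref{p3}.

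First I would apply Lemma~\ref{sec3:lem:1} on $Q_\rho^\la(z_0) \subset Q_{2\rho}^\la(z_0)$ with $r = \la^{(p-2)/2}\rho$, $R = 2\la^{(p-2)/2}\rho$, $\tau = \rho^2$, $\ell = 4\rho^2$, so that $R-r = \la^{(p-2)/2}\rho$ and $\ell-\tau = 3\rho^2$. This provides an upper bound for $\fiint_{Q_\rho^\la}H(z,|\na u|)\,dz$ in terms of three averages over $Q_{2\rho}^\la$: the Sobolev-type integrand in $u - u_{Q_{2\rho}^\la}$ at $p$- and $q$-powers (divided by $(R-r)^p$ and $(R-r)^q$), the quadratic integrand in $u - u_{Q_{2\rho}^\la}$ (divided by $\ell-\tau$), and $H(z,|F|)$.

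Next I would control the Sobolev-type integrand by Lemma~\ref{sec5:lem:2}, whose right-hand side features $S(u,Q_{2\rho}^\la)^{(1-\theta)p/2}$ and $S^{(1-\theta)q/2}$ times averages of $|\na u|^{\theta p}$, $|\na u|^{\theta q}$, and analogous $|u-u_{Q_{2\rho}^\la}|^{\theta p}/\text{scale}^{\theta p}$ terms. Lemma~\ref{sec4:lem:2} and Lemma~\ref{sec4:lem:3} further reduce the $|u-u_{Q_{2\rho}^\la}|^{\theta p}/\text{scale}^{\theta p}$ and $\inf a^\theta|u-u_{Q_{2\rho}^\la}|^{\theta q}/\text{scale}^{\theta q}$ terms to $\fiint H^\theta$ plus a sub-linear tail; using Lemma~\ref{sec5:lem:1} to bound $S\le c\la^2$ and \ref{p4} to bound $\fiint|\na u|^{\theta p}\le c\la^{\theta p}$, all $\la$-exponents collapse (this is where \eqref{range_pq} and Lemma~\ref{lemma_decay} enter) and the Sobolev-type integrand contributes $c\la^{(1-\theta)p}\fiint H^\theta + c\la^p$. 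The quadratic integrand I would handle as in the proof of Lemma~\ref{sec5:lem:1}: by Lemma~\ref{sec2:lem:1} with $\sig=2,s=p,r=2,\vartheta=1/2$ and $S\le c\la^2$, it is $\le c\la^p$. The source term is already $c\fiint_{Q_{2\rho}^\la}H(z,|F|)\,dz$. Combining yields
\[
\fiint_{Q_\rho^\la}H(z,|\na u|)\,dz\le c\la^{(1-\theta)p}\fiint_{Q_{2\rho}^\la}H(z,|\na u|)^\theta\,dz+c\la^p+c\fiint_{Q_{2\rho}^\la}H(z,|F|)\,dz.
\]
Young's inequality with exponents $1/(1-\theta)$ and $1/\theta$ then rewrites $c\la^{(1-\theta)p}\fiint H^\theta$ as $\ve\la^p + C_\ve(\fiint H^\theta)^{1/\theta}$, and the identity $\la^p = \fiint_{Q_\rho^\la}(H(\cdot,|\na u|)+H(\cdot,|F|))\,dz$ from \ref{p3} converts the remaining $\la^p$ terms into $\fiint_{Q_\rho^\la}H + \fiint_{Q_\rho^\la}H(F)$. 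To absorb the resulting $\fiint_{Q_\rho^\la}H$ on the right-hand side, I would redo the Caccioppoli step on the nested pair $Q_{s_1}^\la\subset Q_{s_2}^\la$ for $\rho\le s_1<s_2\le 2\rho$, carry the $(s_2-s_1)^{-\gamma}$ factors through all the above estimates, and apply the iteration lemma Lemma~\ref{sec2:lem:2}.

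The main obstacle is this final absorption. The $c\la^p$ produced by the quadratic Caccioppoli term and by the sub-linear Sobolev-Poincar\'e tails in Lemma~\ref{sec4:lem:2} and Lemma~\ref{sec4:lem:3} carries a fixed constant (not proportional to $\ve$), so direct absorption fails and iteration on the nested cylinders is necessary. A secondary but delicate check is the exponent bookkeeping in the second step, where the singular scaling deficit $\mu$ together with the upper bound $q\le p+\al\mu/(n+2)$ from \eqref{range_pq} must conspire, via Lemma~\ref{lemma_decay}, to make every residual $\la$-power collapse to exactly $\la^p$ before Young's inequality is applied.
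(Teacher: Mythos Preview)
Your overall structure matches the paper's: apply Caccioppoli, control the Sobolev-type term via Lemmas~\ref{sec5:lem:2}, \ref{sec4:lem:2}, \ref{sec4:lem:3}, \ref{sec5:lem:1}, and absorb a residual $\la^p$ using \ref{p3}. The genuine gap is the absorption step. Once you estimate both the sub-linear tails from Lemmas~\ref{sec4:lem:2}--\ref{sec4:lem:3} and the quadratic term crudely by $c\la^p$ via \ref{p4}, the constant $c$ is fixed and larger than $1$, and the proposed iteration through Lemma~\ref{sec2:lem:2} cannot remove it. If you redo Caccioppoli on $Q_{s_1}^\la\subset Q_{s_2}^\la$, that $c\la^p$ inherits a factor $\rho^\gamma/(s_2-s_1)^\gamma$ from the Caccioppoli denominators; converting $\la^p$ back to $\fiint_{Q_\rho^\la}(H(\cdot,|\na u|)+H(\cdot,|F|))$ via \ref{p3} then produces $\tfrac{c\rho^\gamma}{(s_2-s_1)^\gamma}\fiint H(\cdot,|\na u|)$, whose coefficient is not a fixed $\vartheta<1$ but blows up as $s_2-s_1\to0$. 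Lemma~\ref{sec2:lem:2} is therefore inapplicable and the argument does not close.

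The paper avoids the difficulty by never producing a bare $c\la^p$. For the quadratic term it applies Lemma~\ref{sec2:lem:1} with $s=\theta p$ (not $s=p$ as you propose), so that a factor $\bigl(\fiint H^\theta\bigr)^{1/(\theta p)}$ survives; for the Sobolev tails it retains a power $\bigl(\fiint H^\theta\bigr)^{\beta}$ with $\beta=\min\{p-1-\tfrac{\alpha\mu}{n+2},\tfrac12\}$ rather than estimating all the way to $\la$. The combined bound then has the shape
\[
\fiint_{Q_\rho^\la}H(\cdot,|\na u|)\le c\la^{p-\beta}\Bigl(\fiint_{Q_{2\rho}^\la}H^\theta\Bigr)^{\frac{\beta}{\theta p}}+c\la^{p-\beta}\Bigl(\fiint_{Q_{2\rho}^\la}H(\cdot,|F|)\Bigr)^{\frac{\beta}{p}},
\]
to which Young's inequality applies with an arbitrarily small constant, yielding $\tfrac12\la^p+c\bigl(\fiint H^\theta\bigr)^{1/\theta}+c\fiint H(\cdot,|F|)$. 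The $\tfrac12\la^p$ is absorbed directly via \ref{p3}, with no iteration needed. In short, the fix is not to iterate after the fact but to keep an integral factor in every term before applying Young's inequality, so that the coefficient of $\la^p$ can be made smaller than $1$.
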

\begin{proof}
	Lemma~\ref{sec3:lem:1} implies that
	\begin{align}\label{sec5:5}
		\begin{split}
            &\fiint_{Q_{\rho}^\la(z_0)}H(z,|\na u|)\,dz\\
			&\qquad\le c\fiint_{Q_{2\rho}^\la(z_0)}\left(\frac{|u-u_{Q_{2\rho}^\la(z_0)}|^p}{(2\la^\frac{p-2}{2}\rho)^p}+a(z)\frac{|u-u_{Q_{2\rho}^\la(z_0)}|^q}{(2\la^\frac{p-2}{2}\rho)^q}\right)\,dz\\
			&\qquad\qquad+c\la^{p-2}\fiint_{Q_{2\rho}^\la(z_0)}\frac{|u-u_{Q_{2\rho}^\la(z_0)}|^2}{(2\la^\frac{p-2}{2}\rho)^2}\,dz+c\fiint_{Q_{2\rho}^\la(z_0)}H(z,|F|)\,dz,
		\end{split}
	\end{align}
	where $c=c(n,p,q,\nu,L)$.
	To estimate the first term on the right-hand side of \eqref{sec5:5}, we apply Lemma~\ref{sec5:lem:1} and Lemma~\ref{sec5:lem:2} to conclude that there exist $\theta_0=\theta_0(n,p,q)\in(0,1)$ and $c=c(\data)$, such that for any $\theta\in(\theta_0,1)$ we have 
	\begin{align*}
		\begin{split}
			&\fiint_{Q_{2\rho}^\la(z_0)}\left(\frac{|u-u_{Q_{2\rho}^\la(z_0)}|^p}{(2\la^\frac{p-2}{2}\rho)^p}+a(z)\frac{|u-u_{Q_{2\rho}^\la(z_0)}|^q}{(2\la^\frac{p-2}{2}\rho)^q}\right)\,dz\\
			&\qquad\le  c \la^{(1-\theta)p}\fiint_{Q_{2\rho}^\la(z_0)}\left(\frac{|u-u_{Q_{2\rho}^\la(z_0)}|^{\theta p}}{(2\la^\frac{p-2}{2}\rho)^{\theta p}}+\inf_{w\in Q_{2\rho}^\la(z_0)}a(w)^{\theta}\frac{|u-u_{Q_{2\rho}^\la(z_0)}|^{\theta q}}{(2\la^\frac{p-2}{2}\rho)^{\theta q}}\right)\,dz\\
			&\qquad\qquad+c \la^{(1-\theta )p}\fiint_{Q_{2\rho}^\la(z_0)}H(z,|\na u|)^{\theta}\,dz.
		\end{split}
	\end{align*}
	By Lemma~\ref{sec4:lem:2} and Lemma~\ref{sec4:lem:3} we obtain 
	\begin{align}\label{revh1}
		\begin{split}
			&\fiint_{Q_{2\rho}^\la(z_0)}\left(\frac{|u-u_{Q_{2\rho}^\la(z_0)}|^p}{(2\la^\frac{p-2}{2}\rho)^p}+a(z)\frac{|u-u_{Q_{2\rho}^\la(z_0)}|^q}{(2\la^\frac{p-2}{2}\rho)^q}\right)\,dz\\
			&\qquad\le  c \la^{(1-\theta )p}\fiint_{Q_{2\rho}^\la(z_0)}H(z,|\na u|)^{\theta}\,dz\\
			&\qquad\qquad+c  \la^{(1-p+\frac{\al \mu}{n+2})\theta p+p}\left(\fiint_{Q_{2 \rho}^\la(z_0)}(|\na u|+|F|)^{\theta p}\,dz\right)^{p-1-\frac{\alpha \mu}{n+2}}.
		\end{split}
	\end{align}
    Note that $p-1-\tfrac{\alpha \mu}{n+2} > 0$ by \eqref{range_pq}. Letting 
    \[
    	\beta=\min\left\{p-1-\frac{\alpha \mu}{n+2},\frac{1}{2}\right\},
    \]
    we obtain from \eqref{revh1} by \ref{p4} that
    \begin{align*}
    	\begin{split}
    		&\fiint_{Q_{2\rho}^\la(z_0)}\left(\frac{|u-u_{Q_{2\rho}^\la(z_0)}|^p}{(2\rho)^p}+a(z)\frac{|u-u_{Q_{2\rho}^\la(z_0)}|^q}{(2\rho)^q}\right)\,dz\\
    		&\qquad\le  c \la^{(1-\beta\theta)p} \left(\fiint_{Q_{2\rho}^\la(z_0)}H(z,|\na u|)^{\theta}\,dz\right)^{\beta}\\
    		&\qquad\qquad+c   \la^{(1-\beta \theta)p}\left(\fiint_{Q_{2\rho}^\la(z_0)}H(z,|F|)\,dz\right)^{\beta \theta}.
    	\end{split}
    \end{align*}

	To estimate the second term on the right-hand side of \eqref{sec5:5}, we apply Lemma~\ref{sec2:lem:1} with $\sigma = 2, s= \theta p, \vartheta = \tfrac{1}{2}$ and $ r=2$, where $\theta\in(2n/((n+2)p),1)$. This and Lemma~\ref{sec5:lem:1} gives
	\begin{align*}
		\begin{split}
			&\fiint_{Q_{2\rho}^\la(z_0)}\frac{|u-u_{Q_{2\rho}^\la(z_0)}|^2}{(2\la^\frac{p-2}{2}\rho)^2}\,dz\\
			&\qquad\le c \fint_{I_{2\rho}(t_0)}\left(\fint_{B^\la_{2\rho}(x_0)}\left(\frac{|u-u_{Q_{2\rho}^\la(z_0)}|^{\theta  p}}{(2\la^\frac{p-2}{2}\rho)^{\theta  p}}+|\na u|^{\theta  p}\right)\,dx\right)^\frac{1}{\theta p}\left(S(u,Q_{2\rho}^\la(z_0))\right)^\frac{1}{2}\,dt\\
			&\qquad\le c\la \left(\fiint_{Q_{2 \rho}^\la(z_0)}\left(\frac{|u-u_{Q_{2\rho}^\la(z_0)}|^{\theta  p}}{(2\la^\frac{p-2}{2}\rho)^{\theta  p}}+|\na u|^{\theta  p}\right)\,dz\right)^\frac{1}{\theta p},
		\end{split}
	\end{align*}
 where $c=c(\data)$. Applying Lemma~\ref{sec4:lem:2} and \ref{p4} to the right-hand side implies  
\begin{align*}
\begin{split}
	&\la^{p-2}\fiint_{Q_{2\rho}^\la(z_0)}\frac{|u-u_{Q_{2\rho}^\la(z_0)}|^2}{(2\la^\frac{p-2}{2}\rho)^2}\,dz
	\le  c\la^{p-\beta}\left(\fiint_{Q_{2\rho}^\la(z_0)}H(z,|\na u|)^{\theta}\,dz\right)^{\frac{\beta}{\theta p}}\\
	&\qquad\qquad\qquad+c\la^{p-\beta}\left(\fiint_{Q_{2\rho}^\la(z_0)}H(z,|F|)\,dz\right)^{\frac{\beta}{p}}.
	\end{split}
	\end{align*}
Combining the estimates for the terms in \eqref{sec5:5} and applying \ref{p4} gives
\begin{align*}
	\begin{split}
		&\fiint_{Q_{\rho}^\la(z_0)}H(z,|\na u|)\,dz \le  c \la^{p-\beta}\left(\fiint_{Q_{2\rho}^\la(z_0)}H(z,|\na u|)^{\theta}\,dz\right)^{\frac{\beta}{\theta p}}
		\\
        &\qquad\qquad +c\la^{p-\beta}\left(\fiint_{Q_{2\rho}^\la(z_0)}H(z,|F|)\,dz\right)^{\frac{\beta}{p}}.
 \end{split}
\end{align*}
By applying Young's inequality, we obtain
\begin{align*}
    \begin{split}
         	&\fiint_{Q_{\rho}^\la(z_0)}H(z,|\na u|)\,dz\\
  &\qquad\le  \frac{1}{2}\la^p+c\left(\fiint_{Q_{2\rho}^\la(z_0)}H(z,|\na u|)^{\theta}\,dz\right)^{\frac{1}{\theta }}
		+c\fiint_{Q_{2\rho}^\la(z_0)}H(z,|F|)\,dz.
    \end{split}
\end{align*}
Using \ref{p3} to absorb $\tfrac{1}{2}\la^p$ into the left hand side, we conclude that
\begin{align*}
\begin{split}
    &\fiint_{Q_{\rho}^\la(z_0)}H(z,|\na u|)\,dz
		\le  c\left(\fiint_{Q_{2\rho}^\la(z_0)}H(z,|\na u|)^{\theta}\,dz\right)^{\frac{1}{\theta }}\\
	&\qquad\qquad+c\fiint_{Q_{2\rho}^\la(z_0)}H(z,|F|)\,dz.
\end{split}
\end{align*}
This completes the proof.
\end{proof}
We finish this subsection with a corollary of the previous lemma which is used in the proof of higher integrability. The distribution sets are denoted as
\begin{equation}\label{sec2:2}
\Psi(\La)=\{ z\in \Omega_T: H(z,|\na u(z)|)>\La\} 
\end{equation}
and
\begin{equation}\label{sec2:3}
\Phi(\La)=\{ z\in \Omega_T: H(z,|F|)>\La\}.
\end{equation}
\begin{lemma}\label{sec5:lem:4}
	There exist constants $c=c(\data)$ and $\theta_0=\theta_0(n,p,q)\in(0,1)$, such that for any $\theta\in(\theta_0,1)$ we have
	\begin{align*}
		\begin{split}
			&\iint_{Q_{2\cv\rho}^\la(z_0)}H(z,|\na u|)\,dz
			\le c\La^{1-\theta}\iint_{Q_{2\rho}^\la(z_0)\cap \Psi(c^{-1}\La)}H(z,|\na u|)^\theta\,dz\\
			&\qquad\qquad\qquad +c\iint_{Q_{2\rho}^\la(z_0)\cap \Phi(c^{-1}\La)}H(z,|F|)\,dz.
		\end{split}
	\end{align*}
 \end{lemma}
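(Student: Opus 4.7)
My plan is to combine Lemma~\ref{sec5:lem:3} with the stopping-time identities \ref{p3}--\ref{p4}, a standard upper level-set splitting at a height proportional to $\La$, and a final Hölder step in which the remaining volume factors cancel exactly.

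First I apply Lemma~\ref{sec5:lem:3} on $Q_\rho^\la(z_0)$ and use \ref{p3} to replace the left-hand side $\fiint_{Q_\rho^\la}H(z,|\na u|)\,dz$ by $\la^p$, absorbing the resulting $\fiint_{Q_\rho^\la}H(z,|F|)\,dz$ term into the existing $F$-average via $|Q_\rho^\la|\le |Q_{2\rho}^\la|$. This gives
\[
    \la^p \le c\left(\fiint_{Q_{2\rho}^\la}H(z,|\na u|)^\theta\,dz\right)^{1/\theta} + c\fiint_{Q_{2\rho}^\la}H(z,|F|)\,dz.
\]
Next, for any level $\kappa>0$ I split both averages as
\[
    \fiint_{Q_{2\rho}^\la}H^\theta\,dz \le \kappa^\theta + \frac{1}{|Q_{2\rho}^\la|}\iint_{Q_{2\rho}^\la\cap\Psi(\kappa)}H^\theta\,dz
\]
and an analogous inequality for $H(z,|F|)$ with $\Phi(\kappa)$, and apply $(a+b)^{1/\theta}\le 2^{1/\theta-1}(a^{1/\theta}+b^{1/\theta})$. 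Since~\ref{p1} yields $\La\le (1+K)\la^p$, choosing $\kappa=\tilde c^{-1}\La$ for $\tilde c=\tilde c(\data)$ large enough makes the $c\kappa$-contribution at most $\la^p/2$, which can be absorbed to produce
\[
    \la^p \le \frac{c}{|Q_{2\rho}^\la|^{1/\theta}}\left(\iint_{Q_{2\rho}^\la\cap\Psi(\tilde c^{-1}\La)}H^\theta\,dz\right)^{1/\theta} + \frac{c}{|Q_{2\rho}^\la|}\iint_{Q_{2\rho}^\la\cap\Phi(\tilde c^{-1}\La)}H(z,|F|)\,dz.
\]

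Now~\ref{p4} at $s=2\cv\rho$ gives $\iint_{Q_{2\cv\rho}^\la}H\le\la^p|Q_{2\cv\rho}^\la|\le c|Q_{2\rho}^\la|\la^p$, so multiplying the preceding display by $|Q_{2\rho}^\la|$ bounds $\iint_{Q_{2\cv\rho}^\la}H$ by
\[
    c|Q_{2\rho}^\la|^{1-1/\theta}\left(\iint_{Q_{2\rho}^\la\cap\Psi(\tilde c^{-1}\La)}H^\theta\,dz\right)^{1/\theta}+c\iint_{Q_{2\rho}^\la\cap\Phi(\tilde c^{-1}\La)}H(z,|F|)\,dz.
\]

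The key obstacle is converting the $(\cdot)^{1/\theta}$ factor into a quantity linear in $\iint_\Psi H^\theta$ with constant of order $\La^{1-\theta}$. For this I invoke~\ref{p4} once more at $s=2\rho$ to get $\iint_{Q_{2\rho}^\la}H\le c\La|Q_{2\rho}^\la|$; Hölder's inequality with exponents $1/\theta$ and $1/(1-\theta)$ then yields $\iint_{Q_{2\rho}^\la}H^\theta\le c\La^\theta|Q_{2\rho}^\la|$. Writing
\[
    \left(\iint_{Q_{2\rho}^\la\cap\Psi}H^\theta\right)^{1/\theta}=\iint_{Q_{2\rho}^\la\cap\Psi}H^\theta\cdot\left(\iint_{Q_{2\rho}^\la\cap\Psi}H^\theta\right)^{(1-\theta)/\theta}
\]
and estimating the second factor by $(c\La^\theta|Q_{2\rho}^\la|)^{(1-\theta)/\theta}=c\La^{1-\theta}|Q_{2\rho}^\la|^{(1-\theta)/\theta}$, the total exponent of $|Q_{2\rho}^\la|$ collapses to $1-1/\theta+(1-\theta)/\theta=0$, so all volume factors cancel exactly and the bound $c\La^{1-\theta}\iint_{Q_{2\rho}^\la\cap\Psi(\tilde c^{-1}\La)}H^\theta$ emerges. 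Relabelling $\tilde c$ as $c$ delivers the stated inequality.
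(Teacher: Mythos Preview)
Your argument is correct and uses the same ingredients as the paper---Lemma~\ref{sec5:lem:3}, the stopping conditions \ref{p3}--\ref{p4}, assumption \ref{p1}, and a level-set split---but you order them differently. The paper linearises the power $(\,\cdot\,)^{1/\theta}$ \emph{first}: from \ref{p4} one has $\fiint_{Q_{2\rho}^\la}H^\theta\le\la^{p\theta}$, so immediately
\[
\Bigl(\fiint_{Q_{2\rho}^\la}H^\theta\Bigr)^{1/\theta}\le \la^{p(1-\theta)}\fiint_{Q_{2\rho}^\la}H^\theta,
\]
and only then splits at a level comparable to $\la^p$. This keeps everything in mean-value form and avoids your detour through raw integrals, the factor $|Q_{2\rho}^\la|^{1-1/\theta}$, and the final H\"older step that makes the volume powers cancel. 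Your route reaches the same conclusion but does more bookkeeping; the paper's ordering is shorter precisely because the linearisation happens before the non-averaged quantities appear. A minor remark: your phrase ``via $|Q_\rho^\la|\le|Q_{2\rho}^\la|$'' for absorbing $\fiint_{Q_\rho^\la}H(z,|F|)$ really uses $Q_\rho^\la\subset Q_{2\rho}^\la$ together with $|Q_{2\rho}^\la|/|Q_\rho^\la|=2^{n+2}$, not merely the measure inequality.
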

\begin{proof}
	The condition \ref{p4} implies that 
	\[
			\left(\fiint_{Q_{2\rho}^\la(z_0)}H(z,|\na u|)^\theta \,dz\right)^\frac{1}{\theta }
   \le \la^{p(1-\theta)}\fiint_{Q_{2\rho}^\la(z_0)}H(z,|\na u|)^\theta \,dz.
	\]
     By representing $Q_{2\rho}^\la(z_0)$ as a union of $Q_{2\rho}^\la(z_0)\cap \Psi((4c)^{-1/\theta}\la^p)$ and  $Q_{2\rho}^\la(z_0)\setminus \Psi((4c)^{-1/\theta}\la^p)$ , we have
     \begin{align*}
     \begin{split}
         &\left(\fiint_{Q_{2\rho}^\la(z_0)}H(z,|\na u|)^\theta \,dz\right)^\frac{1}{\theta }\le \frac{1}{4c}\la^p\\
       &\qquad +\frac{\la^{p(1-\theta)}}{|Q_{2\rho}^\la|}\iint_{Q_{2\rho}^\la(z_0)\cap \Psi((4c)^{-1/\theta}\la^p)}H(z,|\na u|)^{\theta }\,dz,
     \end{split}
     \end{align*}
     for any $c > 0$. 
     A similar argument gives
     \[
         \fiint_{Q_{2\rho}^\la(z_0)}H(z,|F|)\,dz\le \frac{1}{4c}\la^p+\frac{1}{|Q_{2\rho}^\la(z_0)|}\iint_{Q_{2\rho}^\la(z_0)\cap \Phi((4c)^{-1}\la^p)}H(z,|F|)\,dz.
     \]     
    It follows from Lemma~\ref{sec5:lem:3} that
	\begin{align*}
		\begin{split}
			&\fiint_{Q_{\rho}^\la(z_0)}(H(z,|\na u|)+H(z,|F|))\,dz\\
			&\qquad\le \frac{1}{2}\la^p+\frac{c\la^{p(1-\theta)}}{|Q_{2\rho}^\la|}\iint_{Q_{2\rho}^\la(z_0)\cap \Psi((4c)^{-1/\theta}\la^p)}H(z,|\na u|)^{\theta }\,dz\\
			&\qquad\qquad +\frac{c}{|Q_{2\rho}^\la|}\iint_{Q_{2\rho}^\la(z_0)\cap \Phi((4c)^{-1}\la^p)}H(z,|F|)\,dz.
		\end{split}
	\end{align*}
	By recalling \ref{p2}, we obtain
	\begin{align*}
		\begin{split}
			&\fiint_{Q_{2\cv\rho}^\la(z_0)}(H(z,|\na u|)+H(z,|F|))\,dz\\
			&\qquad\le 2c\frac{\la^{p(1-\theta)}}{|Q_{2\rho}^\la|}\iint_{Q_{2\rho}^\la(z_0)\cap \Psi((4c)^{-1/\theta}\la^p)}H(z,|\na u|)^{\theta }\,dz\\
			&\qquad\qquad +\frac{2c}{|Q_{2\rho}^\la|}\iint_{Q_{2\rho}^\la(z_0)\cap \Phi((4c)^{-1}\la^p)}H(z,|F|)\,dz.
		\end{split}
	\end{align*}
    Thus, we have
    \begin{align}\label{sec5:6}
    	\begin{split}
    		&\iint_{Q_{2\cv\rho}^\la(z_0)}H(z,|\na u|)\,dz
    		\le 2c\la^{p(1-\theta)}\iint_{Q_{2\rho}^\la(z_0)\cap \Psi((4c)^{-1/\theta}\la^p)}H(z,|\na u|)^{\theta }\,dz\\
    		&\qquad\qquad\qquad +2c\iint_{Q_{2\rho}^\la(z_0)\cap \Phi((4c)^{-1}\la^p)}H(z,|F|)\,dz.
    	\end{split}
    \end{align}
    
    We note that
    \[
    	\frac{\la^p}{4c}\ge \frac{\la^p}{(4c)^{1/\theta}}\ge \frac{\la^p}{(4c)^{1/\theta_0}}\ge \frac{\la^p+a(z_0)\la^q}{2K(4c)^{1/\theta_0}}=\frac{\La}{2K(4c)^{1/\theta_0}},
    \]
    where we applied \ref{p1}. The estimate above implies that 
    \[
    		\Psi((4c)^{-1/\theta}\la^p)\subset \Psi((2K(4c)^{1/\theta_0})^{-1}\La)
      \quad\text{and}\quad
    		\Phi((4c)^{-1}\la^p)\subset \Phi((2K(4c)^{1/\theta_0})^{-1}\La).
    \]
    Therefore, by replacing $2K(4c)^{1/\theta_0}$ with $c$, \eqref{sec5:6} can be written as
    \begin{align*}
    	\begin{split}
    		&\iint_{Q_{2\cv\rho}^\la(z_0)}H(z,|\na u|)\,dz
    		\le c\La^{1-\theta}\iint_{Q_{2\rho}^\la(z_0)\cap \Psi(c^{-1}\La)}H(z,|\na u|)^{\theta }\,dz\\
    		&\qquad\qquad\qquad +c\iint_{Q_{2\rho}^\la(z_0)\cap \Phi(c^{-1}\La)}H(z,|F|)\,dz.
    	\end{split}
    \end{align*}
    This completes the proof.
\end{proof}

\subsection{The $(p,q)$-intrinsic case}
In this subsection we show a reverse Hölder inequality in the $(p,q)$-intrinsic cylinder $G_\rho^\la(z_0)$ satisfying \ref{q1}, \ref{q2}, \ref{q3} and $G_{2\cv\rho}^\la(z_0)\subset \Om_T$. Note that \ref{q2} and \ref{q5} imply
\[
	\fiint_{G_{4\rho}^{\la}(z_0)}\left(H(z_0,|\na u|)+H(z_0,|F|)\right)\,dz<4a(z_0)\la^q.
\]
It follows that
\begin{align}\label{sec4:5}
	\fiint_{G_{4\rho}^\la(z_0)}(|\na u|^q+|F|^q)\,dz<4\la^q.
\end{align}

We start with a  $(p,q)$-intrinsic parabolic Poincar\'e inequality.
\begin{lemma}\label{sec4:lem:5}
	For $s\in[2\rho,4\rho]$ and $\theta\in((q-1)/p,1]$, 
	there exists a constant $c=c(n,N,p,q,L)$, such that 
	\begin{align*}
 \begin{split}
     &\fiint_{G_{s}^\la(z_0)}H\left(z_0,\frac{|u-u_{G_{s}^\la(z_0)}|}{\la^\frac{p-2}{2}s}\right)^\theta\,dz
			\le c\La^{(2-p)\theta}\left(\fiint_{G_s^\la(z_0)}H(z_0,|\na u|)^\theta\,dz\right)^{(p-1)}\\
   &\qquad\qquad+c\La^{(2-p)\theta}\left(\fiint_{G_s^\la(z_0)}H(z_0,|F|)\,dz\right)^{\theta(p-1)}.
 \end{split}
	\end{align*}
\end{lemma}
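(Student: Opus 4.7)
The plan is to apply the parabolic Poincar\'e inequality (Lemma~\ref{sec3:lem:3}) on $G_s^\la(z_0)$ separately to the $p$-part and the $q$-part of $H(z_0,\cdot)$. Since $\theta\in(0,1]$, subadditivity gives $H(z_0,r)^\theta\le r^{p\theta}+a(z_0)^\theta r^{q\theta}$, splitting the left-hand side into two pieces. I would apply Lemma~\ref{sec3:lem:3} with $m=p$ to the first and $m=q$ to the second, observing that on the $(p,q)$-intrinsic cylinder the Poincar\'e ratio simplifies to $\ell/R^2=\la^2/H(z_0,\la)=\la^2/\La$.

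After this, the direct gradient contributions $\fiint|\na u|^{\theta p}$ and $a(z_0)^\theta\fiint|\na u|^{\theta q}$ are both bounded by $\fiint H(z_0,|\na u|)^\theta$. Using the uniform bound $\fiint H(z_0,|\na u|)^\theta\le c\La^\theta$, obtained from Jensen's inequality combined with \ref{q4} and \ref{q2}, these already fit the desired right-hand side via
\[
    \fiint H(z_0,|\na u|)^\theta = \left(\fiint H(z_0,|\na u|)^\theta\right)^{2-p}\left(\fiint H(z_0,|\na u|)^\theta\right)^{p-1}\le c\La^{(2-p)\theta}\left(\fiint H(z_0,|\na u|)^\theta\right)^{p-1}.
\]

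For the coefficient tail terms, I would first use \ref{q2} to replace each $a(z)$ by $2a(z_0)$ on $G_s^\la(z_0)\subset G_{4\rho}^\la(z_0)$, and then apply H\"older's inequality: with exponents $\theta p/(p-1)$ and $\theta p/(q-1)$ on the $|\na u|^{p-1}$ and $|\na u|^{q-1}$ contributions (both valid since $\theta>(q-1)/p$), and with exponents $p/(p-1)$ and $q/(q-1)$ on the $|F|^{p-1}$ and $|F|^{q-1}$ contributions. This produces factors such as $(\fiint H(z_0,|\na u|)^\theta)^{(p-1)/(\theta p)}$ and $(\fiint H(z_0,|F|))^{(p-1)/p}$ which, on being raised to $\theta p$ or $\theta q$, give the outer exponents $p-1$ and $\theta(p-1)$ appearing in the claimed bound.

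The main bookkeeping challenge is collecting the $\la$-prefactors into a single $\La^{(2-p)\theta}$. For the $m=p$ tail, $(\la^2/\La)^{\theta p}\le \La^{(2-p)\theta}$ follows directly from $\la^p\le \La$. For the $m=q$ tail the prefactor is $a(z_0)^\theta(\la^2/\La)^{\theta q}$, and the natural outer exponents (e.g.\ $\theta q(p-1)/p$ on $\fiint H(z_0,|F|)$, or $\theta q(q-1)/(\theta p)$ on $\fiint H(z_0,|\na u|)^\theta$) exceed the targets. I would absorb the excess by trading powers of the integrals for powers of $\La$ via $\fiint H(z_0,|F|)\le c\La$ and $\fiint H(z_0,|\na u|)^\theta\le c\La^\theta$, and then invoke the $(p,q)$-intrinsic condition \ref{q1}, namely $a(z_0)\la^{q-p}\ge K$, to dominate products like $a(z_0)\la^q$ and $a(z_0)\la^{2q}$ by the correct power of $\La$. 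I expect this last step, especially in the mixed $a(z_0)|F|^{q-1}$ contributions where an additional factor $a(z_0)^{1/q}$ arises from H\"older, to be the most delicate part of the argument.
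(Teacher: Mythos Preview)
Your overall strategy matches the paper's: apply Lemma~\ref{sec3:lem:3} to the $p$- and $q$-parts separately, use \ref{q2} to freeze $a(z)$ at $a(z_0)$, and reduce via H\"older. The direct gradient terms are handled exactly as you describe (the paper does the same implicitly in its ``combining the above inequalities'' step).

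However, there is a genuine gap in your bookkeeping for the cross term $a(z_0)|\na u|^{q-1}$ inside the $m=p$ tail. Following your recipe---H\"older with exponent $\theta p/(q-1)$, then trade the excess $(\fiint H(z_0,|\na u|)^\theta)^{q-p}$ for $\La^{\theta(q-p)}$---yields the prefactor
\[
\left(\frac{\la^2}{\La}\right)^{\theta p}a(z_0)^{\theta p}\La^{\theta(q-p)}=a(z_0)^{\theta p}\la^{2\theta p}\La^{\theta(q-2p)},
\]
and you would need $a(z_0)^{p}\la^{2p}\le c\La^{2+p-q}$. Using only $a(z_0)\la^q\le\La$ this reduces to $\La\le c\la^{p}$, which \emph{fails} in the $(p,q)$-intrinsic regime (where $\La>(K+1)\la^p$) whenever $q>2$; and $q>2$ is permitted by \eqref{range_pq}. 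Invoking \ref{q1} does not help here, since it gives a \emph{lower} bound on $a(z_0)\la^{q-p}$, not the upper bound you need.

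The paper avoids this by first packaging the entire tail as
\[
H\Bigl(z_0,\tfrac{\la^2}{\La}\fiint\bigl(|\na u|^{-1}H(z_0,|\na u|)+|F|^{-1}H(z_0,|F|)\bigr)\,dz\Bigr)^\theta
\]
and exploiting the structural cancellation
\[
\frac{\la^2}{\La}\,a(z_0)|\na u|^{q-1}\le\frac{\la^2}{a(z_0)\la^{q}}\,a(z_0)|\na u|^{q-1}=\la^{2-q}|\na u|^{q-1},
\]
which removes $a(z_0)$ entirely. It then uses \eqref{sec4:5} (i.e.\ $\fiint(|\na u|^q+|F|^q)\le c\la^q$) to trade powers against $\la$ rather than $\La$, arriving at the clean form $c\la^{2-p}\bigl(\fiint(|\na u|+|F|)^{q-1}\bigr)^{(p-1)/(q-1)}$ before applying $H(z_0,\cdot)^\theta$. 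Your plan can be repaired by either using this cancellation, or by trading with the sharper bound $\fiint|\na u|^{\theta p}\le c\la^{\theta p}$ (from \eqref{sec4:5} and Jensen) in place of $\fiint H(z_0,|\na u|)^\theta\le c\La^\theta$. Also, the uniform bound you quote should come from \ref{q5}, not \ref{q4}, since $s\in[2\rho,4\rho]$.
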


\begin{proof}
	Note that
	\begin{align*}
 \begin{split}
 &\fiint_{G_{s}^\la(z_0)}H\left(z_0,\frac{|u-u_{G_{s}^\la(z_0)}|}{\la^\frac{p-2}{2}s}\right)^\theta\,dz\\
			&\qquad\le 2\fiint_{G_{s}^\la(z_0)}\left(\frac{|u-u_{G_{s}^\la(z_0)}|^{\theta p}}{(\la^\frac{p-2}{2}s)^{\theta p}}+a(z_0)^\theta\frac{|u-u_{G_{s}^\la(z_0)}|^{\theta q}}{(\la^\frac{p-2}{2}s)^{\theta q}}\right)\,dz.
		\end{split}
	\end{align*}
    Therefore, by Lemma~\ref{sec3:lem:3} and \ref{q2}, there exists a constant $c=c(n,N,p,q,L)$, such that
	\begin{equation}\label{sec4:51}
		\begin{split}		
  &\fiint_{G_{s}^\la(z_0)}H\left(z_0,\frac{|u-u_{G_{s}^\la(z_0)}|}{\la^\frac{p-2}{2}s}\right)^\theta\,dz\le c\fiint_{G_{s}^\la(z_0)}H(z_0,|\na u|)^\theta\,dz\\
			&\qquad
			+c H\left(z_0,\frac{\la^2}{\La}\fiint_{G_{s}^\la(z_0)}|\na u|^{-1}H(z_0,|\na u|)+|F|^{-1}H(z_0,|F|)\,dz\right)^\theta.
		\end{split}
	\end{equation}
	To estimate the second term on the right-hand side of \eqref{sec4:51}, we note that
	\begin{align*}
		\begin{split}
			&\frac{\la^2}{\La}\fiint_{G_s^\la(z_0)}|\na u|^{-1}H(z_0,|\na u|)\,dz\\
			&\qquad= \frac{\la}{\la^{p-1}+a(z_0)\la^{q-1}}\fiint_{G_{s}^\la(z_0)}(|\na u|^{p-1}+a(z_0)|\na u|^{q-1})\,dz\\
			&\qquad\le \frac{\la}{\la^{p-1}} \fiint_{G_{s}^\la(z_0)}|\na u|^{p-1}\,dz+\frac{\la}{\la^{q-1}}\fiint_{G_{s}^\la(z_0)}|\na u|^{q-1}\,dz.
		\end{split}
	\end{align*}
    By \eqref{sec4:5} and Hölder's inequality, and  the same argument for the term with $H(z_0,|F|)$, we have
    \begin{align*}
    \begin{split}
        &\frac{\la^2}{\La}\fiint_{G_{s}^\la(z_0)}|\na u|^{-1}H(z_0,|\na u|)+|F|^{-1}H(z_0,|F|)\,dz\\
        &\qquad\le c\la^{2-p}\left(\fiint_{G_{s}^\la(z_0)}(|\na u|+|F|)^{q-1}\,dz\right)^\frac{p-1}{q-1},
    \end{split}
    \end{align*}
    where $c=c(p,q)$. We conclude that
	\begin{equation}\label{sec4:52}
		\begin{split}
			&H\left(z_0,\frac{\la^2}{\La}\fiint_{G_{s}^\la(z_0)}|\na u|^{-1}H(z_0,|\na u|)+|F|^{-1}H(z_0,|F|)\,dz\right)^\theta\\
			&\qquad\le c\left(\la^{(2-p) p}\left(\fiint_{G_s^\la(z_0)}(|\na u|+|F|)^{q-1}\,dz\right)^\frac{p(p-1)}{q-1}\right)^{\theta }\\
			&\qquad\qquad+c\left(a(z_0)\la^{(2-p) q}\left(\fiint_{G_s^\la(z_0)}(|\na u|+|F|)^{q-1}\,dz\right)^\frac{q(p-1)}{q-1}\right)^{\theta },
		\end{split}
	\end{equation}
	where $c=c(p,q)$. In order to estimate the first term on the right-hand side of \eqref{sec4:52}, we apply Hölder's inequality and \eqref{sec4:5} to get
	\begin{align*}
		\begin{split}
			&\la^{(2-p)p}\left(\fiint_{G_s^\la(z_0)}(|\na u|+|F|)^{q-1}\,dz\right)^\frac{p(p-1)}{q-1}\\
			&\qquad\le\la^{(2-p)p}\left(\fiint_{G_s^\la(z_0)}(|\na u|^{p}+|F|^{ p})^{\theta}\,dz\right)^{\frac{1}{\theta}(p-1)}\\
			&\qquad\le \La^{2-p}\left(\fiint_{G_s^\la(z_0)}H(z_0,|\na u|)^\theta\,dz\right)^{\frac{1}{\theta}(p-1)}\\
       &\qquad\qquad+\La^{2-p}\left(\fiint_{G_s^\la(z_0)}H(z_0,|F|)\,dz\right)^{p-1},
		\end{split}
	\end{align*}
 for any $\theta\in((q-1)/p,1]$ with $c=c(n,p)$. Similarly, we have for any $\theta\in((q-1)/q,1]$ that
	\begin{align*}
		\begin{split}
			&a(z_0)\la^{(2-p)q}\left(\fiint_{G_s^\la(z_0)}(|\na u|+|F|)^{q-1}\,dz\right)^\frac{q(p-1)}{q-1}\\
			&\qquad\le a(z_0)\la^{(2-p)q}\left(\fiint_{G_s^\la(z_0)}(|\na u|+|F|)^{\theta q}\,dz\right)^{\frac{1}{\theta}(p-1)}\\
            &\qquad= a(z_0)^{2-p}\la^{(2-p)q}\left(\fiint_{G_s^\la(z_0)}a(z_0)^{\theta}(|\na u|+|F|)^{\theta q}\,dz\right)^{\frac{1}{\theta}(p-1)}\\
			&\qquad\le c\La^{2-p}\left(\fiint_{G_s^\la(z_0)}H(z_0,|\na u|)^\theta\,dz\right)^{\frac{1}{\theta}(p-1)}\\
            &\qquad\qquad+c\La^{2-p}\left(\fiint_{G_s^\la(z_0)}H(z_0,|F|)\,dz\right)^{p-1},
		\end{split}
	\end{align*}
 where $c=c(n,p)$.
	Combining the above inequalities, we conclude that
	\begin{align*}
 \begin{split}
     &H\left(z_0,\frac{\la^2}{\La}\fiint_{G_{s}^\la(z_0)}|\na u|^{-1}H(z_0,|\na u|)+|F|^{-1}H(z_0,|F|)\,dz\right)^\theta\\
			 &\qquad\le c\La^{(2-p)\theta}\left(\fiint_{G_s^\la(z_0)}H(z_0,|\na u|)^\theta\,dz\right)^{(p-1)}\\
    &\qquad\qquad+c\La^{(2-p)\theta}\left(\fiint_{G_s^\la(z_0)}H(z_0,|F|)\,dz\right)^{\theta(p-1)},
		\end{split}
	\end{align*}
	which completes the proof.
\end{proof}
Note that by replacing $H(z_0,s)^\theta$ with $s^{\theta p}$ in the proof of Lemma~\ref{sec4:lem:5}, we also get the following result. All necessary calculations are already contained in the proof of the previous lemma. 
\begin{lemma}\label{sec4:lem:6}
	For $s\in[2\rho,4\rho]$ and $\theta\in((q-1)/p,1]$, there exists a constant $c=c(n,N,p,q,L)$, such that 
	\begin{align*}
 \begin{split}
     &\fiint_{G_{s}^\la(z_0)}\left(\frac{|u-u_{G_{s}^\la(z_0)}|}{\la^\frac{p-2}{2}s}\right)^{\theta p}\,dz
			\le c\la^{(2-p)\theta p}\left(\fiint_{G_s^\la(z_0)}|\na u|^{\theta p}\,dz\right)^{p-1}\\
   &\qquad\qquad+c\la^{(2-p)\theta p}\left(\fiint_{G_s^\la(z_0)}|F|^{ p}\,dz\right)^{\theta (p-1)}.
 \end{split}
	\end{align*}
\end{lemma}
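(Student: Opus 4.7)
The plan is to mirror the proof of Lemma~\ref{sec4:lem:5} verbatim, with the single modification that we treat $|u-u_{G_s^\la(z_0)}|^{\theta p}/(\la^{(p-2)/2}s)^{\theta p}$ directly instead of first bounding it by $H(z_0,|u-u_{G_s^\la(z_0)}|/(\la^{(p-2)/2}s))^\theta$. As the remark after Lemma~\ref{sec4:lem:5} indicates, every substantive calculation is already in place.

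First I would apply the parabolic Poincar\'e inequality (Lemma~\ref{sec3:lem:3}) on $G_s^\la(z_0)$ with $m=p$ and the given exponent $\theta$. The spatial radius of $G_s^\la(z_0)$ is $R=\la^{(p-2)/2}s$ and the temporal half-length is $\ell=(\la^p/\La)s^2$, so $\ell/R^2=\la^2/\La$. Invoking \ref{q2} to replace $a(z)$ by $a(z_0)$ up to a multiplicative constant, this yields an upper bound of the form $c\fiint_{G_s^\la(z_0)}|\na u|^{\theta p}\,dz+c\bigl((\la^2/\La)\fiint_{G_s^\la(z_0)}(|\na u|^{-1}H(z_0,|\na u|)+|F|^{-1}H(z_0,|F|))\,dz\bigr)^{\theta p}$.

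For the second summand I would reuse, without any change, the estimate from the proof of Lemma~\ref{sec4:lem:5} bounding $(\la^2/\La)\fiint(|\na u|^{-1}H(z_0,|\na u|)+|F|^{-1}H(z_0,|F|))$ by $c\la^{2-p}(\fiint(|\na u|+|F|)^{q-1}\,dz)^{(p-1)/(q-1)}$. Raising to the $\theta p$ power and applying H\"older's inequality with $\theta p\ge q-1$ gives $(\fiint(|\na u|+|F|)^{q-1}\,dz)^{\theta p(p-1)/(q-1)}\le (\fiint(|\na u|+|F|)^{\theta p}\,dz)^{p-1}$. Splitting the gradient and forcing terms and once more applying H\"older in the form $(\fiint|F|^{\theta p}\,dz)^{p-1}\le (\fiint|F|^p\,dz)^{\theta(p-1)}$ then produces the stated bound.

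The first summand calls for the interpolation $\fiint|\na u|^{\theta p}\,dz\le c\la^{(2-p)\theta p}(\fiint|\na u|^{\theta p}\,dz)^{p-1}$, which reduces to showing $\fiint|\na u|^{\theta p}\,dz\le c\la^{\theta p}$. This is the one place where the argument departs from the proof of Lemma~\ref{sec4:lem:5}; I would establish it via $\fiint|\na u|^{\theta p}\le (\fiint|\na u|^p)^{\theta}\le (\fiint|\na u|^q)^{\theta p/q}$ combined with \eqref{sec4:5}. There is no real obstacle here beyond bookkeeping: the only delicate point is that the prefactor is $\la^{(2-p)\theta p}$ rather than the $\La^{(2-p)\theta}$ of Lemma~\ref{sec4:lem:5}, and \eqref{sec4:5} is precisely what removes the $a(z_0)\la^q$ contribution and lets the smaller power of $\la$ suffice.
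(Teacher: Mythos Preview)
Your proposal is correct and follows exactly the route the paper indicates: rerun the proof of Lemma~\ref{sec4:lem:5} keeping only the $p$-part, using Lemma~\ref{sec3:lem:3}, \ref{q2}, the intermediate bound $c\la^{2-p}(\fiint(|\na u|+|F|)^{q-1}\,dz)^{(p-1)/(q-1)}$, and H\"older. Your observation that absorbing the term $\fiint|\na u|^{\theta p}\,dz$ into $\la^{(2-p)\theta p}(\fiint|\na u|^{\theta p}\,dz)^{p-1}$ requires $\fiint|\na u|^{\theta p}\,dz\le c\la^{\theta p}$ (hence \eqref{sec4:5} rather than merely \ref{q5}) is exactly the point the paper has in mind when it says the calculations are already in place.
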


As in the previous subsection, we estimate the term	
\[	
S(u,G_{\rho}^\la(z_0))=\sup_{J_{\rho}^\la(t_0)}\fint_{B^\la_{\rho}(x_0)}\frac{|u-u_{G_{\rho}^\la(z_0)}|^2}{(\la^\frac{p-2}{2}\rho)^2}\,dx.
	\]
 
\begin{lemma}\label{sec5:lem:5}
	There exists a constant $c=c(n,N,p,q,\nu,L)$, such that 
	\[
		S(u,G_{2\rho}^\la(z_0))=\sup_{J_{2\rho}^\la(t_0)}\fint_{B^\la_{2\rho}(x_0)}\frac{|u-u_{G_{2\rho}^\la(z_0)}|^2}{(2\la^\frac{p-2}{2}\rho)^2}\,dx\le c\la^2.
	\]	
\end{lemma}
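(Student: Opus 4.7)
The plan is to mirror the proof of Lemma~\ref{sec5:lem:1}, replacing the $p$-intrinsic Poincaré inequalities by their $(p,q)$-intrinsic analogues (Lemmas~\ref{sec4:lem:5} and \ref{sec4:lem:6}) and using the stopping-time conditions \ref{q3}, the comparability \ref{q2}, and the resulting bound \eqref{sec4:5}.

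First, for $2\rho\le \rho_1<\rho_2\le 4\rho$, I apply the Caccioppoli estimate of Lemma~\ref{sec3:lem:1} to $G_{\rho_1}^\la(z_0)\subset G_{\rho_2}^\la(z_0)$, viewed as $Q_{r,\tau}\subset Q_{R,\ell}$ with $R=\la^{(p-2)/2}\rho_2$ and $\ell = (\la^p/H(z_0,\la))\rho_2^2 = (\la^2/\La)R^2$ (and the analogues for $r,\tau$). Dividing the supremum term by $(\la^{(p-2)/2}\rho_1)^2$, the left-hand side becomes $(\La/\la^2)S(u,G_{\rho_1}^\la(z_0))$, so the goal reduces to showing each of the three right-hand-side terms is at most a constant multiple of $\La$, modulo absorbable error.

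Next I estimate each term in turn. For the Sobolev-type term, \ref{q2} lets me replace $a(z)$ by $a(z_0)$ on $G_{\rho_2}^\la(z_0)$ and identify the integrand with $H(z_0,\,\cdot\,)$; then Lemma~\ref{sec4:lem:5} with $\theta=1$, together with $\fiint_{G_{\rho_2}^\la(z_0)}H(z_0,|\na u|)\,dz\le 2\La$ (from \ref{q2} and \ref{q4}), produces a bound of the form $c\rho_2^q/(\rho_2-\rho_1)^q\cdot\La$. For the quadratic term I use $\ell-\tau\ge(\la^p/\La)\rho_2(\rho_2-\rho_1)$, then apply Lemma~\ref{sec2:lem:1} with $\sig=2,\,s=p,\,r=2,\,\vartheta=1/2$ (whose admissibility is exactly where $p>2n/(n+2)$ is needed), and then control the resulting $L^p$-averages of $u-u_{G_{\rho_2}^\la(z_0)}$ and $|\na u|$ via Lemma~\ref{sec4:lem:6} with $\theta=1$ together with \eqref{sec4:5} and Hölder (giving $\fiint|\na u|^p\,dz$ and $\fiint|F|^p\,dz\le c\la^p$); this yields a bound of the form $c(\La/\la)\rho_2/(\rho_2-\rho_1)\,S^{1/2}(u,G_{\rho_2}^\la(z_0))$. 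The source term is handled directly by \ref{q4}, giving $c\La$.

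Finally, multiplying through by $\la^2/\La$ and applying Young's inequality with the conjugate pair $(2,2)$ to absorb the $S^{1/2}$ factor into $\tfrac12 S(u,G_{\rho_2}^\la(z_0))$, I arrive at
\[
S(u,G_{\rho_1}^\la(z_0))\le \tfrac12 S(u,G_{\rho_2}^\la(z_0)) + c\left(\frac{\rho_2^q}{(\rho_2-\rho_1)^q}+\frac{\rho_2^2}{(\rho_2-\rho_1)^2}\right)\la^2,
\]
and an application of Lemma~\ref{sec2:lem:2} concludes the proof. The main technical obstacle is bookkeeping: keeping track of the compensating powers of $\la$, $\La$ and $H(z_0,\la)$ that arise from the $(p,q)$-intrinsic time scaling $\ell/R^2=\la^2/\La$, and making sure that the coercive factor $\La/\la^2$ on the left matches precisely the size of the Sobolev and source terms on the right so that Young's inequality leaves only absorbable remainders.
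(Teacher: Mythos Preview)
Your proposal is correct and follows essentially the same approach as the paper's proof: Caccioppoli on nested $(p,q)$-intrinsic cylinders, then Lemma~\ref{sec4:lem:5} (with $\theta=1$) for the $H$-term, Gagliardo--Nirenberg plus Lemma~\ref{sec4:lem:6} and \eqref{sec4:5} for the quadratic term, Young's inequality, and the iteration Lemma~\ref{sec2:lem:2}. One minor slip: where you invoke \ref{q4} (radius $\rho$) to bound $\fiint_{G_{\rho_2}^\la(z_0)}H(z,|\na u|)\,dz$ and the source term, you actually need \ref{q5} since $\rho_2\in(\rho,2\kappa\rho]$; the estimate you use is correct, only the label is off.
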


\begin{proof}
	Let $2\rho\le \rho_1<\rho_2\le 4\rho$. By Lemma~\ref{sec3:lem:1}, there exists a constant $c=c(n,p,q,\nu,L)$, such that
 \begin{equation}\label{sec5:61}
		\begin{split}
			&\frac{\La }{\la^2}\sup_{J_{\rho_1}^\la(t_0)}\fint_{B^\la_{\rho_1}(x_0)}\frac{|u-u_{G_{\rho_1}^\la(z_0)}|^2}{(\la^\frac{p-2}{2}\rho_1)^2}\,dx = \sup_{J_{\rho_1}^\la(t_0)}\fint_{B^\la_{\rho_1}(x_0)}\frac{\La|u-u_{G_{\rho_1}^\la(z_0)}|^2}{\la^p\rho_1^2}\,dx  \\
			&\qquad\le \frac{c\rho_2^q}{(\rho_2-\rho_1)^q}\fiint_{G_{\rho_2}^\la(z_0)}\left(\frac{|u-u_{G_{\rho_2}^\la(z_0)}|^p}{(\la^\frac{p-2}{2}\rho_2)^p}+a(z)\frac{|u-u_{G_{\rho_2}^\la(z_0)}|^q}{(\la^\frac{p-2}{2}\rho_2)^q}\right)\,dz\\
			&\qquad\qquad+\frac{c \rho_2^2}{(\rho_2-\rho_1)^2}\frac{\La }{\la^p}\fiint_{G_{\rho_2}^\la(z_0)}\frac{|u-u_{G_{\rho_2}^\la(z_0)}|^2}{\rho_2^2}\,dz\\
			&\qquad\qquad+c\fiint_{G_{\rho_2}^\la(z_0)}H(z,|F|)\,dz.
		\end{split}
  \end{equation}
	For the first term on the right-hand side of \eqref{sec5:61}, we apply Lemma~\ref{sec4:lem:5}, together with \ref{q2} and \ref{q5}, to obtain
	\begin{align*}
		\begin{split}
			&\fiint_{G_{\rho_2}^\la(z_0)}\left(\frac{|u-u_{G_{\rho_2}^\la(z_0)}|^p}{(\la^\frac{p-2}{2}\rho_2)^p}+a(z)\frac{|u-u_{G_{\rho_2}^\la(z_0)}|^q}{(\la^\frac{p-2}{2}\rho_2)^q}\right)\,dz\\
			&\qquad\le 2\fiint_{G_{\rho_2}^\la(z_0)}H\left(z_0,\frac{|u-u_{G_{\rho_2}^\la(z_0)}|}{(\la^\frac{p-2}{2}\rho_2)}\right)\,dz\\
   &\qquad\le c\fiint_{G_{\rho_2}^\la(z_0)}H(z_0,|\na u|+|F|)\,dz\le c\La ,
		\end{split}
	\end{align*}
    where $c=c(n,N,p,q,L)$.
    
	For the second term on the right-hand side of \eqref{sec5:61} we obtain by Lemma~\ref{sec2:lem:1}, as in the proof of Lemma~\ref{sec5:lem:1}, that
	\begin{align*}
		\begin{split}
			&\frac{\La}{\la^p}\fiint_{G_{\rho_2}^\la(z_0)}\frac{|u-u_{G_{\rho_2}^\la(z_0)}|^2}{\rho_2^2}\,dz=\frac{\La}{\la^2}\fiint_{G_{\rho_2}^\la(z_0)}\frac{|u-u_{G_{\rho_2}^\la(z_0)}|^2}{(\la^\frac{p-2}{2}\rho_2)^2}\,dz\\
			&\qquad\le c\frac{\La}{\la^2}\left(\fiint_{G_{\rho_2}^\la(z_0)}\left(\frac{|u-u_{G_{\rho_2}^\la(z_0)}|^p}{(\la^\frac{p-2}{2}\rho_2)^p}+|\na u|^p\right)\,dz\right)^\frac{1}{p}\left(S(u,G_{\rho_2}^\la(z_0))\right)^\frac{1}{2},
		\end{split}
	\end{align*}
 where $c=c(n,N,p)$.
	Using Lemma~\ref{sec4:lem:6} and \eqref{sec4:5}, we obtain
	\[
		\frac{\La}{\la^p}\fiint_{G_{\rho_2}^\la(z_0)}\frac{|u-u_{G_{\rho_2}^\la(z_0)}|^2}{\rho_2^2}\,dz\le c\frac{\La}{\la} S(u,G_{\rho_2}^\la(z_0))^\frac{1}{2},
	\]
 where $c=c(n,N,p,q,L)$.
Combining the estimates and applying \ref{q5} for the last term on the right-hand side of \eqref{sec5:61}, we get
\[
    S(u,G_{\rho_1}^\la(z_0)) \leq c\frac{\rho_2^q}{(\rho_2-\rho_1)^q}\la^2+c\frac{ \rho_2^2}{(\rho_2-\rho_1)^2}\la\ S(u,Q_{\rho_2}^\la(z_0))^\frac{1}{2}.
\]
The claim follows by applying Young's inequality and Lemma~\ref{sec2:lem:2} as in the proof of Lemma~\ref{sec5:lem:1}.
\end{proof}

\begin{lemma}\label{sec5:lem:7}
	There exists a constant $c=c(n,p,q)$, such that for any $\theta\in(n/(n+2),1)$ we have
	\begin{align*}
		\begin{split}
			&\fiint_{G_{2\rho}^\la(z_0)}\left(\frac{|u-u_{G_{2\rho}^\la(z_0)}|^p}{(2\la^\frac{p-2}{2}\rho)^p}+a(z)\frac{|u-u_{G_{2\rho}^\la(z_0)}|^q}{(2\la^\frac{p-2}{2}\rho)^q}\right)\,dz\\
			&\qquad\le  c\fiint_{G_{2\rho}^\la(z_0)}\left(H\left(z_0,\frac{|u-u_{G_{2\rho}^\la(z_0)}|}{2\la^\frac{p-2}{2}\rho}\right)^{\theta}+H(z_0,|\na u|)^{\theta}\right)\,dz\\
   &\qquad\qquad\times H\left(z_0,S(u,G_{2\rho}^\la(z_0))^{\frac{1}{2}}\right)^{1-\theta}.
		\end{split}
	\end{align*}
\end{lemma}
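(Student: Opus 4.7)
The plan is to mirror the strategy of Lemma~\ref{sec5:lem:2} in a $(p,q)$-intrinsic cylinder, exploiting \ref{q2} to treat $a(z)$ as essentially constant on $G_{2\rho}^\la(z_0)$.

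First, applying the upper bound $a(z)\le 2a(z_0)$ from \ref{q2}, I would reduce the left-hand side to the two pieces
\begin{align*}
\fiint_{G_{2\rho}^\la(z_0)}\frac{|u-u_{G_{2\rho}^\la(z_0)}|^p}{(2\la^{\frac{p-2}{2}}\rho)^p}\,dz
\quad\text{and}\quad
2a(z_0)\fiint_{G_{2\rho}^\la(z_0)}\frac{|u-u_{G_{2\rho}^\la(z_0)}|^q}{(2\la^{\frac{p-2}{2}}\rho)^q}\,dz,
\end{align*}
neither of which involves the spatially varying weight any longer.

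Next, to each piece I would apply Lemma~\ref{sec2:lem:1} on the spatial ball $B^\la_{2\rho}(x_0)$ slice-by-slice and then integrate in time over $J_{2\rho}^\la(t_0)$, pulling the $L^2_x$ factor out of the time integral as the supremum $S(u,G_{2\rho}^\la(z_0))$. For the first piece, the parameters $\sigma=p$, $s=\theta p$, $r=2$, $\vartheta=\theta$ satisfy the admissibility condition precisely when $\theta\ge n/(n+2)$, yielding
\begin{align*}
c\fiint_{G_{2\rho}^\la(z_0)}\!\left(\frac{|u-u_{G_{2\rho}^\la(z_0)}|^{\theta p}}{(2\la^{\frac{p-2}{2}}\rho)^{\theta p}}+|\na u|^{\theta p}\right)dz\,\cdot\, S(u,G_{2\rho}^\la(z_0))^{\frac{(1-\theta)p}{2}}.
\end{align*}
The parallel choice $\sigma=q$, $s=\theta q$, $r=2$, $\vartheta=\theta$ for the second piece gives the same admissibility and the analogous estimate with $p$ replaced by $q$ throughout, multiplied by $a(z_0)$.

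Finally, to repackage these two bounds in the claimed form, I would use the elementary comparison $(A+B)^\tau \simeq A^\tau + B^\tau$ for $\tau\in(0,1]$ to write $H(z_0,s)^\theta\simeq s^{\theta p}+a(z_0)^\theta s^{\theta q}$ and $H(z_0,S^{1/2})^{1-\theta}\simeq S^{(1-\theta)p/2}+a(z_0)^{1-\theta}S^{(1-\theta)q/2}$. Splitting $a(z_0)=a(z_0)^\theta\cdot a(z_0)^{1-\theta}$ in the $q$-term and matching the diagonal $p\times p$ and $q\times q$ contributions of the expanded product $(H(z_0,\cdot)^\theta+H(z_0,|\na u|)^\theta)\cdot H(z_0,S^{1/2})^{1-\theta}$ then absorbs both estimates into the right-hand side. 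I do not foresee a substantial obstacle, since the argument is directly parallel to Lemma~\ref{sec5:lem:2} once \ref{q2} has eliminated the $z$-dependence of the weight; the only bookkeeping to watch is that the single power of $a(z_0)$ produced by \ref{q2} splits as $a(z_0)^\theta\cdot a(z_0)^{1-\theta}$ and lands in the correct places of the product.
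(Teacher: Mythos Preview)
Your proposal is correct and follows essentially the same approach as the paper's proof: use \ref{q2} to replace $a(z)$ by $2a(z_0)$, apply Lemma~\ref{sec2:lem:1} to the $p$- and $q$-pieces with the parameter choices $(\sigma,s,r,\vartheta)=(p,\theta p,2,\theta)$ and $(q,\theta q,2,\theta)$ respectively, split $a(z_0)=a(z_0)^\theta a(z_0)^{1-\theta}$, and then bound each resulting term by the product $\bigl(\fiint H(z_0,\cdot)^\theta\,dz\bigr)\cdot H(z_0,S^{1/2})^{1-\theta}$ via the trivial inequalities $s^{\theta p}\le H(z_0,s)^\theta$, $a(z_0)^\theta s^{\theta q}\le H(z_0,s)^\theta$ and the analogous bounds for the $S$-factor. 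The paper's argument is identical in structure and detail.
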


\begin{proof}
	We obtain from \ref{q2} that
	\begin{align*}
		\begin{split}
			&\fiint_{G_{2\rho}^\la(z_0)}\left(\frac{|u-u_{G_{2\rho}^\la(z_0)}|^p}{(2\la^\frac{p-2}{2}\rho)^p}+a(z)\frac{|u-u_{G_{2\rho}^\la(z_0)}|^q}{(2\la^\frac{p-2}{2}\rho)^q}\right)\,dz\\
			&\qquad\le2\fiint_{G_{2\rho}^\la(z_0)}\left(\frac{|u-u_{G_{2\rho}^\la(z_0)}|^p}{(2\la^\frac{p-2}{2}\rho)^p}+a(z_0)\frac{|u-u_{G_{2\rho}^\la(z_0)}|^q}{(2\la^\frac{p-2}{2}\rho)^q}\right)\,dz.
		\end{split}
	\end{align*}
	As in the proof of Lemma~\ref{sec5:lem:2}, by Lemma~\ref{sec2:lem:1} there exists a constant $c=c(n,p,q)$, such that for any $\theta\in(n/(n+2),1)$ we have
	\begin{align*}
		\begin{split}
			&\fiint_{G_{2\rho}^\la(z_0)}\frac{|u-u_{G_{2\rho}^\la(z_0)}|^p}{(2\la^\frac{p-2}{2}\rho)^p}\,dz\\
			&\qquad\le
			c\fiint_{G_{2\rho}^\la(z_0)}\left(\frac{|u-u_{G_{2\rho}^\la(z_0)}|^{\theta p}}{(2\la^\frac{p-2}{2}\rho)^{\theta p}}+|\na u|^{\theta p}\right)\,dz\left(S(u,G_{2\rho}^\la(z_0))^\frac{1}{2}\right)^{(1-\theta)p}
		\end{split}
	\end{align*}
	and
	\begin{align*}
		\begin{split}
			&\fiint_{G_{2\rho}^\la(z_0)}a(z_0)\frac{|u-u_{G_{2\rho}^\la(z_0)}|^q}{(2\la^\frac{p-2}{2}\rho)^q}\,dz\\		
			&\le
			c\fiint_{G_{2\rho}^\la(z_0)}\left(a(z_0)^{\theta}\frac{|u-u_{G_{2\rho}^\la(z_0)}|^{\theta q}}{(2\la^\frac{p-2}{2}\rho)^{\theta q}}+a(z_0)^{\theta}|\na u|^{\theta q}\right)\,dz\\
			&\qquad\times a(z_0)^{1-\theta}\left(S(u,G_{2\rho}^\la(z_0))^\frac{1}{2}\right)^{(1-\theta)q}.
		\end{split}
	\end{align*}
	We conclude that
	\begin{align*}
		\begin{split}
			&\fiint_{G_{2\rho}^\la(z_0)}\left(\frac{|u-u_{G_{2\rho}^\la(z_0)}|^p}{(2\la^\frac{p-2}{2}\rho)^p}+a(z)\frac{|u-u_{G_{2\rho}^\la(z_0)}|^q}{(2\la^\frac{p-2}{2}\rho)^q}\right)\,dz\\
			&\qquad\le  c\fiint_{G_{2\rho}^\la(z_0)}\left(H_{z_0}\left(\frac{|u-u_{G_{2\rho}^\la(z_0)}|}{2\la^\frac{p-2}{2}\rho}\right)^{\theta}+H_{z_0}(|\na u|)^{\theta}\right)\,dz\\
			&\qquad\qquad \times \left(\left(S(u,G_{2\rho}^\la(z_0))^\frac{1}{2}\right)^{p}+a(z_0)\left(S(u,G_{2\rho}^\la(z_0))^\frac{1}{2}\right)^{q}\right)^{1-\theta }.
		\end{split}
	\end{align*}
	This completes the proof.	
\end{proof}
Now we are ready to show the reverse Hölder inequality in $(p,q)$-intrinsic cylinders. 
\begin{lemma}\label{sec5:lem:6}
	There exist constants $c=c(n,N,p,q,\nu,L)$ and $\theta_0=\theta_0(n,p,q)\in(0,1)$, such that for any $\theta\in(\theta_0,1)$ we have
	\begin{align*}
 \begin{split}
     &\fiint_{G_{\rho}^\la(z_0)}H(z_0,|\na u|)\,dz
			\le c\left(\fiint_{G_{2\rho}^\la(z_0)}H(z_0,|\na u|)^\theta\,dz\right)^\frac{1}{\theta }\\
   &\qquad\qquad+c\fiint_{G_{2\rho}^\la(z_0)} H(z_0,|F|)\,dz.
 \end{split}
	\end{align*}
 Moreover, we have
	\begin{align*}
		\begin{split}
			&\iint_{G_{2\cv\rho}^\la(z_0)}H(z,|\na u|)\,dz
			\le c\La^{1-\theta}\iint_{G_{2\rho}^\la(z_0)\cap \Psi(c^{-1}\La)}H(z,|\na u|)^\theta\,dz\\
			&\qquad\qquad\qquad+c\iint_{G_{2\rho}^\la(z_0)\cap \Phi(c^{-1}\La)}H(z,|F|)\,dz,
		\end{split}
	\end{align*}
	where $\Psi(\La)$ and $\Phi(\La)$ are defined in \eqref{sec2:2} and \eqref{sec2:3}.
\end{lemma}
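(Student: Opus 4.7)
The plan is to mirror the strategy of Lemma~\ref{sec5:lem:3} while using the $(p,q)$-intrinsic tools built up in this subsection. I would first apply the Caccioppoli inequality Lemma~\ref{sec3:lem:1} to the pair $G_\rho^\la(z_0) \subset G_{2\rho}^\la(z_0)$, which in the notation of Lemma~\ref{sec3:lem:1} corresponds to $R = 2\la^{\frac{p-2}{2}}\rho$, $r = \la^{\frac{p-2}{2}}\rho$, $\ell = 4\frac{\la^p}{\La}\rho^2$, $\tau = \frac{\la^p}{\La}\rho^2$, so that $R-r = \la^{\frac{p-2}{2}}\rho$ and $\ell - \tau \sim \frac{\la^2}{\La}(\la^{\frac{p-2}{2}}\rho)^2$. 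This bounds $\fiint_{G_\rho^\la} H(z,|\na u|)\,dz$ by a Poincaré-type term of the form $\fiint_{G_{2\rho}^\la}\bigl(|u-u_G|^p/(2\la^{\frac{p-2}{2}}\rho)^p + a(z)|u-u_G|^q/(2\la^{\frac{p-2}{2}}\rho)^q\bigr)\,dz$, a quadratic term $\fiint |u-u_G|^2/(\ell-\tau)\,dz$, and the source $\fiint H(z,|F|)\,dz$. Using \ref{q2} to replace $a(z)$ by $a(z_0)$ up to constants, the first two terms pass to $H(z_0,\cdot)$-quantities.

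Second, I would bound the Poincaré-type term by first applying Lemma~\ref{sec5:lem:7} to extract a factor of $H(z_0, S(u,G_{2\rho}^\la(z_0))^{1/2})^{1-\theta}$, bounded by $c\La^{1-\theta}$ via Lemma~\ref{sec5:lem:5}, and then Lemma~\ref{sec4:lem:5} to convert the remaining $H(z_0,|u-u_G|/(2\la^{\frac{p-2}{2}}\rho))^\theta$ average into $\bigl(\fiint H(z_0,|\na u|)^\theta\,dz\bigr)^{p-1}$ plus source terms, each with a prefactor $\La^{(2-p)\theta}$. Combining the exponents yields a bound of the shape
\[
c\La^{1-(p-1)\theta}\left(\fiint_{G_{2\rho}^\la(z_0)}H(z_0,|\na u|)^\theta\,dz\right)^{p-1} + \text{source term.}
\]
Third, the quadratic term carries a factor $\La/\la^2$ from $\ell - \tau$; then Gagliardo–Nirenberg (Lemma~\ref{sec2:lem:1} with $\sigma=2$, $s=\theta p$, $\vartheta=1/2$, $r=2$, valid since $p>\frac{2n}{n+2}$) together with Lemmas~\ref{sec5:lem:5} and~\ref{sec4:lem:6} reduces it to $\La$ times fractional powers of $\fiint H(z_0,|\na u|)^\theta$ and $\fiint H(z_0,|F|)$. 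Applying Young's inequality with suitable conjugate pairs produces a bound of the form $\tfrac12\La + c\bigl(\fiint H(z_0,|\na u|)^\theta\,dz\bigr)^{1/\theta} + c\fiint H(z_0,|F|)\,dz$, and the $\tfrac12\La$ is absorbed into the left-hand side via \ref{q4}, yielding the first claimed reverse Hölder estimate.

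The main obstacle will be the exponent bookkeeping: the various $\La^{\alpha}$ and $\la^\beta$ factors produced by Lemmas~\ref{sec4:lem:5}, \ref{sec5:lem:5}, and~\ref{sec5:lem:7}, together with the Caccioppoli scaling $\ell - \tau \sim \frac{\la^2}{\La}(\la^{\frac{p-2}{2}}\rho)^2$, must combine so that Young's inequality produces exactly a factor $\La$ (matching \ref{q4}) that can be absorbed — not something smaller that would waste the stopping-time equality. Once the absolute reverse Hölder estimate is proved, the distributional version follows by the same level-set splitting as in Lemma~\ref{sec5:lem:4}: split the right-hand side integrals via $\Psi(c^{-1}\La)$ and $\Phi(c^{-1}\La)$, absorb the sub-level contributions using \ref{q4}, extend the left-hand integration from $G_\rho^\la(z_0)$ up to $G_{2\cv\rho}^\la(z_0)$ using \ref{q5}, and finally use \ref{q2} to exchange $H(z_0,\cdot)$ and $H(z,\cdot)$ on both sides at the cost of universal constants.
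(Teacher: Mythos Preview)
Your proposal is correct and follows essentially the same route as the paper's proof: Caccioppoli on $G_\rho^\la \subset G_{2\rho}^\la$, then Lemmas~\ref{sec5:lem:7}, \ref{sec4:lem:5}, \ref{sec5:lem:5} for the Poincar\'e-type term, Lemma~\ref{sec2:lem:1} with Lemmas~\ref{sec5:lem:5} and~\ref{sec4:lem:6} for the quadratic term, Young and absorption via \ref{q4}, and then the level-set argument of Lemma~\ref{sec5:lem:4} for the distributional version. The one place where the paper is more explicit than your sketch is the quadratic term after Lemma~\ref{sec4:lem:6}: there you arrive at $\tfrac{\La}{\la^{p-1}}\bigl(\fiint |\na u|^{\theta p}\bigr)^{(p-1)/(\theta p)}$ plus source, and the paper splits $\tfrac{\La}{\la^{p-1}} = \la + a(z_0)\la^{q-p+1}$ and uses H\"older on each piece to match against $|\na u|^{\theta p}$ and $a(z_0)^\theta|\na u|^{\theta q}$ respectively, so that Young produces exactly $\la^p$ and $a(z_0)\la^q$ for absorption---this is the ``exponent bookkeeping'' you correctly flagged as the main obstacle.
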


\begin{proof}
	 Lemma~\ref{sec3:lem:1} gives
    \begin{equation}\label{sec5:62}
		\begin{split}
			&\fiint_{G_{\rho}^\la(z_0)}H(z,|\na u|)\,dz\\
			&\qquad\le c\fiint_{G_{2\rho}^\la(z_0)}\left(\frac{|u-u_{G_{2\rho}^\la(z_0)}|^p}{(2\la^\frac{p-2}{2}\rho)^p}+a(z_0)\frac{|u-u_{G_{2\rho}^\la(z_0)}|^q}{(2\la^\frac{p-2}{2}\rho)^q}\right)\,dz\\
			&\qquad\qquad+c\frac{\La }{\la^p}\fiint_{G_{2\rho}^\la(z_0)}\frac{|u-u_{G_{2\rho}^\la(z_0)}|^2}{(2\rho)^2}\,dz
			+c\fiint_{G_{2\rho}^\la(z_0)}H(z,|F|)\,dz.
		\end{split}
	\end{equation}
	Using Lemma~\ref{sec5:lem:7}, Lemma~\ref{sec4:lem:5} and Lemma~\ref{sec5:lem:5} for the first term on the right-hand side of \eqref{sec5:62}, we obtain
	\begin{align*}
		\begin{split}
			&\fiint_{G_{2\rho}^\la(z_0)}\left(\frac{|u-u_{G_{2\rho}^\la(z_0)}|^p}{(2\la^\frac{p-2}{2}\rho)^p}+a(z)\frac{|u-u_{G_{2\rho}^\la(z_0)}|^q}{(2\la^\frac{p-2}{2}\rho)^q}\right)\,dz\\
			&\qquad\le c\La^{1-\theta}\fiint_{G_{2\rho}^\la(z_0)}H(z_0,|\na u|)^\theta\,dz\,
			+ c\La^{1-\theta}\left( \fiint_{G_{2\rho}^\la(z_0)} H(z_0,|F|)\,dz \right)^\theta.
		\end{split}
	\end{align*}
    As in the proof of Lemma~\ref{sec5:lem:3}, we obtain from Lemma~\ref{sec5:lem:1} and Lemma~\ref{sec5:lem:5} that
	\[
			\fiint_{G_{2\rho}^\la(z_0)}\frac{|u-u_{G_{2\rho}^\la(z_0)}|^2}{(2\la^\frac{p-2}{2}\rho)^2}\,dz
			\le c\la\left(\fiint_{G_{2\rho}^\la(z_0)}\left(\frac{|u-u_{G_{2\rho}^\la(z_0)}|^{\theta p}}{(2\la^\frac{p-2}{2}\rho)^p}+|\na u|^{\theta p}\right)\,dz\right)^\frac{1}{\theta p}.
	\]
	We conclude from Lemma~\ref{sec4:lem:6} that
	\begin{align*}
 \begin{split}
     &\fiint_{G_{2\rho}^\la(z_0)}\frac{|u-u_{G_{2\rho}^\la(z_0)}|^{2}}{(2\la^\frac{p-2}{2}\rho)^2}\,dz\le c\lambda^{3-p}\left(\fiint_{G_{2\rho}^\la(z_0)}|\na u|^{\theta p}\,dz\right)^\frac{p-1}{\theta p}\\
     &\qquad\qquad+c\lambda^{3-p}\left(\fiint_{G_{2\rho}^{\la}(z_0)}|F|^p\,dz\right)^\frac{p-1}{p}.
 \end{split}
	\end{align*}
    Therefore, we have for the second term on the right-hand side of \eqref{sec5:62} that
    \begin{align}\label{revh2}
    	\begin{split}
    		&\frac{\La }{\la^p} \fiint_{G_{2\rho}^\la(z_0)}\frac{|u-u_{G_{2\rho}^\la(z_0)}|^2}{(2\rho)^2}\,dz = \frac{\La }{\la^2} \fiint_{G_{2\rho}^\la(z_0)}\frac{|u-u_{G_{2\rho}^\la(z_0)}|^2}{(2\la^\frac{p-2}{2}\rho)^2}\,dz\\  
    		&\qquad\le \frac{\La }{\la^{p-1}}\left(\fiint_{G_{2\rho}^\la(z_0)}|\na u|^{\theta p}\,dz\right)^\frac{p-1}{\theta p}+c\frac{\La }{\la^{p-1}}\left(\fiint_{G_{2\rho}^{\la}(z_0)}|F|^p\,dz\right)^{\frac{p-1}{p}}.
    	\end{split}
    \end{align}
    Note that by Hölder's inequality
    \begin{align*}
    	\begin{split}
    		&\frac{\La }{\la^{p-1}}\left(\fiint_{G_{2\rho}^\la(z_0)}|\na u|^{\theta p}\,dz\right)^\frac{p-1}{\theta p}
    		\le \la\left(\fiint_{G_{2\rho}^\la(z_0)}|\na u|^{\theta p}\,dz\right)^\frac{p-1}{\theta p}\\
    		&\qquad\qquad+\left(a(z_0)\la^q\right)^\frac{q-p+1}{q}\left(\fiint_{G_{2\rho}^{\la}(z_0)}a(z_0)^\theta|\na u|^{\theta q}\,dz\right)^\frac{p-1}{\theta q}.
    	\end{split}
    \end{align*}
    Using a similar argument for $|F|$, we conclude from \eqref{revh2} that
    \begin{align*}
    	\begin{split}
    		&\frac{\La }{\la^p}\fiint_{G_{2\rho}^\la(z_0)}\frac{|u-u_{G_{2\rho}^\la(z_0)}|^2}{(2\rho)^2}\,dz\\
    		&\qquad\le \la\left(\fiint_{G_{2\rho}^\la(z_0)}|\na u|^{\theta p}\,dz\right)^\frac{p-1}{\theta p}\\
    		&\qquad\qquad+\left(a(z_0)\la^q\right)^\frac{q-p+1}{q}\left(\fiint_{G_{2\rho}^{\la}(z_0)}a(z_0)^\theta|\na u|^{\theta q}\,dz\right)^\frac{p-1}{\theta q}\\
    		 &\qquad\qquad+\la\left(\fiint_{G_{2\rho}^\la(z_0)}|F|^{ p}\,dz\right)^\frac{p-1}{p}\\
    		&\qquad\qquad+\left(a(z_0)\la^q\right)^\frac{q-p+1}{q}\left(\fiint_{G_{2\rho}^{\la}(z_0)}a(z_0)|F|^{q}\,dz\right)^\frac{p-1}{ q}.
    	\end{split}    	
    \end{align*}

	Collecting the estimates for the terms in \eqref{sec5:62} and applying Young's inequality and \ref{q2}, we obtain
	\begin{align*}
		\begin{split}
			&\fiint_{G_{\rho}^\la(z_0)}H(z,|\na u|)\,dz
			\le \frac{1}{2}\La 
   +c\left(\fiint_{G_{2\rho}^\la(z_0)}H(z,|\na u|)^{\theta}\,dz\right)^{\frac{1}{\theta}}\\
			&\qquad\qquad\qquad+c\fiint_{G_{2\rho}^\la(z_0)}H(z,|F|)\,dz.
		\end{split}
	\end{align*}
	We use \ref{q4} to absorb $\tfrac{1}{2}\La$ into the left hand side. This completes the proof of the first statement. 

To show the second statement, observe that as in the proof of Lemma~\ref{sec5:lem:4}, we obtain from the first statement that
	\begin{align*}
		\begin{split}
			&\fiint_{G_{\rho}^\la(z_0)}(H(z,|\na u|)+H(z,|F|))\,dz\\
			&\qquad\le \frac{1}{2}\La+\frac{c\La^{(1-\theta)}}{|G_{2\rho}^\la|}\iint_{G_{2\rho}^\la(z_0)\cap \Psi((4c)^{-1/\theta}\La)}H(z,|\na u|)^{\theta }\,dz\\
			&\qquad\qquad +\frac{c}{|G_{2\rho}^\la|}\iint_{G_{2\rho}^\la(z_0)\cap \Phi((4c)^{-1}\La)}H(z,|F|)\,dz.
		\end{split}
	\end{align*}
	It follows from \ref{q3} that
	\begin{align*}
		\begin{split}
			&\fiint_{G_{2\cv\rho}^\la(z_0)}(H(z,|\na u|)+H(z,|F|))\,dz\\
			&\qquad\le 2c\frac{\La^{(1-\theta)}}{|G_{2\rho}^\la|}\iint_{G_{2\rho}^\la(z_0)\cap \Psi((4c)^{-1/\theta}\La)}H(z,|\na u|)^{\theta }\,dz\\
			&\qquad\qquad +\frac{2c}{|G_{2\rho}^\la|}\iint_{G_{2\rho}^\la(z_0)\cap \Phi((4c)^{-1}\La)}H(z,|F|)\,dz,
		\end{split}
	\end{align*}
    and we have
    \begin{align*}
    	\begin{split}
    		&\iint_{G_{2\cv\rho}^\la(z_0)}H(z,|\na u|)\,dz\\
    		&\qquad\le c\La^{(1-\theta)}\iint_{G_{2\rho}^\la(z_0)\cap \Psi((4c)^{-1/\theta}\La)}H(z,|\na u|)^{\theta }\,dz\\
    		&\qquad\qquad +c\iint_{G_{2\rho}^\la(z_0)\cap \Phi((4c)^{-1}\La)}H(z,|F|)\,dz.
    	\end{split}
    \end{align*}
    This completes the proof.

\end{proof}

\section{Proof of the main result}
In this section we complete the proof of Theorem~\ref{main_theorem}. In the first subsection, we use a stopping time argument to construct intrinsic cylinders which are either $p$-intrinsic, as in \ref{p1}-\ref{p2}, or $(p,q)$-intrinsic, as in \ref{q1}-\ref{q3}. In the second subsection, we construct a Vitali type covering for this collection of intrinsic cylinders. Also here the decay estimate of Lemma~\ref{lemma_decay} is needed to show the covering property of the intrinsic cylinders. In the last subsection, we complete the proof of the gradient estimate by applying Fubini's theorem together with Lemma~\ref{sec2:lem:2}.

\subsection{Stopping time argument}\label{stopping time argument}
Let
\begin{equation}\label{sec6:01}
\begin{split}
		\la_0^{\frac{p(n+2)-2n}{2}}&=\fiint_{Q_{2r}(z_0)}\left(H(z,|\na u|)+H(z,|F|)\right)\,dz+1,\\
		\La_0&=\la_0^p+\sup_{z\in Q_{2r}(z_0)}a(z)\la_0^q.
  \end{split}
\end{equation}
Moreover, recalling the definition of $M_1$ in \eqref{def_M1}, let
\begin{align}\label{sec6:1}
	K=2+40[a]_{\alpha} M_1^\frac{\alpha}{n+2}\text{and}\quad
 \cv=10K.
\end{align}
Recalling the notation in \eqref{sec2:2} and \eqref{sec2:3}, for $\rho\in[r,2r]$ we denote
\[
\Psi(\La,\rho)=\Psi(\La)\cap Q_{\rho}(z_0)=\{ z\in Q_{\rho}(z_0): H(z,|\na u(z)|)>\La\} 
\]
and
\[
\Phi(\La,\rho)=\Phi(\La)\cap Q_{\rho}(z_0)
=\{ z\in Q_{\rho}(z_0): H(z,|F(z)|)>\La\}.
\]

Next, we apply a stopping time argument. Let $r\le r_1<r_2\le 2r$ and
\begin{align}\label{sec6:4}
	\La>\left(\frac{4\cv r}{r_2-r_1}\right)^\frac{2q(n+2)}{p(n+2)-2n}\La_0,
\end{align}
where $\cv$ is as in \eqref{sec6:1}.
For every $ w\in \Psi(\La,r_1)$, let $\law>0$ be such that
\begin{align}\label{sec6:5}
	\La=\law^p+a( w)\law^q.
\end{align}
We claim that 
\begin{align}\label{sec6:5_1}
	\law>\left(\frac{4\cv r}{r_2-r_1}\right)^\frac{2(n+2)}{p(n+2)-2n}\la_0.
\end{align}
For a contradiction, assume that the inequality above does not hold. Then
\[
	\La=\law^p+a(w)\law^q \leq \left(\frac{4\cv r}{r_2-r_1}\right)^\frac{2q(n+2)}{p(n+2)-2n}\left(\la_0^p+a( w)\la_0^q\right)\le \left(\frac{ 4\cv r}{r_2-r_1}\right)^\frac{2q(n+2)}{p(n+2)-2n}\La_0,
\]
which is a contradiction with \eqref{sec6:4}.
Therefore, \eqref{sec6:5_1} is true and we have for every $s\in [(r_2-r_1)/(2\cv),r_2-r_1)$ that
\begin{align*} 
	\begin{split}
		&\fiint_{Q_s^{\law}( w)}(H(z,|\na u|)+H(z,|F|))\,dz\\
		&\qquad\le\law^\frac{n(2-p)}{2}\left(\frac{2r}{s}\right)^{n+2}\fiint_{Q_{2r}(z_0)}(H(z,|\na u|)+H(z,|F|))\,dz\\
		&\qquad\le\left(\frac{4\cv r}{r_2-r_1}\right)^{n+2}\law^\frac{n(2-p)}{2}\la_0^{\frac{p(n+2)-2n}{2}}\\
        &\qquad<\left(\frac{4\cv r}{r_2-r_1}\right)^{n+2}\law^\frac{n(2-p)}{2}\left(\frac{4\cv r}{r_2-r_1}\right)^{-n-2}\la_w^{\frac{p(n+2)-2n}{2}} = \law^p.
	\end{split}
\end{align*}
By \eqref{sec6:5} we have $ w\in \Psi(\law^p,r_1)$. Therefore, by the Lebesgue differentiation theorem there exists $\rho_{ w}\in(0,(r_2-r_1)/(2\cv))$, such that
\[
	\fiint_{Q_{\rho_{ w}}^{\law}( w)}(H(z,|\na u|)+H(z,|F|))\,dz= \law^p
\]
and 
\begin{align}\label{sec6:7}
	\fiint_{Q_s^{\law}( w)}(H(z,|\na u|)+H(z,|F|))\,dz<\law^p
\end{align}
for every $s\in(\rho_{ w},r_2-r_1)$. This shows that at each point $ w\in \Psi(\La,r_1)$ we have a $p$-intrinsic cylinder satisfying \ref{p2}.

Next, we assume that 
\begin{align}\label{pq_phase}
    K\law^p< a( w)\law^q
\end{align}
and show that in this case there exists a $(p,q)$-intrinsic cylinder satisfying \ref{q2} and \ref{q3}. For every $s\in [\rho_{ w},r_2-r_1)$, we have by \eqref{def_Q_cylinder}, \eqref{def_G_cylinder}, \eqref{sec6:5} and \eqref{sec6:7} that
\begin{align*}
\begin{split}
    &\fiint_{G_{s}^{\law}( w)}(H(z,|\na u|)+H(z,|F|))\,dz\\
		&\qquad<\frac{\La}{\law^p}\fiint_{Q_{s}^{\law}( w)}(H(z,|\na u|)+H(z,|F|))\,dz\le \La.
  \end{split}
\end{align*}
Recall that $w\in \Psi(\La,r_1)$. Again by the Lebesgue differentiation theorem, we find $\varsigma_{w}\in(0,\rho_{w}]$ such that
\[
	\fiint_{G_{\varsigma_{w}}^{\la_w}(w)}H(z,|\na u|)+H(z,|F|)\,dz= H(w,\la_w)
\]
and
\[
	\fiint_{G_s^{\la_w}(w)}(H(z,|\na u|)+H(z,|F|))\,dz<H(w,\la_w)
\]
for every $s\in(\varsigma_{w},r_2-r_1)$.

To show \ref{q2}, we claim that
\begin{align}\label{pq_property1}
    a( w)\ge 2[a]_{\alpha}(10\rho_{ w})^\alpha.
\end{align}
Assume for contradiction that the opposite holds. By \eqref{pq_phase} and the negation of \eqref{pq_property1}, we have
\[
	K\law^p< 20[a]_{\alpha}\rho_{ w}^{\alpha}\law^q.
\]
As \eqref{sec6:7} holds true also in this case, Lemma~\ref{lemma_decay} gives
\[
	K\law^p\le 20[a]_{\alpha}\frac{K}{40[a]_\alpha}\law^p \leq \frac{K}{2}\law^p.
\]
This is a contradiction and therefore \eqref{pq_property1} is true. It follows from \eqref{pq_property1}, that
\[
	2[a]_{\alpha}(10\rho_{ w})^\alpha\le a( w)\le \inf_{Q_{10\rho_{ w}}( w)}a(z)+[a]_{\alpha}(10\rho_{ w})^\alpha
\]
and
\[
	\sup_{Q_{10\rho_{ w}}( w)}a(z)\le \inf_{Q_{10\rho_{ w}}( w)}a(z)+[a]_{\alpha}(10\rho_{ w})^\alpha\le 2\inf_{Q_{10\rho_{ w}}( w)}a(z).
\]
Therefore, when \eqref{pq_phase} is true
\[
	\frac{a( w)}{2}\le a(z)\le 2a( w)
	\text{ for every }
	z\in Q_{10\rho_{ w}}( w).
\]
As $\varsigma_w \leq \rho_w$, we have shown the properties \ref{q1}-\ref{q3}.

\subsection{Vitali type covering argument}
For each $ w\in \Psi(\La,r_1)$, we consider
\[
\mQ ( w)= \begin{cases}
Q_{2\rho_{ w}}^{\law}( w)&\text{ in $p$-intrinsic case,}\\    G_{2\varsigma_{ w}}^{\law}( w)&\text{ in $(p,q)$-intrinsic case.}
\end{cases}
\]
We prove a Vitali type covering lemma for this collection of intrinsic cylinders.
We denote
\[
	\mathcal{F}=\left\{\mQ ( w):  w\in \Psi(\La,r_1)\right\}
	\quad\text{and}\quad 
	l_{ w}=
     \begin{cases}
		2\rho_{w}&\text{in $p$-intrinsic case,}\\
		2\varsigma_{w}&\text{in $(p,q)$-intrinsic case.}
	\end{cases}
\]
Recall that $l_{ w}\in (0,R)$ for every $ w\in \Psi(\La,r_1)$, where $R=(r_2-r_1)/\cv$ and $\cv$ is as in \eqref{sec6:1}. Let
\[
	\mathcal{F}_j=\left\{\mQ ( w)\in \mathcal{F}: \frac{R}{2^j}<l_{ w}\le\frac{R}{2^{j-1}} \right\},\quad j\in\mathbb N.
\]
We construct subcollections $\mathcal{G}_j\subset \mathcal{F}_j$, $j\in\mathbb N$, recursively as follows. Let $\mathcal{G}_1$ be a maximal disjoint collection of cylinders in $\mathcal{F}_1$.
Observe that for each $\mQ(w)\in \mathcal{F}_j$ we have
\[
 \left(\frac{R}{2^j}\right)^{n+2}\La^{-1}\leq |\mQ(w)|,
\]
which implies that the collection is finite. Suppose that we have selected $\mathcal{G}_1,...,\mathcal{G}_{k-1}$ with $k\ge2$, and let
\[
	\mathcal{G}_k=\Bigl\{ \mQ ( w)\in \mathcal{F}_k: \mQ ( w)\cap \mQ ( v)=\emptyset\text{ for every }\mQ ( v)\in \bigcup_{j=1}^{k-1}\mathcal{G}_j\Bigr\}    
\]
be a maximal collection of pairwise disjoint cylinders.
It follows that
\begin{align}\label{sec6:7_2}
	\mathcal{G}=\bigcup_{j=1}^\infty\mathcal{G}_j,
\end{align}
is a countable subcollection of pairwise disjoint cylinders in $\mathcal{F}$. We claim that for each $\mQ ( w)\in \mathcal{F}$, there exists $\mQ ( v)\in \mathcal{G}$ such that
\begin{align}\label{sec6:1_2}
	\mQ ( w)\cap \mQ ( v)\ne\emptyset
	\quad\text{and}\quad 
	\mQ ( w)\subset \cv\mQ ( v).
\end{align}
For every $\mQ ( w)\in \mathcal{F}$, there exists $j \in \mathbb{N}$ such that $\mQ ( w)\in \mathcal{F}_j$. 
By the construction of $\mathcal{G}_j$, there exists a cylinder 
$\mQ ( v)\in\cup_{i=1}^j \mathcal{G}_i$
for which the first condition in \eqref{sec6:1_2} holds true. 
Moreover, since $l_{ w}\le \tfrac{R}{2^{j-1}}$
and $l_{ v} \geq \tfrac{R}{2^j}$, we have 
\begin{align}\label{sec6:3_2}
	l_{ w}\le 2l_{ v}.
\end{align}

In the remaining of this subsection, we prove the second claim in \eqref{sec6:1_2}. First, we show the comparability of $\la_w$ and $\lav$ using the following observations. For $i\in\{v,w\}$, there exist $2\rho_i\ge l_i>0$ and $\la_i>0$ such that
\begin{align}\label{sec6:14}
	\La=\la_i^p+a(z_i)\la_i^q
\end{align}
and
\begin{align}\label{sec6:15}
	\fiint_{Q_{\rho_i}^{\la_i}(z_i)}(H(z,|\na u|)+H(z,|F|))\,dz=\la_i^p.
\end{align}
It follows from~\eqref{sec6:15} and \eqref{sec6:1544} that
\begin{align} \label{sec6:154}
\rho_i^\alpha\la_i^q\le \frac{K}{40[a]_\alpha}\la_i^p.
\end{align}
Moreover, the first condition in~\eqref{sec6:1_2} and \eqref{sec6:3_2} imply that $Q_{l_{w}}(w) \cap Q_{l_{v}}(v) \neq \emptyset$ and 
\[
Q_{l_{w}}(w) \subset Q_{5l_{v}}(v)\subset Q_{10\rho_v}(v).
\]
Therefore, we have by~\eqref{def_holder} that
\begin{equation} \label{sec6:153}
| a(w) - a(v) | \leq [a]_\alpha (10  \rho_v)^\alpha.
\end{equation}

Now we show the comparability of $\la_w$ and $\lav$. First, we claim that if $\la_w \le \la_v$, then
\begin{align}\label{sec6:18}
	\la_v\le K^\frac{1}{p}\la_w.
\end{align}
For a contradiction, assume that \eqref{sec6:18} does not hold. By \eqref{sec6:14} and \eqref{sec6:153}, we have
\begin{align*}
	\begin{split}
		\La=\la_w^p+a(w)\la_w^q\le \la_w^p+a(v)\la_w^q+[a]_{\alpha}(10\rho_v)^{\alpha}\la_w^q.
	\end{split}
\end{align*}
From the negation of \eqref{sec6:18} and \eqref{sec6:154}, we obtain
\[
10[a]_{\alpha}\rho_v^\alpha \lambda_w^q 
< \frac{1}{K^\frac{q}{p}}10[a]_{\alpha}\rho_v^\alpha\lambda_v^q 
\leq  \frac{1}{K^\frac{q}{p}}10[a]_{\alpha}\frac{K}{40[a]_\alpha}\la_v^p
\leq \frac{1}{2} \lambda_v^p.
\]
Negation of \eqref{sec6:18} and the above estimates lead to the contradiction
\[
	\La < \frac{1}{2}\left(\la_v^p+a(v)\la_v^q\right)+\frac{1}{2}\la_v^p\le\La,
\]
and thus \eqref{sec6:18} holds.

On the other hand, if $\la_v\le \la_w$, we claim that
\[
    \la_w\le K^\frac{1}{p}\la_v.
\]
Again, assume for contradiction that the opposite holds. It follows from \eqref{sec6:154} that
\begin{align*}
\begin{split}
    \La&=\la_v^p+a(v)\la_v^q\le \la_v^p+a(w)\la_v^q+[a]_{\alpha}(10\rho_v)^\alpha\la_v^q\\
    &\le \la_v^p+a(w)\la_v^q+\frac{K}{4}\la_v^p\\
    &< \frac{1}{K}\la_w^p+\frac{1}{K^\frac{q}{p}}a(w)\la_w^q+\frac{1}{4}\la_w^p\leq \La,
\end{split}
\end{align*}
which is a contradiction. We conclude that
\begin{equation} \label{sec6:181}
K^{-\frac{1}{p}}\la_w\le \la_v \leq K^\frac{1}{p} \la_w.
\end{equation}

We show that the second claim in~\eqref{sec6:1_2} holds in all four possible cases that may occur:
\begin{enumerate}[label=(\roman*)]
\item\label{i} $\mQ (v)=Q_{l_v}^{\la_v}(v)$ and $\mQ (w)=Q_{l_w}^{\la_w}(w)$,
\item\label{ii} $\mQ (v)=G_{l_v}^{\la_v}(v)$ and $ \mQ (w)=G_{l_w}^{\la_w}(w)$,
\item\label{iii} $\mQ (v)=G_{l_v}^{\la_v}(v)$ and $ \mQ (w)=Q^{\la_w}_{l_w}(w)$ or
\item\label{iv} $\mQ (v)=Q_{l_v}^{\la_v}(v)$ and $\mQ (w)=G_{l_w}^{\la_w}(w)$.
\end{enumerate} 
 Proof for the spatial inclusion is the same in all the cases. We denote $v=(x_v,t_v)$ and $w=(x_w,t_w)$, where $x_v,x_w\in \RR^n$ and $t_v,t_w\in \RR$, and recall the notation in \eqref{def_Q_cylinder} and \eqref{def_G_cylinder}. For any $\xi \in B_{l_w}^{\la_w}(w)$ we have by \eqref{sec6:3_2} and \eqref{sec6:181}, that
 \begin{align*}
 \begin{split}
     |\xi-x_v|&\leq |\xi-x_w|+|x_w-x_v|\leq 2l_w\la_w^\frac{p-2}{2}+l_v\la_v^\frac{p-2}{2}\\
     &\leq 4K^\frac{2-p}{2p} l_v\la_v^\frac{p-2}{2}+l_v\la_v^\frac{p-2}{2}\leq 6Kl_v\la_v^\frac{p-2}{2},
    \end{split}
 \end{align*}
and therefore $B_{l_w}^{\la_w}(w) \subset 6KB_{l_v}^{\la_v}(v)$.

We show the inclusion in time direction in the four possible cases separately. In case \ref{i}, we have  by \eqref{sec6:3_2} for any $\tau\in I_{l_w}(t_w)$ that
\[
    |\tau - t_v|\leq |\tau - t_w| + |t_w - t_v|\leq 2l_w^2+ l_v^2\leq 9l_v^2,    
\]
and therefore $I_{l_w}(t_w) \subset 9I_{l_v}(t_v)$.

In case \ref{ii}, we have by \eqref{sec6:3_2} and \eqref{sec6:181} for any $\tau\in J^{\law}_{l_w}(t_w)$ that
\[
    |\tau - t_v|\leq |\tau - t_w| + |t_w - t_v|\leq 2\frac{\la_w^p}{\La}l_w^2+ \frac{\la_v^p}{\La}l_v^2\leq 9K\frac{\la_v^p}{\La}l_v^2,    
    \]
and therefore $J^{\law}_{l_w}(t_w)\subset 9KJ^{\la_v}_{l_v}(t_v)$.

In case \ref{iii} we have $K\la_w^p\ge a(w)\la_w^q$, which along with \eqref{sec6:181} and \eqref{sec6:14} gives
\[
    1 = \frac{2\lav^p}{2\lav^p}\leq \frac{2K\lav^p}{2\law^p} \leq \frac{2K\lav^p}{\law^p+K^{-1}a(w)\law^q}\leq  \frac{2K^2\lav^p}{\La}.
\]
Therefore, we have for any $\tau\in I_{\rho_w}(t_w)$ that
\begin{equation*}
|\tau - t_v|  \leq |\tau - t_w| + |t_w - t_v|\leq 2 l_w^2 + \frac{\la_v^p}{\La} l_v^2\leq \frac{17K^2\lav^p}{\La} l_v^2.
\end{equation*}
Together with the spatial inclusion this implies $Q_{l_w}^{\la_w}(w)\subset 6KG_{l_v}^{\la_v}(v)$.

Finally, in case \ref{iv} we have by \eqref{sec6:3_2} and \eqref{sec6:14} for any 
 $\tau \in J_{l_w}^{\la_w}(t_w)$ that
\[
    |\tau - t_v|\leq |\tau - t_w| + |t_w - t_v|\leq 2 \frac{\la_w^p}{\La}l_w^2 + l_v^2 \leq 9 l_v^2 ,
\]
and therefore $J_{l_w}^{\law}(t_w) \subset 9KI_{l_v}(t_v)$. We have covered every case and conclude that \eqref{sec6:1_2} holds.

\subsection{Final proof of the gradient estimate}
We write the countable pairwise disjoint collection $\mathcal{G}$ defined in \eqref{sec6:7_2} as
$\mathcal{G}=\cup_{j=1}^\infty\mQ _j$,
where $\mQ _j=\mQ ( w_j)$ with $ w_j \in \Psi(\Lambda,r_1)$. By Lemma~\ref{sec5:lem:4} and Lemma~\ref{sec5:lem:6}, there exist $c=c(\data)$ and $\theta_0=\theta_0(n,p,q)\in(0,1)$, such that
\begin{align*}
\begin{split}
    &\iint_{\cv\mQ _{j}}H(z,|\na u|)\,dz
		\le c\La^{1-\theta}\iint_{\mQ _j\cap \Psi(c^{-1}\La)}H(z,|\na u|)^\theta\,dz\\
  &\qquad\qquad+c\iint_{\mQ _j\cap \Phi(c^{-1}\La)}H(z,|F|)\,dz
\end{split}
\end{align*}
for every $j\in\mathbb{N}$ with $\theta= (\theta_0+1)/2$.
By summing over $j$ and applying the fact that the cylinders in $\mathcal{G}$ are pairwise disjoint, we obtain
\begin{equation}\label{sec6:25}
\begin{split}
&\iint_{\Psi(\La,r_1)}H(z,|\na u|)\,dz
\le\sum_{j=1}^\infty\iint_{\cv\mQ _{j}}H(z,|\na u|)\,dz\\
&\qquad\le c\La^{1-\theta}\sum_{j=1}^\infty\iint_{\mQ _j\cap \Psi(c^{-1}\La)}H(z,|\na u|)^\theta\,dz\\
&\qquad\qquad+c\sum_{j=1}^\infty\iint_{\mQ _j\cap \Phi(c^{-1}\La)}H(z,|F|)\,dz\\
&\qquad\le c\La^{1-\theta}\iint_{\Psi(c^{-1}\La,r_2)}H(z,|\na u|)^\theta\,dz
+c\iint_{\Phi(c^{-1}\La,r_2)}H(z,|F|)\,dz.
\end{split}
\end{equation}
Moreover, since
\[
\iint_{\Psi(c^{-1}\La,r_1)\setminus \Psi(\La,r_1)}H(z,|\na u|)\,dz
\le\La^{1-\theta}\iint_{\Psi(c^{-1}\La,r_2)}H(z,|\na u|)^{\theta}\,dz,
\]
we conclude from \eqref{sec6:25} that
\begin{equation}\label{sec6:26}
	\begin{split}
		&\iint_{\Psi(c^{-1}\La,r_1)}H(z,|\na u|)\,dz\\
		&\qquad\le c\La^{1-\theta}\iint_{\Psi(c^{-1}\La,r_2)}H(z,|\na u|)^\theta\,dz
		+c\iint_{\Phi(c^{-1}\La,r_2)}H(z,|F|)\,dz.
	\end{split}
\end{equation}

For $k\in\mathbb N$, let
\[		
H(z,|\na u|)_k=\min\{H(z,|\na u|),k\}
\]
and
\[
\Psi_k(\La,\rho)=\{z\in Q_{\rho}(z_0):H(z,|\na u(z)|)_k>\La\}.
\]
It is easy to see that if $\La>k$, then $\Psi_k(\La,\rho)=\emptyset$, and if $\La\le k$, then $\Psi_k(\La,\rho)=\Psi(\La,\rho)$. Therefore, we deduce from \eqref{sec6:26} that
\begin{align*}
	\begin{split}
		&\iint_{\Psi_k(c^{-1}\La,r_1)}\left(H(z,|\na u|)_k\right)^{1-\theta}H(z,|\na u|)^\theta\,dz\\
		&\qquad\le c\La^{1-\theta}\iint_{\Psi_k(c^{-1}\La,r_2)}H(z,|\na u|)^\theta\,dz+c\iint_{\Phi(c^{-1}\La,r_2)}H(z,|F|)\,dz.
	\end{split}
\end{align*}

Recalling \eqref{sec6:4}, we denote
\[
	\La_1=c^{-1}\left(\frac{4\cv r}{r_2-r_1}\right)^\frac{q(n+2)}{p(n+2)-2n}\La_0.
\]
Then for any $\La>\La_1$, we obtain
\begin{align*}
	\begin{split}
		&\iint_{\Psi_k(\La,r_1)}\left(H(z,|\na u|)_k\right)^{1-\theta}H(z,|\na u|)^\theta\,dz\\
		&\qquad\le c\La^{1-\theta}\iint_{\Psi_k(\La,r_2)}H(z,|\na u|)^\theta\,dz+c\iint_{\Phi(\La,r_2)}H(z,|F|)\,dz.
	\end{split}
\end{align*}

Let $\ep\in(0,1)$ to be chosen later. We multiply the inequality above by $\La^{\ep-1}$ and integrate each term over $(\La_1,\infty)$, which implies
\begin{align*}
	\begin{split}
		\mathrm{I}&=\int_{\La_1}^{\infty}\La^{\ep-1}\iint_{\Psi_k(\La,r_1)}\left(H(z,|\na u|)_k\right)^{1-\theta}H(z,|\na u|)^\theta\,dz\,d\La\\
		&\le c\int_{\La_1}^{\infty}\La^{\ep-\theta}\iint_{\Psi_k(\La,r_2)}H(z,|\na u|)^\theta\,dz\,d\La
		+c\int_{\La_1}^{\infty}\La^{\ep-1}\iint_{\Phi(\La,r_2)}H(z,|F|)\,dz\,d\La \\
		&= \mathrm{II}+ \mathrm{III}.
	\end{split}
\end{align*}

We apply Fubini's theorem to estimate $\mathrm{I}$ and obtain
\begin{align*}
	\begin{split}
		\mathrm{I}
		&=\frac{1}{\ep}\iint_{\Psi_k(\La_1,r_1)}\left(H(z,|\na u|)_k\right)^{1-\theta+\ep}H(z,|\na u|)^\theta\,dz\\
		&\qquad-\frac{1}{\ep}\La_1^\ep\iint_{\Psi_k(\La_1,r_1)}\left(H(z,|\na u|)_k\right)^{1-\theta}H(z,|\na u|)^\theta\,dz.
	\end{split}
\end{align*}
Since 
\begin{align*}
	\begin{split}
		&\iint_{Q_{r_1}(z_0)\setminus \Psi_k(\La_1,r_1)}\left(H(z,|\na u|)_k\right)^{1-\theta+\ep}H(z,|\na u|)^\theta\,dz\\
		&\qquad\le \La_1^{\ep}\iint_{Q_{2r}(z_0)}\left(H(z,|\na u|)_k\right)^{1-\theta}H(z,|\na u|)^\theta\,dz,
	\end{split}
\end{align*}
we have
\begin{align*}
	\begin{split}
		\mathrm{I}\ge& \frac{1}{\ep}\iint_{Q_{r_1}(z_0)}\left(H(z,|\na u|)_k\right)^{1-\theta+\ep}H(z,|\na u|)^\theta\,dz\\
		&\qquad-\frac{2}{\ep}\La_1^\ep\iint_{Q_{2r}(z_0)}\left(H(z,|\na u|)_k\right)^{1-\theta}H(z,|\na u|)^\theta\,dz.
	\end{split}
\end{align*}
Similarly, by Fubini's theorem, we have
\[
		\mathrm{II}
		\le\frac{1}{1-\theta+\ep}\iint_{Q_{r_2}(z_0)}\left(H(z,|\na u|)_k\right)^{1-\theta+\ep}H(z,|\na u|)^\theta \,dz
\]
and
\[
		\mathrm{III}\le \frac{1}{\ep}\iint_{Q_{2r}(z_0)}H(z,|F|)^{1+\ep}\,dz.
\]

By combining the estimates above, we obtain
\begin{align*}
	\begin{split}
		&\iint_{Q_{r_1}(z_0)}\left(H(z,|\na u|)_k\right)^{1-\theta+\ep}H(z,|\na u|)^\theta\,dz\\
		&\qquad\le \frac{c\ep}{1-\theta+\ep}\iint_{Q_{r_2}(z_0)}\left(H(z,|\na u|)_k\right)^{1-\theta+\ep}H(z,|\na u|)^\theta \,dz\\
		&\qquad\qquad+c\La_1^\ep\iint_{Q_{2r}(z_0)}\left(H(z,|\na u|)_k\right)^{1-\theta}H(z,|\na u|)^\theta\,dz\\
		&\qquad\qquad+c\iint_{Q_{2r}(z_0)}H(z,|F|)^{1+\ep}\,dz.
	\end{split}
\end{align*}
We choose $\ep_0=\ep_0(\data)\in(0,1)$ so that for any $\ep\in(0,\ep_0)$,
\[
	\frac{c\ep}{1-\theta+\ep}\le\frac{1}{2}.
\]
Then, by applying Lemma~\ref{sec2:lem:2} we get
\begin{align*}
	\begin{split}
		&\iint_{Q_{r}(z_0)}\left(H(z,|\na u|)_k\right)^{1-\theta+\ep}H(z,|\na u|)^\theta\,dz\\
		&\qquad\le c\La_0^\ep\iint_{Q_{2r}(z_0)}\left(H(z,|\na u|)_k\right)^{1-\theta}H(z,|\na u|)^\theta\,dz+c\iint_{Q_{2r}(z_0)}H(z,|F|)^{1+\ep}\,dz.
	\end{split}
\end{align*}
The claim follows by letting $k\longrightarrow\infty$ and recalling \eqref{sec6:01}.

\medskip
\textbf{Acknowledgments.} The authors would like to thank Kristian Moring for helpful discussions about this problem. The second author is supported by a doctoral training grant from Vilho, Yrjö and Kalle Väisälä
Foundation

\end{document}